\renewenvironment{proof}[1][\proofname]{%
	\par\pushQED{\qed}\normalfont%
	\topsep6\p@\@plus6\p@\relax
	\trivlist\item[\hskip\labelsep\bfseries#1\@addpunct{.}]%
	\ignorespaces
}{%
	\popQED\endtrivlist\@endpefalse
}
\tikzset{double line with arrow/.style args={#1,#2}{decorate,decoration={markings,%
			mark=at position 0 with {\coordinate (ta-base-1) at (0,1pt);
				\coordinate (ta-base-2) at (0,-1pt);},
			mark=at position 1 with {\d[#1] (ta-base-1) -- (0,1pt);
				\d[#2] (ta-base-2) -- (0,-1pt);
}}}}
\patchcmd{\arrowfill@}{-7mu}{-14mu}{}{}
\patchcmd{\arrowfill@}{-7mu}{-14mu}{}{}
\patchcmd{\arrowfill@}{-2mu}{-4mu}{}{}
\patchcmd{\arrowfill@}{-2mu}{-4mu}{}{}
\theoremstyle{plain}
\newtheorem{theorem}{Theorem}[section]
\newtheorem{corollary}[theorem]{Corollary}
\newtheorem{proposition}[theorem]{Proposition}
\theoremstyle{remark}
\newtheorem{remark}[theorem]{Remark}
\theoremstyle{definition}
\tikzset{double line with arrow/.style args={#1,#2}{decorate,decoration={markings,%
			mark=at position 0 with {\coordinate (ta-base-1) at (0,1pt);
				\coordinate (ta-base-2) at (0,-1pt);},
			mark=at position 1 with {\draw[#1] (ta-base-1) -- (0,1pt);
				\draw[#2] (ta-base-2) -- (0,-1pt);
}}}}
\newtheorem{theoremx}{Theorem}
\begin{document}

\title[$B_{n,m}(\mathbb{R}P^2)$ AND GENERALISED FADELL--NEUWIRTH SHORT EXACT SEQUENCE]{THE BRAID GROUP $B_{n,m}(\mathbb{R}P^2)$ AND THE SPLITTING PROBLEM OF
	THE GENERALISED FADELL--NEUWIRTH SHORT EXACT SEQUENCE}
\author{Stavroula Makri}
\address{Normandie Univ., UNICAEN, CNRS, LMNO, 14000 Caen, France}
\email{stavroula.makri@unicaen.fr}

\begin{abstract}
	Let $n,m\in \mathbb{N}$, and let $B_{n,m}(\mathbb{R}P^2)$ be the set of $(n + m)$-braids of the projective plane whose associated permutation lies in the subgroup $S_n\times S_m$ of the symmetric group $S_{n+m}$.
	We study the splitting problem of the following generalisation of the Fadell--Neuwirth short exact sequence:
	$$1\rightarrow B_m(\mathbb{R}P^2 \setminus \{x_1,\dots,x_n\})\rightarrow 
	B_{n,m}(\mathbb{R}P^2)\xrightarrow{\bar{q}} B_n(\mathbb{R}P^2)\rightarrow 1,$$
	where the map $\bar{q}$ can be considered geometrically as the epimorphism that forgets the last $m$ strands, as well as the existence of a section of the corresponding fibration $q:F_{n+m}(\mathbb{R}P^2)/S_n\times S_m\to F_{n}(\mathbb{R}P^2)/S_n$, where we denote by $F_n(\mathbb{R}P^2)$ the $n^{th}$ ordered configuration space of the projective plane $\mathbb{R}P^2$.
	
	Our main results are the following: if $n=1$ the homomorphism $\bar{q}$ and the corresponding fibration $q$ admits no section, while if $n=2$, then $\bar{q}$ and $q$ admit a section. For $n\geq 3$, we show that if $\bar{q}$ and $q$ admit a section then $m\equiv 0, (n-1)^2\ \textrm{mod}\ n(n-1)$.  Moreover, using geometric constructions, we show that the homomorphism $\bar{q}$ and the fibration $q$ admit a section for $m=kn(2n-1)(2n-2)$, where $ k\geq1$, and for $m=2n(n-1)$.
	In addition, we show that for $m\geq3$, $B_m(\mathbb{R}P^2\setminus\{x_1,\dots,x_n\})$ is not residually nilpotent and for $m\geq 5$, it is not residually solvable.
\\\\
	\noindent \textit{Keywords:} Surface braid group; Group presentation; Fadell--Neuwirth short exact sequence; Section problem, Fibration; Residually nilpotent; Residually solvable.  
\end{abstract}
\maketitle

\section{Introduction}
	Surface braid groups are both a generalisation, to any connected surface, of the fundamental group of the surface and of the braid groups of the plane, known as Artin braid groups and defined by Artin in 1925 in \cite{artin1925theorie}. They were initially introduced by Zariski in \cite{zariski1936poincare}, \cite{zariski1937topological}. During the 1960's, Fox gave an equivalent definition of surface braid groups from a topological point of view in terms of configuration spaces. Let $\Sigma$ be a connected surface and $n, m\in \mathbb{N}$. We denote the $n^{th}$ ordered configuration space of the surface $\Sigma$ by: 
	$$F_n(\Sigma)=\{(p_1,\dots,p_n)\in\Sigma^n\ | \ p_i\neq p_j \ \text{for all}\ i,j\in\{1,\dots,n\},\ i\neq j\}.$$
	Considering the natural action of the symmetric group $S_n$ on $F_n(\Sigma)$, defined by permuting coordinates, the $n^{th}$ unordered configuration space is the orbit space $F_n(\Sigma)/S_n$, and will be denoted by $UF_n(\Sigma)$. Similarly, we can consider the space obtained by quotienting the
	$(n+m)^{th}$ configuration space of $\Sigma$ by the subgroup $S_n\times S_m$ of $S_{n+m}$, that is  $UF_{n,m}(\Sigma)=F_{n+m}(\Sigma)/S_n\times S_m$. Fox--Neuwirth \cite{fox1962braid} proved that:
	$$\pi_1(F_n(\Sigma))\cong P_n(\Sigma)\ \text{and}\ \pi_1(UF_n(\Sigma))\cong B_n(\Sigma).$$
	We call $P_n(\Sigma)$ the pure braid groups of $\Sigma$ and $B_n(\Sigma)$ the full braid groups of $\Sigma$.
	Similarly, $\pi_1(UF_{n,m}(\Sigma))\cong B_{n,m}(\Sigma)$, where $B_{n,m}(\Sigma)$ is a subgroup of $B_{n+m}(\Sigma)$ known as a mixed braid group of $\Sigma$. 
	
	For $n>m$ we may define the projection $$p_{n,m}: F_n(\Sigma)\to F_m(\Sigma)\ \text{given by}\ p_{n,m}(x_1,\dots,x_m,x_{m+1},\dots,x_n) = (x_1,\dots,x_m).$$ 
	In \cite{fadell1962configuration}, Fadell--Neuwirth proved that for $n,m\in\mathbb{N}$, where $1\leq m<n$, and for any connected surface $\Sigma$ with empty boundary, the map $p_{n,m}$ 
	 is a locally-trivial fibration. The fibre over a point $(x_1,\dots,x_m)$ of the base space $F_m(\Sigma)$ may be identified with the $(n-m)^{th}$ configuration space of $\Sigma$ with $m$ punctures, $F_{n-m}(\Sigma\setminus\{x_1,\dots,x_m\})$, which we interpret as a subspace of the total space $F_n(\Sigma)$ via the injective map $i: F_{n-m}(\Sigma\setminus\{x_1,\dots,x_m\})\to F_n(\Sigma)$, defined by $i(y_1,\dots,y_{n-m})= (x_1,\dots,x_m,y_1,\dots,y_{n-m})$. This fibration gives rise to a long exact sequences of homotopy groups and in turn to the Fadell--Neuwirth pure braid group short exact sequence: 
	\begin{equation}\label{s1}
	\begin{tikzcd}
	1 \ar[r]& P_{n-m}(\Sigma\setminus\{x_1,\dots,x_m\}) \ar[r, "\bar{i}"] & P_n(\Sigma) \ar[r, "\bar{p}_{n,m}"] & P_m(\Sigma)\ar[r] & 1,
	\end{tikzcd}
	\end{equation}
	where $m\geq3$ when $\Sigma$ is the $2-$sphere $\mathbb{S}^2$, $m\geq2$ when $\Sigma$ is the projective plane $\mathbb{R}P^2$, and where $\bar{i}$ and $\bar{p}_{n,m}$ are the homomorphisms induced by the maps $i$ and $p_{n,m}$ respectively. The homomorphism $\bar{p}_{n,m}:P_n(\Sigma)\to P_m(\Sigma)$ can be considered geometrically as the map that forgets the last $n-m$ strands of the braid. Note that \eqref{s1} exists for any connected surface $\Sigma$ with or without boundary, see [\cite{guaschi2013survey}, Remark 8(d)]. The splitting problem refers to the question of whether or not the homomorphism $\bar{p}_{n,m}$ admits a section and it was a central question during the foundation and the development of the theory of surface braid groups during the 1960’s. It was
	studied notably by Fadell \cite{fadellHomotopy}, Fadell--Neuwirth \cite{fadell1962configuration}, Fadell--Van Buskirk \cite{fadell1962braid}, Van Buskirk \cite{van1966braid} and Birman \cite{https://doi.org/10.1002/cpa.3160220104}, approaching it either geometrically or algebraically. The splitting of \eqref{s1} in the case of the plane, $\Sigma=\mathbb{R}^2$, is an important result in classical braid theory. Based on this result, we have the Artin normal form for pure Artin groups $P_n$, that is $P_n\cong F_{n-1}\rtimes F_{n-2}\rtimes\dots\rtimes F_2\rtimes F_1$, where $F_n$ is the free group of rank $n$, and is one of the main tools in the study of $P_n$. The Artin normal form of $P_n$ is used, for instance, for the uniqueness of roots in $P_n$, by Bardakov \cite{BardakovOn}, for the study of the lower central series and the residual nilpotence of $P_n$, by Falk--Randell \cite{falk}, as well as for the proof of the bi-orderability of $P_n$, by Kim--Rolfsen \cite{RolfsenKim}. A complete solution to the splitting problem of \eqref{s1} was given by Gon\c{c}alves--Guaschi in \cite{gonccalves2010braid}. 
	
	In order to generalise \eqref{s1} to non-pure subgroups, we consider the map $q: UF_{n,m}(\Sigma)\to UF_n(\Sigma)$, defined by forgetting the last $m$ coordinates. 
For any connected surface $\Sigma$ without boundary, the map $q$ is a locally-trivial fibration, whose fibre can be identified with the unordered configuration space $UF_m(\Sigma\setminus\{x_1,\dots,x_n\})$. From the long exact sequence in
	homotopy of the fibration $q$ we obtain the so-called generalised Fadell--Neuwirth short exact sequence:
	\begin{equation}\label{s2}
		\begin{tikzcd}
		1 \ar[r]& B_m(\Sigma\setminus\{x_1,\dots,x_n\}) \ar[r] & B_{n,m}(\Sigma) \ar[r, "\bar{q}"] & B_n(\Sigma)\ar[r] & 1,
		\end{tikzcd}
	\end{equation}
		where $n\geq3$ when $\Sigma=\mathbb{S}^2$ and $n\geq2$ when $\Sigma=\mathbb{R}P^2$, and where $\bar{q}$ is the homomorphism induced by the map $q$. The homomorphism $\bar{q}:B_{n,m}(\Sigma)\to B_n(\Sigma)$ can be considered geometrically as the epimorphism that forgets the last $m$ strands of the braid.  Note that, as in the pure case, \eqref{s2} exists for any connected surface $\Sigma$ with or without boundary. The question of whether or not the generalised Fadell--Neuwirth short exact sequence \eqref{s2} splits does not yet have a complete solution. To the best of our knowledge, the only surface, besides the plane $\mathbb{R}^2$, for which this problem has been studied is $\Sigma=\mathbb{S}^2$, by Gon\c{c}alves--Guaschi in \cite{gonccalves2005braid} and more recently by Chen--Salter in \cite{MR4185935}. Along with the braid groups of the $2-$sphere, the braid groups of the projective plane are particularly interesting since they are the only surface braid groups that contain torsion elements \cite{fadell1962configuration}, \cite{fadell1962braid}, \cite{van1966braid}. 
		
In this work we will mainly be concerned with the case $\Sigma=\mathbb{R}P^2$:
	\begin{equation}\label{ms}
\begin{tikzcd}
1 \ar[r]& B_m(\mathbb{R}P^2\setminus\{x_1,\dots,x_n\}) \ar[r] & B_{n,m}(\mathbb{R}P^2) \ar[r, "\bar{q}"] & B_n(\mathbb{R}P^2)\ar[r] & 1,
\end{tikzcd}
\end{equation}
and the corresponding fibration $q: UF_{n,m}(\mathbb{R}P^2)\to UF_n(\mathbb{R}P^2)$, and in particular the question
of whether or not they admit a section, for $n,m\in\mathbb{N}$. 
Our two main approaches are the following: one using group-theoretic aspects via the presentations of the braid groups of \eqref{ms}, given in Section \ref{S2}, Proposition \ref{pres3} and Theorem \ref{pre4}, which will allow us to give necessary conditions for a splitting, given in Section \ref{S5}, and the other geometrical, topological, which give sufficient conditions by constructing, for certain values of $m$, two cross-sections for the fibration $q: UF_{n,m}(\Sigma)\to UF_n(\Sigma)$, given in Section \ref{S6}.

Our main results are the following:
\begin{theoremx}\label{th1}
Let $m\in\mathbb{N}$. If $n=2$, the short exact sequence \eqref{ms} splits.
\end{theoremx}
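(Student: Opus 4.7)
The plan is to construct an explicit continuous cross-section $\sigma \colon UF_2(\mathbb{R}P^2) \to UF_{2,m}(\mathbb{R}P^2)$ of the fibration $q$. Choosing compatible basepoints, the induced map on fundamental groups gives a homomorphism $s \colon B_2(\mathbb{R}P^2) \to B_{2,m}(\mathbb{R}P^2)$ satisfying $\bar q\circ s=\mathrm{id}$, splitting \eqref{ms}. So the content of the proof is to produce a continuous assignment that sends each unordered pair $\{p_1,p_2\}$ of distinct points of $\mathbb{R}P^2$ to an unordered $m$-subset of $\mathbb{R}P^2\setminus\{p_1,p_2\}$.

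Endow $\mathbb{R}P^2$ with the round metric inherited from $\mathbb{S}^2$. Two canonical continuous assignments are available on $UF_2(\mathbb{R}P^2)$: first, the unique projective line $\ell=\ell(\{p_1,p_2\})$ through $p_1$ and $p_2$, computed in homogeneous coordinates via the cross product $v_1\times v_2$ and therefore continuous in $\{p_1,p_2\}$; second, the pole $c=c(\ell)\in \mathbb{R}P^2\setminus\ell$, which is represented by the same class $[v_1\times v_2]\in\mathbb{R}P^2$ and lies at maximum distance $\pi/2$ from $\ell$.

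Write $m=2k+\varepsilon$ with $\varepsilon\in\{0,1\}$. The set $\ell\setminus\{p_1,p_2\}$ is a disjoint union of two open arcs $A_1,A_2$. On each arc $A_j$ of length $a_j$, place the $k$ points at the arc-length positions $\tfrac{i\,a_j}{k+1}$ for $i=1,\dots,k$, measured from either endpoint; this subset is invariant under the orientation-reversing involution of the arc that exchanges the two endpoints, hence is independent of the labeling of $p_1,p_2$. If $\varepsilon=1$, include additionally the pole $c$. Define $\sigma(\{p_1,p_2\})$ as the configuration consisting of $\{p_1,p_2\}$ together with these $m$ additional points.

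The verifications are then routine: the $m$ additional points are pairwise distinct and disjoint from $\{p_1,p_2\}$ (the arcs $A_1,A_2$ are disjoint in $\ell\setminus\{p_1,p_2\}$ and the pole $c$ does not lie on $\ell$); the construction depends only on the unordered pair $\{p_1,p_2\}$, since both the line $\ell$ and the pole $c$ are intrinsic to the pair and the equally-spaced subset of each arc is invariant under swapping $p_1\leftrightarrow p_2$; and $q\circ\sigma=\mathrm{id}$ is built in. The main technical point is the continuity of $\sigma$, which reduces to the continuity of $\{p_1,p_2\}\mapsto(\ell,c)$ and the continuous variation of the arc-length parametrization of the two arcs across $UF_2(\mathbb{R}P^2)$; no degeneracy occurs because $p_1\neq p_2$ throughout. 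Once $\sigma$ is produced as a continuous section, $s=\sigma_*$ supplies the desired splitting of \eqref{ms}.
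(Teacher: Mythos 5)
Your construction is correct, and it takes a genuinely different (and more uniform) geometric route than the paper. The paper lifts the pair $\{x,y\}$ to four antipodal points on $\mathbb{S}^2$, takes the great circle $C$ through them together with its poles $\pm A$, and places $n$ equally spaced points on each of the eight meridian geodesics joining $\pm A$ to the four lifted points; after projecting back to $\mathbb{R}P^2$ this yields $4n$ new points, and the residues $m\equiv 1,2,3 \bmod 4$ are handled by adjoining the projected pole $\bar A$ and/or the two midpoints of the arcs of $C$, so four congruence classes are treated separately. You instead work directly in $\mathbb{R}P^2$, exploiting the fact (special to $\mathbb{R}P^2$, and the reason the analogous statement fails on $\mathbb{S}^2$ for antipodal pairs) that any two distinct points lie on a unique projective line: you distribute $k$ equally spaced points on each of the two complementary arcs of $\ell\setminus\{p_1,p_2\}$ and adjoin the pole $[v_1\times v_2]$ when $m$ is odd, so only a parity split is needed. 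Your well-definedness and continuity checks (symmetry of the equally spaced set under swapping endpoints, disjointness of the arcs, the pole lying off $\ell$) are the right ones and they go through; the passage from a cross-section of $q$ to a splitting of the short exact sequence is the standard argument the paper also invokes. What your approach does not provide, and the paper's second argument does, is an explicit algebraic section $B_2(\mathbb{R}P^2)\to B_{2,m}(\mathbb{R}P^2)$ written in terms of generators (via embeddings of $Dic_{16}$ into dicyclic subgroups of $B_{2+m}(\mathbb{R}P^2)$), which is of independent use; but for the statement of the theorem your single geometric construction suffices.
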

\noindent
We prove this result in Section \ref{S3}, by giving an explicit algebraic and geometric section. In the same section, we will also show that for $n=1$ the homomorphism $\bar{q}:B_{n,m}(\mathbb{R}^2)\to B_n(\mathbb{R}^2)$ admits no section. 

For $n\geq3$ we have:
\begin{theoremx}\label{th2}
Let $n\geq 3$.
If the short exact sequence \eqref{ms} splits, then $m\equiv 0, (n-1)^2\ \textrm{mod}\ n(n-1)$.
\end{theoremx}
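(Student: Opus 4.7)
My plan is a torsion-and-permutation argument. The starting point is that $B_n(\mathbb{R}P^2)$ contains two classical ``rotation braids'': an element $\alpha_n$ of order $4n$ whose image in $S_n$ is an $n$-cycle, and an element $\beta_n$ of order $4(n-1)$ whose image is an $(n-1)$-cycle together with exactly one fixed point. These may be read off the presentation of $B_n(\mathbb{R}P^2)$ of Proposition~\ref{pres3}, or constructed geometrically as the braids induced by rotations of $\mathbb{R}P^2$ that fix, respectively, $0$ and $1$ point of the configuration. Assume that \eqref{ms} splits, with section $s\colon B_n(\mathbb{R}P^2)\to B_{n,m}(\mathbb{R}P^2)$. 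Since $s$ is injective, $s(\alpha_n)$ and $s(\beta_n)$ have orders $4n$ and $4(n-1)$ respectively in the ambient group $B_{n,m}(\mathbb{R}P^2)\subseteq B_{n+m}(\mathbb{R}P^2)$.

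The key input is the classification of finite cyclic subgroups of $B_N(\mathbb{R}P^2)$ due to Murasugi and refined by Gon\c{c}alves--Guaschi. For $k\geq 2$, an element of order $4k$ exists in $B_N(\mathbb{R}P^2)$ if and only if $k\mid N$ or $k\mid N-1$, the two cases being mutually exclusive; moreover any such element is conjugate to a rotation braid whose underlying permutation in $S_N$ is either a product of $N/k$ disjoint $k$-cycles with no fixed point (when $k\mid N$), or a product of $(N-1)/k$ disjoint $k$-cycles together with exactly one fixed point (when $k\mid N-1$). In particular the presence or absence of a fixed point in the permutation determines which congruence holds.

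Applied to $s(\beta_n)$ with $N=n+m$ and $k=n-1$: since $s$ is a section of $\bar{q}$, the image of $s(\beta_n)$ in $S_{n+m}$ restricts on $\{1,\dots,n\}$ to the permutation of $\beta_n$ and therefore inherits its fixed point. The classification then forces $(n-1)\mid (n+m-1)$, equivalently $(n-1)\mid m$. Applied to $s(\alpha_n)$ with $k=n$: here $\alpha_n$'s permutation is an $n$-cycle on $\{1,\dots,n\}$ with no fixed points, but a fixed point in $\{n+1,\dots,n+m\}$ is not excluded, so the classification yields $n\mid (n+m)$ or $n\mid (n+m-1)$, i.e.\ $m\equiv 0$ or $1\pmod n$.

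Combining these constraints via the Chinese Remainder Theorem (using $\gcd(n,n-1)=1$), solving ``$m\equiv 0\pmod{n-1}$ together with $m\equiv 0\pmod n$'' and ``$m\equiv 0\pmod{n-1}$ together with $m\equiv 1\pmod n$'' produces respectively the residues $m\equiv 0$ and $m\equiv (n-1)^2$ modulo $n(n-1)$, which is the desired conclusion. The main obstacle is invoking the cyclic-torsion classification in the precise form that links the order of the element to the cycle-type of its permutation; it is the rigidity ``exactly one fixed point'' in the second case that rules out the two spurious residues which a crude order-divisibility count would otherwise allow.
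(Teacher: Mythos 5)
Your argument is correct in outline, but it reaches the conclusion by a genuinely different route from the paper, and it rests on a stronger input than the paper uses. The paper splits the statement into two independent halves: the congruence $m\equiv 0,1 \pmod n$ comes from the crude torsion count (Proposition \ref{rtorsion}, using only the order statement of [Gon\c{c}alves--Guaschi, Theorem 4], exactly as in your treatment of $\alpha_n$), while the divisibility $(n-1)\mid m$ is obtained by an entirely algebraic computation: one passes to the quotient $B_{n,m}(\mathbb{R}P^2)/\Gamma_2(\beta_{n,m})$, whose kernel is the Abelianisation $\mathbb{Z}^n\times\mathbb{Z}_2$ of $\beta_{n,m}$, writes a hypothetical section on the generators $\tau_i,q_j$ with undetermined exponents, and extracts $(n-1)\mid m$ from the images of the defining relations of $B_n(\mathbb{R}P^2)$ (Proposition \ref{rquotient}). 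You instead get $(n-1)\mid m$ from torsion alone, by exploiting not just the set of orders but the conjugacy classification of torsion elements of $B_N(\mathbb{R}P^2)$ as powers of the two standard elements of orders $4N$ and $4(N-1)$, whose underlying permutations have, respectively, no fixed point and exactly one fixed point; the paper itself remarks after Proposition \ref{rtorsion} that the order-only count applied to the element of order $4(n-1)$ gives merely $m\equiv 0,1\pmod{n-1}$, and your fixed-point rigidity is precisely what removes the spurious residue. This is a legitimate and considerably shorter argument, provided you cite the conjugacy classification precisely (Murasugi's statement for $\mathbb{R}P^2$ was corrected and completed by Gon\c{c}alves--Guaschi; the paper only ever invokes the weaker order statement). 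Two small points to tighten: the cycle type of an element of order $4k$ conjugate to a power of the standard generator need not consist of $k$-cycles --- the cycles may have length $k$, $2k$ or $4k$ --- but the fixed-point dichotomy, which is all you actually use, is correct; and you should note explicitly that the permutation of $s(\beta_n)$ preserves the block $\{1,\dots,n\}$ (as $s(\beta_n)\in B_{n,m}$) and restricts there to the permutation of $\beta_n$, which is why the fixed point survives. What the paper's approach buys is independence from the conjugacy classification, at the cost of the long exponent bookkeeping; what yours buys is brevity and a conceptual explanation of why $(n-1)^2$ is the second residue.
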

\noindent
This theorem will be proved in Section \ref{S5}. In order to prove this result, we study the torsion elements of $B_n(\mathbb{R}P^2)$ as well as the splitting problem for the quotient group $B_{n,m}(\mathbb{R}P^2)/\Gamma_2(B_m(\mathbb{R}P^2\setminus\{x_1,\dots, x_n\}))$. This method has been used in the study of the splitting of the Fadell--Neuwirth pure braid group short exact sequence and for the generalised one for $\Sigma=\mathbb{S}^2$ in \cite{gonccalves2010braid} and \cite{gonccalves2005braid} respectively, and is based on the following observation: let $G$ be a group,
and let $K, H$ be normal subgroups of $G$ such that $H$ is contained in $K$. If the extension $1\rightarrow K \rightarrow G\rightarrow Q\rightarrow 1$ splits then so does the extension $1\rightarrow K/H \rightarrow G/H\rightarrow Q\rightarrow 1$. The study of the possible splitting of the second extension is sometimes easier than that of the first extension, and thus, supposing that this second extension splits, we may obtain some restrictions on the values of $m$. This method of studying the quotient $B_{n,m}(\mathbb{R}P^2)/\Gamma_2(B_m(\mathbb{R}P^2\setminus\{x_1,\dots, x_n\}))$ gives restrictions on the values of $m$. One could ask whether the study of the quotient of $B_{n,m}(\mathbb{R}P^2)$ by higher elements of the lower central or derived series give more information. For most $m$, the answer is no because the lower central and derived series become constant, due to the following theorem, proven in Section \ref{S4}. 

For $n, m\geq 1$, let
$\beta_{n,m}=B_m(\mathbb{R}P^2\setminus \{x_1,\dots,x_n\})$.
\begin{theoremx}\label{th_l}
	Let $n\geq 1$. Then:
	\begin{itemize}
		\item If $m\geq3$, then $\Gamma_2\big(\beta_{n,m}\big)=\Gamma_3\big(\beta_{n,m}\big).$ 
		\item If $m\geq5$, then $\big(\beta_{n,m}\big)^{(1)}=\big(\beta_{n,m}\big)^{(2)}.$
	\end{itemize}
	In particular, for $m\geq3$, $\beta_{n,m}$ is not residually nilpotent and for $m\geq 5$, it is not residually solvable.
\end{theoremx}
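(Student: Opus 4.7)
The plan is to show that $\beta_{n,m}/\Gamma_3(\beta_{n,m})$ is abelian for $m\geq 3$ (respectively, that $(\beta_{n,m})^{(1)}/(\beta_{n,m})^{(2)}$ is trivial for $m\geq 5$); both residual statements then follow because $\beta_{n,m}$ is non-abelian. The first step is to fix a presentation of $\beta_{n,m}$. The surface $\Sigma=\mathbb{R}P^2\setminus\{x_1,\dots,x_n\}$ is non-orientable with free fundamental group of rank $n$ (generated by one crosscap loop $a$ and loops $\rho_1,\dots,\rho_{n-1}$ around $n-1$ of the punctures), so $\beta_{n,m}$ admits a standard surface-braid presentation with Artin generators $\sigma_1,\dots,\sigma_{m-1}$ together with $a,\rho_1,\dots,\rho_{n-1}$ attached to the first strand, satisfying the usual braid relations and conjugation relations describing how $\sigma_i^{\pm 1}$ acts on the surface generators. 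Such a presentation can be extracted from Section~\ref{S2} by taking the kernel of $\bar q$.

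For the first bullet, in the nilpotent-of-class-two quotient $\beta_{n,m}/\Gamma_3(\beta_{n,m})$ every commutator is central, so applying the identity $xy=[x,y]yx$ to the braid relation $\sigma_i\sigma_{i+1}\sigma_i=\sigma_{i+1}\sigma_i\sigma_{i+1}$ collapses it to $\sigma_{i+1}\sigma_i^2=\sigma_{i+1}^2\sigma_i$, forcing $\bar\sigma_i=\bar\sigma_{i+1}$; hence all $\bar\sigma_i$ coincide. The surface-braid conjugation relations express each $\sigma_i x\sigma_i^{-1}$ as a word in the surface and Artin generators; simplifying modulo $\Gamma_3$ via the collapse $\bar\sigma_i=\bar\sigma$ and the centrality of commutators, these relations force each mixed commutator $[\sigma_i,a]$, $[\sigma_i,\rho_j]$ to lie in $\Gamma_3$, and the pure-surface relations similarly handle $[\rho_i,\rho_j]$ and $[a,\rho_j]$. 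Thus $\beta_{n,m}/\Gamma_3$ is generated by pairwise commuting elements, hence is abelian, giving $\Gamma_2(\beta_{n,m})=\Gamma_3(\beta_{n,m})$.

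For the second bullet, the same strategy is executed one level up, in the metabelian quotient $\beta_{n,m}/(\beta_{n,m})^{(2)}$. The hypothesis $m\geq 5$ enters exactly as in the classical Gorin--Lin argument for $B_m$: the availability of at least four further Artin generators is used to express $[\sigma_i,\sigma_{i+1}]$ as a commutator of commutators, placing it in the second derived subgroup; an analogous computation using the mixed conjugation relations handles the commutators involving $a$ and $\rho_j$. For the "in particular" statements, note that $\beta_{n,m}$ contains a copy of $B_3$ for $m\geq 3$ (respectively $B_5$ for $m\geq 5$) via the inclusion of an embedded disk in $\Sigma$; these are non-abelian (respectively have non-trivial commutator subgroup), so $\Gamma_2(\beta_{n,m})\neq 1$ and $(\beta_{n,m})^{(1)}\neq 1$. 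Combined with the stabilisation statements, this shows that $\bigcap_{k\geq 1}\Gamma_k(\beta_{n,m})$ and $\bigcap_{k\geq 0}(\beta_{n,m})^{(k)}$ are both non-trivial, ruling out residual nilpotence and residual solvability.

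The main obstacle I expect is the careful bookkeeping of the mixed conjugation relations in the presence of the non-orientable crosscap: the presentation relation for $a$ characteristic of a projective-plane factor involves a squared term that introduces asymmetries absent in the orientable case. Once the presentation is laid out, however, each verification reduces to a routine finite identity in a nilpotent-class-$2$ or metabelian quotient.
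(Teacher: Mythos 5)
Your proposal follows essentially the same route as the paper: pass to the quotients $\beta_{n,m}/\Gamma_3(\beta_{n,m})$ and $\beta_{n,m}/\beta_{n,m}^{(2)}$, use the braid relations together with the mixed conjugation relations (with $m\geq 3$, resp.\ $m\geq 5$, supplying enough Artin generators whose supports are disjoint from the surface generators) to show that all generators commute in these quotients, and conclude that the two series stabilise, whence the residual statements follow from the non-abelianness of $\beta_{n,m}$. The only caveat is that the step you dismiss as routine bookkeeping --- verifying that every mixed commutator actually lands in $\Gamma_3(\beta_{n,m})$, resp.\ in $\beta_{n,m}^{(2)}$ --- is exactly where the paper expends most of its effort, working term by term through the explicit presentation of Proposition \ref{pres3}.
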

In Section \ref{S6}, we conclude this work by constructing two different cross-sections $s:UF_n(\mathbb{R}P^2)\to UF_{n,m}(\mathbb{R}P^2)$, for certain values of $m$. 
\begin{theoremx}\label{th3}
Let $n\geq 3$. The fibration $$q: UF_{n,m}(\mathbb{R}P^2)\to UF_n(\mathbb{R}P^2)$$ admits a cross-section for $m=kn(2n-1)(2n-2),\ \text{where}\ k\geq1$, and for $m=2n(n-1)$.
\end{theoremx}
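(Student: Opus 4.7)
The plan is to produce, for each admissible value of $m$, an explicit continuous cross-section $s\colon UF_n(\mathbb{R}P^2)\to UF_{n,m}(\mathbb{R}P^2)$ of the fibration $q$; passing to fundamental groups then delivers the algebraic section of $\bar q$ in \eqref{ms}. The geometric backbone is the antipodal double cover $\pi\colon\mathbb{S}^2\to\mathbb{R}P^2$: an unordered configuration $\{x_1,\dots,x_n\}\in UF_n(\mathbb{R}P^2)$ lifts to the antipodally symmetric set $\tilde X:=\pi^{-1}\{x_1,\dots,x_n\}\subset\mathbb{S}^2$ of cardinality $2n$, and this lift depends continuously on the configuration. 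Conversely, any continuous, $S_n$-equivariant and antipode-equivariant assignment $\tilde X\mapsto\tilde Y\subset\mathbb{S}^2\setminus\tilde X$ with $\tilde Y$ antipodally symmetric of cardinality $2m$ descends to a cross-section of $q$. Thus the task reduces to constructing such equivariant assignments on $\mathbb{S}^2$.

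For $m=2n(n-1)$ I would use a pair-based construction. Equip $\mathbb{S}^2$ with its standard round metric; for each ordered pair $(p,q)\in\tilde X\times\tilde X$ with $q\neq\pm p$ the minimising geodesic from $p$ to $q$ is unique, and we set $\phi_\delta(p,q)$ to be its point at arclength $\delta\cdot d_{\mathbb{S}^2}(p,q)$ from $p$ for some small fixed parameter $\delta\in(0,1/2)$. The relation $\phi_\delta(-p,-q)=-\phi_\delta(p,q)$ provides antipode-equivariance, and there are exactly $2n(2n-2)=4n(n-1)=2m$ such ordered pairs. Projecting via $\pi$ yields $m=2n(n-1)$ new points in $\mathbb{R}P^2$, disjoint from $\{x_1,\dots,x_n\}$ provided $\delta$ is chosen small enough relative to the minimum interpoint distance in $\tilde X$.

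For $m=kn(2n-1)(2n-2)$ the identity $kn(2n-1)(2n-2)=k(2n-1)\cdot 2n(n-1)$ invites layering the pair-construction: choose $k(2n-1)$ distinct parameters $\delta_1<\dots<\delta_{k(2n-1)}$ in $(0,1/2)$ and take the union of the auxiliary sets produced by $\phi_{\delta_1},\dots,\phi_{\delta_{k(2n-1)}}$, giving $k(2n-1)\cdot 2n(n-1)=m$ new points in $\mathbb{R}P^2$. An equivalent combinatorial viewpoint is triple-based: the $(2n)(2n-1)(2n-2)$ ordered triples of distinct elements of $\tilde X$ form $n(2n-1)(2n-2)$ free orbits under the antipodal $\mathbb{Z}/2$-action, and one attaches $k$ canonical points in $\mathbb{R}P^2$ to each orbit using the geodesic data of the triple (for instance, by shifting a midpoint perpendicularly in a direction determined by the third vertex).

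The main obstacle is ensuring \emph{uniform} disjointness: the auxiliary set must be mutually distinct and disjoint from $\tilde X$ for every configuration, not only for generic ones. Non-generic coincidences can arise when, for instance, three lifted points lie on a common great circle or two pair-distances in $\tilde X$ coincide, and must be ruled out either by a judicious choice of the parameters $\delta_\ell$ (if necessary made to depend continuously on $\tilde X$) or by composing the construction with a canonical configuration-dependent perturbation in the normal direction. Once disjointness is established, continuity and the equivariance properties are immediate, so the assignment descends to the desired continuous section $s$, and applying $\pi_1$ yields the algebraic section of $\bar q$ promised by the theorem.
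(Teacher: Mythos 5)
Your overall framework is exactly the paper's: lift $\{y_1,\dots,y_n\}$ through the antipodal double cover to an antipodally symmetric set of $2n$ points on $\mathbb{S}^2$, build continuously and equivariantly an antipodally symmetric auxiliary set of $2m$ points, and project back. Your construction for $m=2n(n-1)$ is essentially the paper's Proposition 6.2: there, all points of $\tilde X\setminus\{-x_i\}$ are pushed along the geodesics emanating from $x_i$ into a small disc $\bar D(x_i,m)$, which is your pair map $\phi_\delta(p,q)$ in different clothing; this part is correct once $\delta$ is made a continuous symmetric function of the configuration (e.g.\ a fixed fraction of $\min_{i\neq j}d(x_i,x_j)$), which is routine.

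The case $m=kn(2n-1)(2n-2)$ is where you diverge from the paper, and where your argument has a genuine gap. Layering the pair construction over $k(2n-1)$ parameters $\delta_1,\dots,\delta_{k(2n-1)}$ does not by itself produce distinct points: if $q$ and $q'$ lie on the same geodesic ray from $p$, then $\phi_{\delta_\ell}(p,q)=\phi_{\delta_{\ell'}}(p,q')$ whenever $\delta_\ell\, d(p,q)=\delta_{\ell'}\, d(p,q')$, and since the ratio $d(p,q')/d(p,q)$ ranges over all of $(1,\infty)$ as the configuration varies, no fixed choice of parameters works globally. You name this as ``the main obstacle'' but resolve it only by gesturing at a ``judicious choice'' or a ``canonical perturbation''; the triple-based alternative is even less specified and degenerates when the three points are cocircular. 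The gap is fillable --- for instance, choose the $\delta_\ell$ to decrease geometrically at rate $\min_{i\neq j}d(x_i,x_j)/\pi$, so that the successive layers around each lifted point occupy pairwise disjoint annuli --- but as written the proof stops exactly where the work is. The paper sidesteps the whole issue with a different device: for each lifted point $x_i$ it forms the product $R_{x_i}=\prod_{j\neq k}M_{x_i,x_j,x_k}$ of M\"obius transformations, a rational map of degree $(2n-1)(2n-2)$ with $x_i$ as its unique zero and satisfying $R_{x_i}(x)=R_{-x_i}(-x)$; the fibre over a small regular value $r$ then automatically consists of $(2n-1)(2n-2)$ distinct points clustered near $x_i$, the clusters over distinct $x_i$ are separated by shrinking $r$, and the factor $k$ comes for free because fibres over distinct regular values $r,r/2,\dots,r/k$ are disjoint by definition. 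That degree argument is precisely the mechanism your proposal is missing.
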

\noindent
Applying a standard argument given in [\cite{gonccalves2004braid}, Proposition 7], it follows that the fibration $q: UF_{n,m}(\mathbb{R}P^2)\to UF_n(\mathbb{R}P^2)$ admits a cross-section if and only if the short exact sequence \eqref{ms} splits.
This justifies Corollary \ref{imply}, which states that for $n\geq 3$, the short exact sequence \eqref{ms} splits for $m=2n(n-1)$ and $m=kn(2n-1)(2n-2)$, where $k\geq1$.
Note that these values are compatible with the statement of Theorem \ref{th2}.

\section{Presentation of $B_{n,m}(\mathbb{R}P^2)$ and of certain subgroups}\label{S2}
Our convention is that, throughout this text, we read the elements of the braid groups from left to right.
We consider the real projective plane as the quotient space obtained by identifying the antipodal points on the boundary circle of the closed $2$-disc $\mathbb{D}^2$, as in Figure \ref{proj_gen}.
A presentation of the braid groups $B_n(\mathbb{R}P^2)$, of the projective plane was given by Van Buskirk:

\begin{theorem}[Van Buskirk, \cite{van1966braid}]\label{vb}
	Let $n\in \mathbb{N}$. The following constitutes a presentation of $B_n(\mathbb{R}P^2)$.
	\\
	\\\textbf{Generators:} $\sigma_1, \dots, \sigma_{n-1}, \rho_1,\dots, \rho_n$.
	\\\textbf{Relations:} \begin{enumerate}[label=(\roman*)]
		\item $\sigma_i\sigma_j=\sigma_j\sigma_i$, \ for \ $|i-j|>1$,
		\item $\sigma_i\sigma_{i+1}\sigma_i = \sigma_{i+1}\sigma_i\sigma_{i+1}$,\ for \ $1\leq i\leq n-2,$
		\item $\sigma_i\rho_j = \rho_j\sigma_i$, \ for \ $j\neq i, i+1$,
		\item $\rho_i = \sigma_i\rho_{i+1}\sigma_i $,\ for \ $1\leq i\leq n-1,$
		\item $\sigma_i^2 = \rho_{i+1}^{-1}\rho_i^{-1}\rho_{i+1}\rho_i$,\ for \ $1\leq i\leq n-1,$
		\item $\rho_1^2 =\sigma_1\sigma_2\dots\sigma_{n-2}\sigma_{n-1}^2\sigma_{n-2}\dots\sigma_2\sigma_1$.
	\end{enumerate}	
\end{theorem}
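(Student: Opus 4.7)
The plan is to follow the standard topological approach to surface braid group presentations, starting from the realisation of $\mathbb{R}P^2$ as $\mathbb{D}^2$ with antipodal boundary points identified (as in Figure~\ref{proj_gen}), and to proceed in three stages: (a) give a geometric interpretation of the proposed generators $\sigma_i,\rho_j$; (b) verify that each of the relations (i)--(vi) holds in $B_n(\mathbb{R}P^2)$ by explicit braid isotopy; and (c) show that these relations are \emph{complete}, which is the genuinely difficult step.

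For (a), fix basepoints $p_1,\ldots,p_n$ on the horizontal diameter of $\mathbb{D}^2$. Take $\sigma_i$ to be the usual positive half-twist exchanging strands $i$ and $i+1$, supported in a small sub-disk disjoint from $\partial\mathbb{D}^2$, and define $\rho_j$ as the braid in which all strands remain constant except strand $j$, which travels to $\partial\mathbb{D}^2$, re-enters at the antipodal point, and returns to $p_j$ along a prescribed arc. With these interpretations, each relation becomes a picture: (i)--(iii) are standard disjoint-support arguments together with the classical braid relation for (ii); (iv) records how $\rho_i$ is obtained from $\rho_{i+1}$ by a half-twist of strands $i$ and $i+1$ on both sides; (v) expresses the full twist $\sigma_i^2$ as the commutator $[\rho_{i+1},\rho_i]$ of small loops around the two relevant basepoints in the fibre, written in the $\rho_k$; and (vi) encodes the fact that a full loop $\rho_1^2$ along the identified boundary of $\mathbb{D}^2$ can be isotoped, via a sweep across the disk, to a loop encircling all the remaining basepoints, the right-hand side being precisely the $\mathbb{R}P^2$-analogue of the Fadell--Van Buskirk sphere relation.

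The heart of the argument is step (c). I would proceed by induction on $n$, using the Fadell--Neuwirth fibration $p_{n,n-1}\colon F_n(\mathbb{R}P^2)\to F_{n-1}(\mathbb{R}P^2)$, whose fibre is the $(n-1)$-punctured projective plane. For $n\geq 2$ this fibre is non-compact, hence has free fundamental group, of rank $n-1$ by an Euler characteristic computation; the resulting pure Fadell--Neuwirth short exact sequence~\eqref{s1}, together with an inductive presentation of $P_{n-1}(\mathbb{R}P^2)$, yields a presentation of $P_n(\mathbb{R}P^2)$. The base cases $n=1$ (where $B_1(\mathbb{R}P^2)=\pi_1(\mathbb{R}P^2)=\mathbb{Z}/2$, and (vi) reduces to $\rho_1^2=1$) and $n=2$ are handled by direct computation. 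Finally, one applies the Reidemeister--Schreier procedure to the extension $1\to P_n(\mathbb{R}P^2)\to B_n(\mathbb{R}P^2)\to S_n\to 1$, using a Schreier transversal adapted to the $\sigma_i$, and matches the resulting generators and relations with (i)--(vi).

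The main obstacle will be the bookkeeping in the Reidemeister--Schreier step. The conjugation action of the $\sigma_i$ on the fibre generators produces long words in the $\rho_j$, and one must patiently check that every such word can be reduced, modulo (i)--(v), to one already implied. It is essentially in resolving the interaction between two antipodal excursions $\rho_i$ and $\rho_j$, after being conjugated by a sequence of half-twists, that relation (vi)---the only genuinely $\mathbb{R}P^2$-specific relation, and the one responsible for the torsion in $B_n(\mathbb{R}P^2)$---emerges as the single closing relation, so I would expect the bulk of the work to be concentrated there.
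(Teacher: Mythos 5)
The paper does not prove Theorem \ref{vb}: it is imported verbatim from Van Buskirk \cite{van1966braid}, so there is no in-paper argument to compare yours against. Judged on its own terms, and against the method the paper actually uses for the neighbouring presentations (Propositions \ref{pre1}, \ref{presentationpunctures}, \ref{pres3} and Theorem \ref{pre4}), your outline is the standard and correct strategy: realise the generators geometrically, check (i)--(vi) by isotopy, obtain $P_n(\mathbb{R}P^2)$ inductively from the Fadell--Neuwirth sequence \eqref{s1}, and then pass to $B_n(\mathbb{R}P^2)$ through the extension $1\to P_n(\mathbb{R}P^2)\to B_n(\mathbb{R}P^2)\to S_n\to 1$. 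Two points deserve care. First, \eqref{s1} over $\mathbb{R}P^2$ is only exact when the base has at least two points ($\pi_2(\mathbb{R}P^2)\neq 0$ obstructs the case $F_2(\mathbb{R}P^2)\to F_1(\mathbb{R}P^2)$; indeed $P_2(\mathbb{R}P^2)\cong Q_8$ is finite and cannot be an extension of $\mathbb{Z}_2$ by $\mathbb{Z}$), so the induction must genuinely start at $n=2$ computed by hand, as you propose --- make that restriction explicit rather than incidental. Second, the final step is not Reidemeister--Schreier (which produces a presentation of a finite-index subgroup from one of the ambient group, i.e.\ the opposite direction); what you need is the presentation-of-an-extension machinery of [\cite{johnsonpresentation}, p.~139] used throughout Section \ref{S2}: choose coset representatives of the generators of $S_n$, lift its relations, add the conjugation relations on the kernel generators, and then reduce by Tietze transformations to (i)--(vi). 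With those adjustments the plan is sound, though the reduction to the six stated relations --- in particular seeing that the single surface relation (vi) closes the presentation --- is where the real labour of Van Buskirk's proof lies, as you anticipate.
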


The generators $\sigma_1, \dots, \sigma_{n-1}$ are the standard Artin generators, and for $i=1,\dots,n$, $\rho_i$ corresponds geometrically to elements of the fundamental
group of the projective plane based at the $i^{th}$ basepoint, as we see in Figure \ref{proj_gen}. Note that the elements $\sigma_i^{-1}$ and $\rho_i^{-1}$ correspond geometrically to the elements presented in Figure \ref{proj_gen} but with the arrows reversed. 
From this presentation, we observe that $B_1(\mathbb{R}P^2)=\langle\rho_1\ | \ \rho_1^2=1\rangle\cong\mathbb{Z}_2$, and from [\cite{van1966braid}, p. 87], we know that $B_2(\mathbb{R}P^2)$ is isomorphic to the dicyclic group of order 16 and in particular $B_2(\mathbb{R}P^2)=\langle \sigma_1,\rho_1\ |\ (\sigma_1\rho_1^{-1})^4=\rho_1^2=\sigma_1^2\rangle$. If $n\geq 3$, the group $B_n(\mathbb{R}P^2)$ is infinite, [\cite{van1966braid}, p. 86].

\begin{figure}[H]
	\centering
	\includegraphics[width=.5\textwidth]{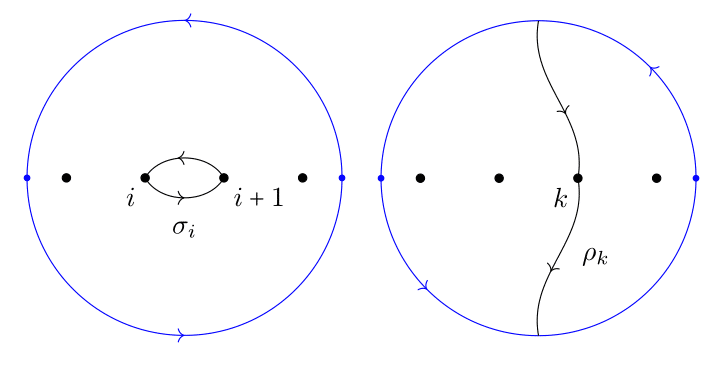}
	\caption{Generators $\sigma_i$ and $\rho_k$ of $B_n(\mathbb{R}P^2)$.}
	\label{proj_gen}
\end{figure}

The aim of this section is to give a presentation of the group $B_{n,m}(\mathbb{R}P^2)$.
To do so, we will use the short exact sequence \eqref{ms} and standard results about presentations of group extensions. Thus, we first need a presentation of $B_m(\mathbb{R}P^2\setminus\{x_1,\dots,x_n\})$, which we can obtain making use of the short exact sequence 
$$1\rightarrow P_m(\mathbb{R}P^2 \setminus \{x_1,\dots,x_n\})\rightarrow B_m(\mathbb{R}P^2\setminus \{x_1,\dots,x_n\})\rightarrow S_m\rightarrow 1,$$
but then, we first have to give a presentation of $P_m(\mathbb{R}P^2 \setminus \{x_1,\dots,x_n\})$. 

We start with the following proposition that provides a presentation of the pure braid groups $P_n(\mathbb{R}P^2)$. Note that there is already one presentation of the group $P_n(\mathbb{R}P^2)$ in \cite{gonccalves2007braid}, but we give a different one here, where our generators $\rho_i$ differ from those in \cite{gonccalves2007braid}.
\begin{proposition}\label{pre1}
	The following constitutes a presentation of $P_n(\mathbb{R}P^2)$, for $n\in\mathbb{N}$.
	\\
	\\\textbf{Generators:} $B_{i, j}, \ \text{for} \ 1\leq i<j\leq n \ \text{and} \ \rho_k, \ \text{for} \ 1\leq k\leq n$.
	\\\textbf{Relations:} \begin{enumerate}[label=(\roman*)]
		\item The Artin relations between the generators $B_{i,j}$ coming from those of $P_n$, namely for \ $1\leq i<j \leq n$ and $1\leq r<s \leq n$,
		\begin{equation*}
		B_{r,s}B_{i,j}B_{r,s}^{-1}=
		\begin{cases}
		B_{i,j}, &i<r<s<j \ \text{or} \ r<s<i<j. \\
		B_{i,j}^{-1}B_{r,j}^{-1}B_{i,j}B_{r,j}B_{i,j}, &r<i=s<j.\\
		B_{s,j}^{-1}B_{i,j}B_{s,j}, &i=r<s<j.\\
		B_{s,j}^{-1}B_{r,j}^{-1}B_{s,j}B_{r,j}B_{i,j}B_{r,j}^{-1}B_{s,j}^{-1}B_{r,j}B_{s,j}, & r<i<s<j.
		\end{cases}
		\end{equation*}

		\item $ \rho_i (B_{1,i}\dots B_{i-1,i}) = (B_{i,i+1}\dots B_{i,n})\rho_i^{-1}, \ \text{for} \ 1\leq i \leq n$, called surface relations.
		\item $ \rho_i\rho_j\rho_i^{-1} = \Big({\displaystyle \prod_{l=i+1}^{j-1} B_{l,j}}\Big)^{-1}B_{i,j}\Big({\displaystyle \prod_{l=i+1}^{j-1} B_{l,j}}\Big)\rho_j,\ \text{for} \ 1\leq i   < j \leq n.$
		\item For $1\leq i<j\leq n$,  $ 1\leq k\leq n$ and $k\neq j$,\\
		\begin{equation*}
		\rho_kB_{i, j}\rho_k^{-1}=
		\begin{cases}
		B_{i,j}, & \text{for}\ j<k \ \text{or} \ k<i .\\
		\rho_j^{-1} \Big({\displaystyle \prod_{l=k+1}^{j-1} B_{l,j}}\Big)^{-1}B_{i,j}^{-1}\Big({\displaystyle \prod_{l=k+1}^{j-1} B_{l,j}}\Big)\rho_j   & \text{for}\ k=i .\\
		A_{j,k}B_{i,j}A_{j,k}^{-1}      & \text{for}\ i<k<j,
		\end{cases}
		\end{equation*}
		where $A_{j,k}:=\rho_j^{-1} \Big({\displaystyle \prod_{l=k+1}^{j-1} B_{l,j}}\Big)^{-1}B_{k,j}^{-1}\Big({\displaystyle \prod_{l=k+1}^{j-1} B_{l,j}}\Big)\rho_j$.
	\end{enumerate}
\end{proposition}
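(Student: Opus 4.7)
The plan is to proceed by induction on $n$, using the Fadell--Neuwirth pure braid short exact sequence \eqref{s1} with $m=n-1$:
\[ 1 \to \pi_1(\mathbb{R}P^2 \setminus \{x_1,\dots,x_{n-1}\}) \to P_n(\mathbb{R}P^2) \xrightarrow{\bar{p}_{n,n-1}} P_{n-1}(\mathbb{R}P^2) \to 1, \]
which is exact for $n\geq 3$. The base cases $n=1,2$ are checked directly: for $n=1$, $P_1(\mathbb{R}P^2)\cong \mathbb{Z}_2$ is generated by $\rho_1$ and relation (ii) collapses to $\rho_1^2=1$; for $n=2$ the lone Artin generator $B_{1,2}$ together with $\rho_1,\rho_2$ and the two surface relations is easily seen to match the known description of $P_2(\mathbb{R}P^2)$.

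For the inductive step I would first record a presentation of the fiber $\pi_1(\mathbb{R}P^2 \setminus \{x_1,\dots,x_{n-1}\})$. This is the fundamental group of an open non-compact surface, hence free, and a natural set of generators is $B_{1,n},\dots,B_{n-1,n}$ (small simple loops around the deleted punctures) together with $\rho_n$ (a loop representing the generator of $\pi_1(\mathbb{R}P^2)$ based at the $n$-th basepoint), subject to the disk relation, which is exactly instance (ii) of the statement at $i=n$. I would then apply the standard procedure for presenting a group extension: take as generators of $P_n(\mathbb{R}P^2)$ the union of the fiber generators with lifts of the generators of $P_{n-1}(\mathbb{R}P^2)$ (obtained by adjoining a trivial $n$-th strand), and impose three families of relations: the relations of $P_{n-1}(\mathbb{R}P^2)$ supplied by the inductive hypothesis (giving precisely (i)--(iv) with all indices at most $n-1$), the fiber relation (ii) at $i=n$, and a conjugation relation $g x g^{-1}$ rewritten in fiber generators for every pair (lifted generator $g$, fiber generator $x$). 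These last relations are exactly the missing cases: the new instances of (i), (iii) and (iv) having $j=n$.

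The chief obstacle will be to pin down precisely the form of the conjugation relations (iii) and (iv) at $j=n$. Geometrically, pushing the representative of $\rho_k$ across the strands whose indices lie between $k$ and $n$ forces it to pass over intermediate strands and, because $\mathbb{R}P^2$ is non-orientable, to reverse the orientation of the loop it meets; these two effects produce, respectively, the correction factor $\prod_{l=k+1}^{j-1}B_{l,j}$ and the conjugation by $A_{j,k}$ rather than by a simpler word. I would establish each formula pictorially on the disk-with-antipodal-identification model of Figure~\ref{proj_gen}, decomposing the requisite homotopies into elementary moves (each contributing either a single factor $B_{l,j}$ or an application of the disk relation (ii) at $i=n$) and then reading off the resulting word. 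Once these conjugation relations are verified, assembling them with the inductive presentation of $P_{n-1}(\mathbb{R}P^2)$ and the fiber relation yields exactly the presentation claimed, closing the induction.
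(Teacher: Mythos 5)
Your overall strategy coincides with the paper's: induction on $n$, the Fadell--Neuwirth sequence with fibre the free group $\pi_1(\mathbb{R}P^2\setminus\{x_1,\dots,x_{n-1}\})$ presented on $\rho_n, B_{1,n},\dots,B_{n-1,n}$ with the single disk relation, and the standard three-family recipe for presenting a group extension, with the new conjugation relations verified pictorially. However, there is a concrete error in your description of the second family of relations. You assert that the relations of $P_{n-1}(\mathbb{R}P^2)$ lift to ``precisely (i)--(iv) with all indices at most $n-1$.'' That is false for the surface relations (ii): the relator $\rho_i(B_{1,i}\cdots B_{i-1,i})\rho_i(B_{i,i+1}\cdots B_{i,n-1})^{-1}$, which is trivial in $P_{n-1}(\mathbb{R}P^2)$, lifts to a \emph{nontrivial} element of the fibre, namely the conjugate $\bigl(\prod_{l=i+1}^{n-1}B_{l,n}\bigr)^{-1}B_{i,n}\bigl(\prod_{l=i+1}^{n-1}B_{l,n}\bigr)$, and the extension procedure requires you to set the lifted relator equal to this word, not to $1$. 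After simplification (using the identity $\bigl(\prod_{l=i+1}^{n-1}B_{l,n}\bigr)^{-1}B_{i,n}\bigl(\prod_{l=i+1}^{n-1}B_{l,n}\bigr)(B_{i,i+1}\cdots B_{i,n-1})=(B_{i,i+1}\cdots B_{i,n-1})B_{i,n}$) this is exactly how the right-hand side of (ii) acquires the extra factor $B_{i,n}$ and extends to the full product $B_{i,i+1}\cdots B_{i,n}$; this computation occupies a substantial part of the paper's argument (Figures \ref{iia} and \ref{iib}). If you impose the truncated relation $\rho_i(B_{1,i}\cdots B_{i-1,i})=(B_{i,i+1}\cdots B_{i,n-1})\rho_i^{-1}$ as written, you obtain a relation that is simply false in $P_n(\mathbb{R}P^2)$, and the resulting presentation is wrong.

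So the gap to close is: for each relation of the quotient whose naive lift is not already a consequence of the other relations, you must compute its lift as an explicit word in the fibre generators. For the Artin relations (i) and the conjugation relations (iii), (iv) with $j\leq n-1$ the lifts are indeed trivial and your claim is fine, but for (ii) it is not, and this is precisely where the inductive structure of the surface relation (the product over all $j>i$ growing with $n$) comes from. With that correction, and with the geometric verification of the $j=n$ conjugation relations that you already flag as the main technical work, your argument matches the paper's proof.
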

\begin{remark}\label{bgenerators}
	The generator $B_{i,j}$ is given by $B_{i,j} = \sigma_i^{-1}\cdots\sigma_{j-2}^{-1}\sigma_{j-1}^2\sigma_{j-2}\cdots\sigma_i$, or equivalently, by $B_{i,j} = \sigma_{j-1}\cdots\sigma_{i+1}\sigma_i^2\sigma_{i+1}^{-1}\cdots\sigma_{j-1}^{-1}$ (see Figure \ref{b}).
\end{remark}

\begin{figure}[H]
	\centering
	\includegraphics[width=.5\textwidth]{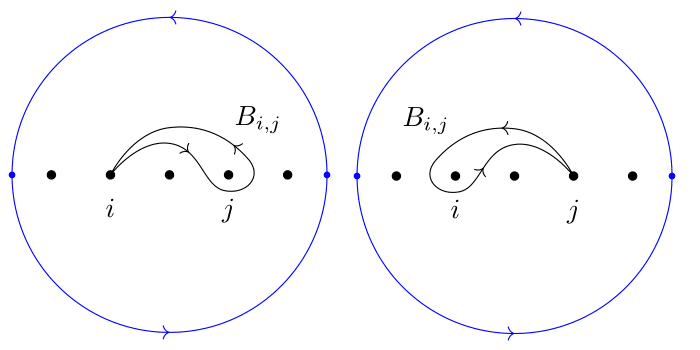}
	\caption{Equivalent illustrations of the generator $B_{i,j}$.}
	\label{b}
\end{figure}

	The products $\big({\displaystyle \prod_{l=i+1}^{j-1} B_{l,j}}\big)^{-1}B_{i,j}\big({\displaystyle \prod_{l=i+1}^{j-1} B_{l,j}}\big)$ and $\displaystyle\prod_{l = i+1}^{j}B_{i,l}$, for $i\leq j$, that appear frequently in the presentation of $P_n(\mathbb{R}P^2)$, can be seen geometrically as in Figures \ref{p1} and \ref{p2}.

\begin{figure}[H]
 	\centering
    \includegraphics[width=.6\textwidth]{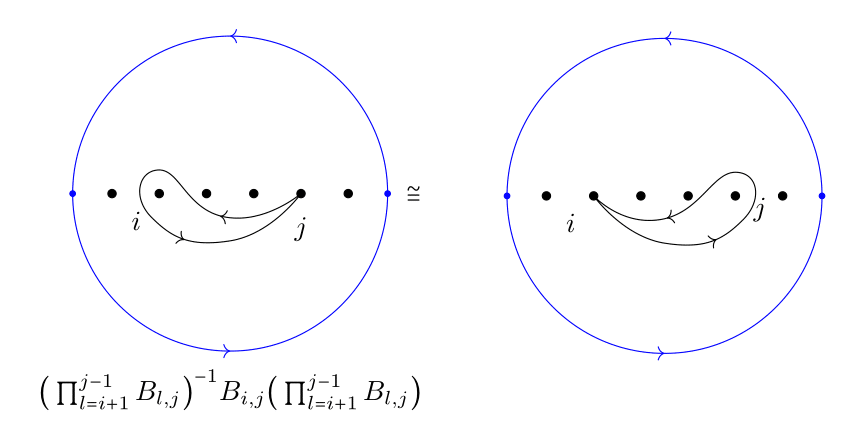}
    \caption{ A geometric presentation of the product $\big({\displaystyle \prod_{l=i+1}^{j-1} B_{l,j}}\big)^{-1}B_{i,j}\big({\displaystyle \prod_{l=i+1}^{j-1} B_{l,j}}\big)$.}
     \label{p1}
 \end{figure}
   \begin{figure}[H]
   	\centering
	\includegraphics[width=.2\textwidth]{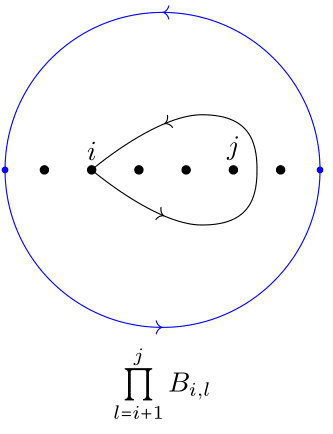}
	\caption{A geometric presentation of the product $\displaystyle\prod_{l = i+1}^{j}B_{i,l}$.}
	\label{p2}
   \end{figure}

\begin{proof}[Proof of Proposition \ref{pre1}]
	We will prove the statement by applying induction on $n\in\mathbb{N}$.
	
	For $n=1$ the given presentation yields $P_1(\mathbb{R}P^2)=\langle\rho_1\ | \ \rho_1^2=1\rangle\cong\mathbb{Z}_2$, which holds, since  $P_1(\mathbb{R}P^2)=\pi_1(\mathbb{R}P^2)\cong\mathbb{Z}_2$. Moreover, for $n=2$ we obtain that $P_2(\mathbb{R}P^2)=\langle\rho_1, \rho_2 \ |  \ \rho_1^2=\rho_2^2=(\rho_1\rho_2)^2\rangle\cong Q_8$, the quaternion group of order 8, which is indeed the case by Van Buskirk, \cite{van1966braid}. 
	
	Suppose that, for some $n\geq 2$, the group $P_n(\mathbb{R}P^2)$ has the given presentation. 
	We consider the corresponding Fadell--Neuwirth short exact sequence, where the map $\bar{p}_{n+1,n}$ can be considered geometrically as the epimorphism that forgets the last strand:	
	$$1\longrightarrow \pi_1(\mathbb{R}P^2 \setminus \{x_1,\dots, x_n\})\longrightarrow P_{n+1}(\mathbb{R}P^2)\overset{\bar{p}_{n+1,n}}{\longrightarrow}P_n(\mathbb{R}P^2) \longrightarrow 1.$$
	To retain the symmetry among relations we take the free group $\ker(\bar{p}_{n+1,n})\cong \pi_1(\mathbb{R}P^2 \setminus \{x_1,\dots, x_n\})$ to have the following presentation with $n+1$ generators and a single relation: 	
	$$\langle \rho_{n+1}, B_{1,n+1}, B_{2,n+1},\dots, B_{n,n+1} \ | \ \rho_{n+1} ( B_{1,n+1}B_{2,n+1}\cdots B_{n,n+1} ) \rho_{n+1} = 1 \rangle.$$
	The single relation of $\ker(\bar{p}_{n+1,n})\cong \pi_1(\mathbb{R}P^2 \setminus \{x_1,\dots, x_n\})$ is obtained geometrically, see Figure \ref{fn1}.
	
	In order to obtain a presentation of $P_{n+1}(\mathbb{R}P^2)$, we will apply standard techniques for obtaining presentations of group extensions as described in [\cite{johnsonpresentation}, p. 139].
	Based on these techniques, the required generating set of $P_{n+1}(\mathbb{R}P^2)$ is obtained by taking the union of the generators of $\pi_1(\mathbb{R}P^2 \setminus \{x_1,\dots, x_n\})$ together with coset representatives of the generators of $P_n(\mathbb{R}P^2)$, namely the generators $\rho_k$, for $1\leq k \leq n$ and $B_{i,j}$, for $1\leq i < j \leq n$. Thus the resulting generating set of $P_{n+1}(\mathbb{R}P^2)$ is $$\langle B_{i,j}, \rho_k \ | \ \text{for} \ 1\leq i < j \leq n+1,\ 1\leq k \leq n+1 \rangle.$$
	The relations that we obtain for the presentation of $P_{n+1}(\mathbb{R}P^2)$ arise from the following three classes of relations.
	
	The first class consists of the relations of $\ker(\bar{p}_{n+1,n})$, which in this case, it is the following single relation: $\rho_{n+1} ( B_{1,n+1}B_{2,n+1}\cdots B_{n,n+1} ) \rho_{n+1} = 1$. 
	
	The second class of relations is obtained by rewriting the relations of $P_n(\mathbb{R}P^2)$ in terms of the corresponding coset representatives in $P_{n+1}(\mathbb{R}P^2)$, and then expressing the resulting elements as a word in the generators of $\ker(\bar{p}_{n+1,n})$. In this way, one may see that all of the relations of $P_n(\mathbb{R}P^2)$ are lifted directly to relations of $P_{n+1}(\mathbb{R}P^2)$, except for the relation $$\rho_i (B_{1,i}\cdots B_{i-1,i}) = (B_{i,i+1}\cdots B_{i,n})\rho_i^{-1}, \ \text{for} \ 1\leq i \leq n,$$ which may be rewritten as $\rho_i (B_{1,i}\cdots B_{i-1,i})\rho_i(B_{i,i+1}\cdots B_{i,n})^{-1}= 1, \ \text{for} \ 1\leq i \leq n$. For the same values of $i$ we have: $$\rho_i (B_{1,i}\cdots B_{i-1,i})\rho_i(B_{i,i+1}\cdots B_{i,n})^{-1}={\displaystyle \Big(\prod_{l = i+1}^{n} B_{l,n+1}}\Big)^{-1}B_{i,n+1}\Big({\displaystyle \prod_{l=i+1}^{n} B_{l,n+1}}\Big),$$ see Figure \ref{iia}. It follows that $$\rho_i (B_{1,i}\cdots B_{i-1,i})\rho_i={\displaystyle \Big(\prod_{l = i+1}^{n} B_{l,n+1}}\Big)^{-1}B_{i,n+1}\Big({\displaystyle \prod_{l=i+1}^{n} B_{l,n+1}}\Big)(B_{i,i+1}\cdots B_{i,n}),$$ for $1\leq i \leq n$. Moreover, $${\displaystyle \Big(\prod_{l = i+1}^{n} B_{l,n+1}}\Big)^{-1}B_{i,n+1}\Big({\displaystyle \prod_{l=i+1}^{n} B_{l,n+1}}\Big)(B_{i,i+1}\cdots B_{i,n})=(B_{i,i+1}\cdots B_{i,n})B_{i,n+1},$$ see Figure \ref{iib}. Thus, $\rho_i (B_{1,i}\cdots B_{i-1,i})\rho_i=(B_{i,i+1}\cdots B_{i,n})B_{i,n+1}$ and finally we obtain the relation $\rho_i (B_{1,i}\cdots B_{i-1,i})=(B_{i,i+1}\cdots B_{i,n})B_{i,n+1}\rho_i^{-1}, \ \text{for} \ 1\leq i \leq n$. Combining this relation with the single relation of the presentation of $\ker(\bar{p}_{n+1,n})$, we obtain the following relation: $$\rho_i (B_{1,i}\cdots B_{i-1,i}) = (B_{i,i+1}\cdots B_{i,n}B_{i,n+1})\rho_i^{-1},\ \text{for}\ 1\leq i \leq n+1.$$ This is relation ($ii$) of the given presentation of $P_{n+1}(\mathbb{R}P^2)$. 
	
	The third class of relations is obtained by rewriting the conjugates of the generators of $\ker(\bar{p}_{n+1,n})$ by the coset representatives of the generators of $P_n(\mathbb{R}P^2)$ in $P_{n+1}(\mathbb{R}P^2)$ in terms of the generators of $\ker(\bar{p}_{n+1,n})$. Thus, we obtain:
	\begin{enumerate}[label=(\alph*)]
		\item The Artin relations for $B_{i,j}$. For all $1\leq i< j\leq n $ \ and \ $1\leq l \leq n$\ :
		\begin{equation*}
		B_{i,j}B_{l,n+1}B_{i,j}^{-1}=
		\begin{cases}
		B_{l,n+1}, & l<i. \\
		B_{l,n+1}, & j<l. \\
		B_{l,n+1}^{-1}B_{i,n+1}^{-1}B_{l,n+1}B_{i,n+1}B_{l,n+1}, & l=j.\\
		B_{j,n+1}^{-1}B_{l,n+1}B_{j,n+1}, & l=i.\\
		B_{j,n+1}^{-1}B_{i,n+1}^{-1}B_{j,n+1}B_{i,n+1}B_{l,n+1}B_{i,n+1}^{-1}B_{j,n+1}^{-1}B_{i,n+1}B_{j,n+1}, & i<l<j.
		\end{cases}
		\end{equation*} 
		
		We can obtain these relations geometrically or by using the presentation of $P_n$ given in \cite{hansen}, but with conjugation on the right. One can consider the generators $B_{i,j}$ to be words
		in $P_n$ and using the homomorphism $P_n\to P_n(\mathbb{R}P^2)$ induced by an inclusion
		$\mathbb{D}^2\subseteq \mathbb{R}P^2$ , these relations also hold in $P_n(\mathbb{R}P^2)$.
		\item For all $1\leq i < j \leq n$: $$B_{i,j}\rho_{n+1}B_{i,j}^{-1} = \rho_{n+1}.$$
		\item We obtain relation ($iii$) of the presentation in the case $n+1$, with $j=n+1$ in those relations.
		\item  We obtain relation ($iv$) of the presentation in the case $n+1$, with $j=n+1$ in those relations.
	\end{enumerate}
Relation (b) is clear and relations (c), (d) are obtained geometrically, as shown, for the general case, in Figure \ref{iii} and Figures \ref{ivb}, \ref{ivc}, respectively.
	
	We now see that relation ($i$) of the presentation of $P_{n+1}(\mathbb{R}P^2)$ is obtained from the lifted relation ($i$) of $P_n(\mathbb{R}P^2)$, together with relation (a) from the third class of relations. Relation ($ii$), namely the surface relations of the presentation of $P_{n+1}(\mathbb{R}P^2)$ has already been obtained by combining a relation from the second class with the single relation from the first class of relations. Relation ($iii$) is obtained from the lifted relation ($iii$) of $P_n(\mathbb{R}P^2)$, together with relation (c) from the third class of relations. Finally, relation ($iv$) of the presentation of $P_{n+1}(\mathbb{R}P^2)$ can be obtained from the lifted relation ($iv$) of $P_n(\mathbb{R}P^2)$, together with the relations (b) and (d) of the third class. 
	
	To sum up, the set of generators and the set of relations, that we have obtained, coincide with the given presentation for $P_{n+1}(\mathbb{R}P^2)$ and it follows by induction that the given presentation is valid for $P_n({\mathbb{R}P^2})$, for every $n\in \mathbb{N}$. 
\end{proof}
\begin{figure}[H]
	\centering
	\includegraphics[width=.6\textwidth]{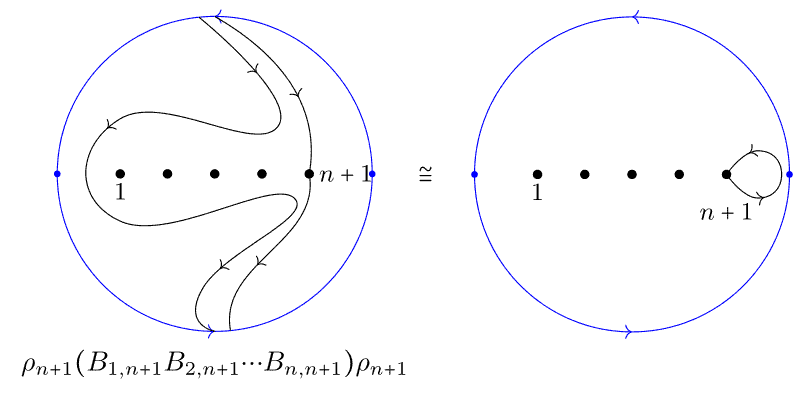}
	\caption{Relation $\rho_{n+1} ( B_{1,n+1}B_{2,n+1}\cdots B_{n,n+1} ) \rho_{n+1} = 1$.}
	\label{fn1}
\end{figure}

\begin{figure}[H]
	\centering
	\includegraphics[width=.6\textwidth]{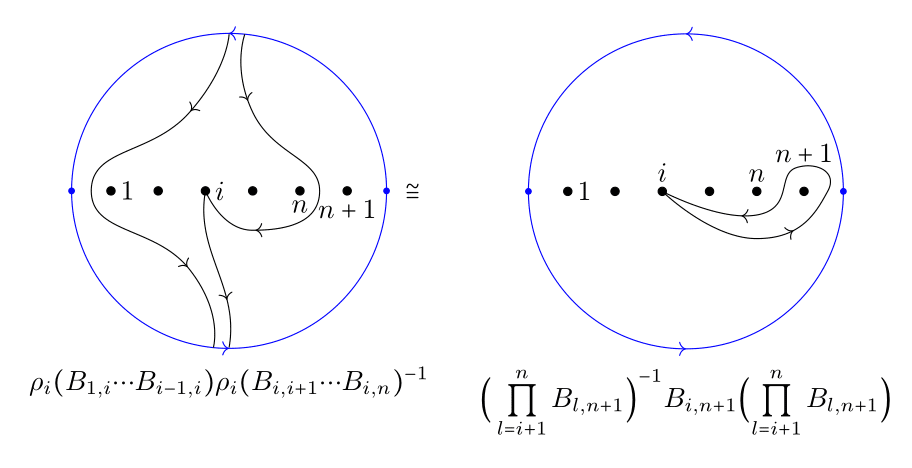}
	\caption{Relation ($iia$).}
	\label{iia}
	\end{figure}
	\begin{figure}[H]
	\centering
	\includegraphics[width=.6\textwidth]{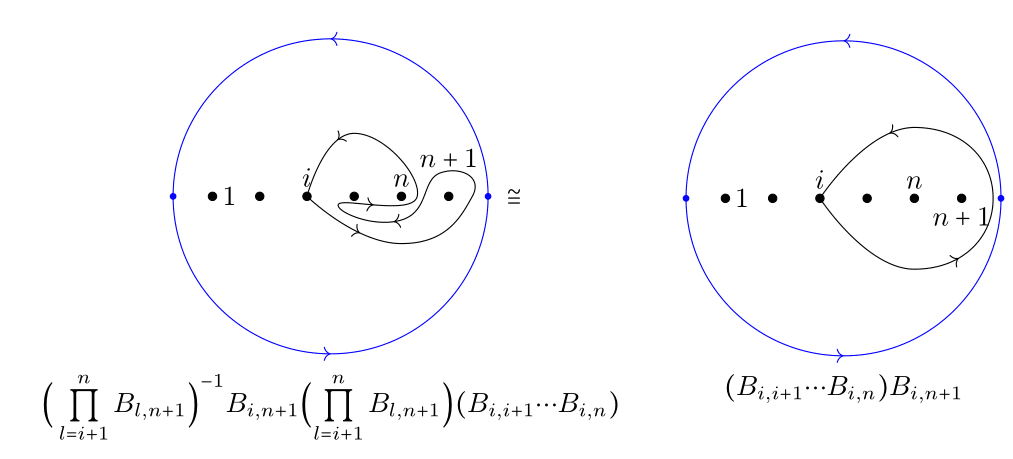}
	\caption{Relation ($iib$).}
	\label{iib}
	\end{figure}
\begin{figure}[H]
	\centering
	\includegraphics[width=1\textwidth]{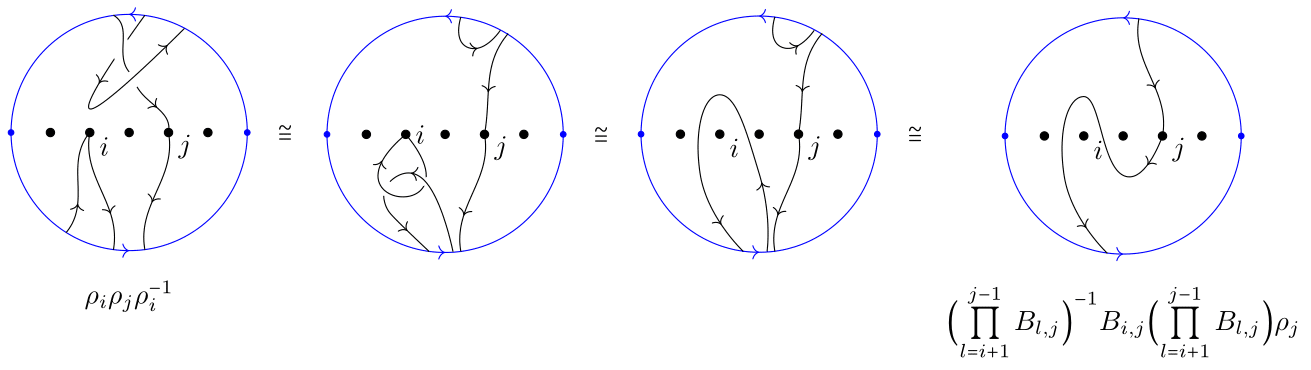}
	\caption{Relation ($iii$).}
	\label{iii}
\end{figure}

\begin{figure}[H]
	\centering
	\includegraphics[width=.7\textwidth]{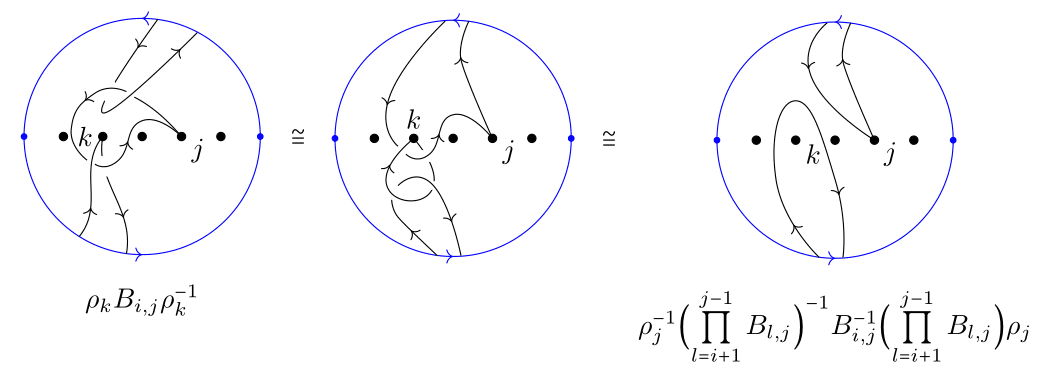}
	\caption{Relation ($iv$) for $k=i$.}
	\label{ivb}
\end{figure}

\begin{figure}[H]
	\centering
	\includegraphics[width=.7\textwidth]{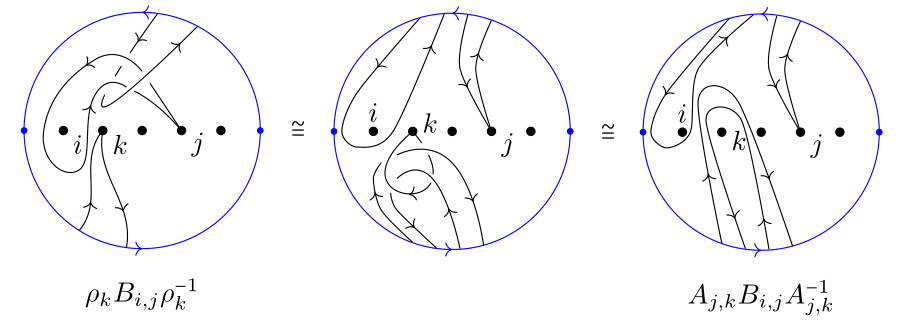}
	\caption{Relation ($iv$) for $i<k<j$.}
	\label{ivc}
\end{figure}

Using Proposition \ref{pre1} we shall obtain a presentation of the group $P_m(\mathbb{R}P^2\setminus \{x_1,\dots, x_n\})$. Note that there is also a presentation of this group in [\cite{guaschiIncl}, Proposition 11].

\begin{proposition}\label{presentationpunctures}
	For $n, m\geq 1$, the following constitutes a presentation of\\ $\Pi_{n,m}=P_m(\mathbb{R}P^2\setminus \{x_1,\dots, x_n\})$.
	\\
	\\\textbf{Generators:} $B_{i, j}, \ \text{for} \ 1\leq i< j $,  $n+1\leq j\leq n+m $ 
	and $ \rho_k, \ \text{for} \ n+1\leq k\leq n+m$.
	\\\textbf{Relations:} \begin{enumerate}[label=(\roman*)]
		\item For $1\leq i <j$ and $1\leq k<l,$ where $n+1\leq j < l \leq n+m,$
		\begin{equation*}
		B_{i,j}B_{k,l}B_{i,j}^{-1}=
		\begin{cases}
		B_{k,l}, & \text{for}\ k<i \ \text{or} \ j<k. \\
		B_{k,l}^{-1}B_{i,l}^{-1}B_{k,l}B_{i,l}B_{k,l}, & \text{for}\ i<k=j<l.\\
		B_{j,l}^{-1}B_{k,l}B_{j,l}, & \text{for} \ k=i<j<l.\\
		B_{j,l}^{-1}B_{i,l}^{-1}B_{j,l}B_{i,l}B_{k,l}B_{i,l}^{-1}B_{j,l}^{-1}B_{i,l}B_{j,l}, & \text{for}\ i<k<j<l.
		\end{cases}
		\end{equation*}

		\item For \ $n+1\leq k \leq n+m$,  \\
		$\rho_k (B_{1,k}\cdots B_{k-1,k}) = (B_{k,k+1}\cdots B_{k,n+m})\rho_k^{-1}$, called surface relations.
		\item For \ $n+1\leq k < l \leq n+m$,  \\
		$ \rho_k\rho_l\rho_k^{-1} = \Big({\displaystyle \prod_{i=k+1}^{l-1} B_{i,l}}\Big)^{-1}B_{k,l}\Big({\displaystyle \prod_{i=k+1}^{l-1} B_{i,l}}\Big)\rho_l.$
		\item For $1\leq i<j$,\  $ n+1<j\leq n+m$,\ $n+1\leq k \leq n+m$ and $k\neq j$,
		\begin{equation*}
		\rho_kB_{i, j}\rho_k^{-1}=
		\begin{cases}
		B_{i,j}, & \text{for}\ j<k \ \text{or} \ k<i .\\
		\rho_j^{-1} \Big({\displaystyle \prod_{l=k+1}^{j-1} B_{l,j}}\Big)^{-1}B_{i,j}^{-1}\Big({\displaystyle \prod_{l=k+1}^{j-1} B_{l,j}}\Big)\rho_j   & \text{for}\ k=i .\\
		A_{j,k}B_{i,j}A_{j,k}^{-1}      & \text{for}\ i < k<j,
		\end{cases}
		\end{equation*}		
		where $A_{j,k}:=\rho_j^{-1}\Big({\displaystyle \prod_{l=k+1}^{j-1} B_{l,j}}\Big)^{-1}B_{k,j}^{-1}\Big({\displaystyle \prod_{l=k+1}^{j-1} B_{l,j}}\Big)\rho_j$.
	\end{enumerate}

\end{proposition}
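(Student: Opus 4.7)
The plan is to mimic the inductive argument used in the proof of Proposition \ref{pre1}, this time inducting on $m\geq 1$ with $n$ fixed and working in $\mathbb{R}P^2$ with $n$ punctures. For the base case $m=1$ one has $P_1(\mathbb{R}P^2\setminus\{x_1,\dots,x_n\})=\pi_1(\mathbb{R}P^2\setminus\{x_1,\dots,x_n\})$; relations $(i)$, $(iii)$ and $(iv)$ of the claimed presentation are vacuous because each requires two distinct indices in $\{n+1,\dots,n+m\}$, and $(ii)$ reduces to the single surface relator $\rho_{n+1}(B_{1,n+1}\cdots B_{n,n+1})\rho_{n+1}=1$. This is the standard one-relator presentation of the fundamental group of the $n$-punctured projective plane.

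For the inductive step I would fix $n\geq 1$, assume the proposition for $m-1$, and apply the Fadell--Neuwirth fibration obtained by forgetting the $(n+m)$th point. This yields the short exact sequence
$$1\to \pi_1(\mathbb{R}P^2\setminus\{x_1,\dots,x_{n+m-1}\})\to P_m(\mathbb{R}P^2\setminus\{x_1,\dots,x_n\})\to P_{m-1}(\mathbb{R}P^2\setminus\{x_1,\dots,x_n\})\to 1,$$
whose kernel admits the presentation with generators $\rho_{n+m},B_{1,n+m},\dots,B_{n+m-1,n+m}$ and single relation $\rho_{n+m}(B_{1,n+m}\cdots B_{n+m-1,n+m})\rho_{n+m}=1$. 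Johnson's technique for presentations of group extensions, applied as in the proof of Proposition \ref{pre1}, assembles the required presentation from three classes of relations: the kernel relation; the lifted relations from $P_{m-1}(\mathbb{R}P^2\setminus\{x_1,\dots,x_n\})$ (each lifting directly, except the surface relation $(ii)$ for $n+1\leq k\leq n+m-1$, which after combination with the kernel relation acquires the extra factor $B_{k,n+m}$); and the conjugation relations describing the action of coset representatives on the kernel generators.

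The conjugation relations of the form $B_{i,j}B_{l,n+m}B_{i,j}^{-1}$ produce the $l=n+m$ instances of relation $(i)$ and follow from the classical Artin relations via the homomorphism $P_{n+m}\to P_{n+m}(\mathbb{R}P^2)$ induced by an inclusion $\mathbb{D}^2\subseteq\mathbb{R}P^2$, exactly as in the proof of Proposition \ref{pre1}. The conjugation relations involving at least one $\rho$ give relations $(iii)$ and $(iv)$ with top index $n+m$, and they are read off geometrically by direct analogues of Figures \ref{iii}, \ref{ivb} and \ref{ivc}, with all indices translated into the range $\{n+1,\dots,n+m\}$. I expect the main difficulty to be purely bookkeeping: verifying that the Johnson rewriting never produces a relation lying outside those listed in $(i)$--$(iv)$, which follows from the observation that whenever the symbols appearing in a relation of Proposition \ref{pre1} all have largest index in $\{n+1,\dots,n+m\}$, every symbol in that relation already lies in the claimed generating set for $P_m(\mathbb{R}P^2\setminus\{x_1,\dots,x_n\})$.
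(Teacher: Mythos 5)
Your proposal is correct and follows essentially the same route as the paper: the paper's own proof is a two-line reference back to the method of Proposition \ref{pre1}, namely induction on $m$ via the Fadell--Neuwirth short exact sequence $1\to\Pi_{n+m,1}\to\Pi_{n,m+1}\to\Pi_{n,m}\to 1$ together with the Johnson extension technique, with the details left to the reader. Your write-up simply makes explicit the base case, the one-relator presentation of the free kernel, and the three classes of relations, all exactly as in the model proof.
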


\begin{proof}
	We proceed using the same techniques as in the proof of Proposition \ref{pre1}; applying induction on $m$, where $m\in\mathbb{N}$ and using standard results concerning the presentation of group extensions [\cite{johnsonpresentation}, p. 139], on the short exact sequence
		\begin{equation*}
	\begin{tikzcd}
	1 \arrow[r]& \Pi_{n+m,1} \arrow[r] & \Pi_{n,m+1} \arrow[r, "\bar{p}_{m+1,m}"] & \Pi_{n,m}\ar[r] & 1,
	\end{tikzcd}
	\end{equation*}
	where $\bar{p}_{m+1,m}$ can be considered geometrically as the epimorphism that forgets the last strand.
	We leave the details to the reader.
\end{proof}	

In order to give a presentation of $B_{n,m}(\mathbb{R}P^2)$, we first determine a presentation of the group $B_m(\mathbb{R}P^2\setminus \{x_1,\dots,x_n\})$.

\begin{proposition}\label{pres3}
	For $n, m\geq 1$, the following constitutes a presentation of\\
 $\beta_{n,m}=B_m(\mathbb{R}P^2\setminus \{x_1,\dots,x_n\})$.
	\\
	\\\textbf{Generators:} $B_{i, j}, \ \text{for} \ 1\leq i\leq n$, $n+1\leq j\leq n+m, $
	$ \rho_k, \ \text{for} \ n+1\leq k\leq n+m$, and $\sigma_l,$ for $1\leq l\leq m-1.$
	\\\textbf{Relations:} \begin{enumerate}[label=(\roman*)]
		\item For $1\leq i,k \leq n$ and $n+1\leq j < l \leq n+m,$ 
		\begin{equation*}
		B_{i,j}B_{k,l}B_{i,j}^{-1}=
		\begin{cases}
		B_{k,l}, & \text{for}\ k<i. \\
		B_{j,l}^{-1}B_{k,l}B_{j,l}, & \text{for} \ k=i.\\
		B_{j,l}^{-1}B_{i,l}^{-1}B_{j,l}B_{i,l}B_{k,l}B_{i,l}^{-1}B_{j,l}^{-1}B_{i,l}B_{j,l}, & \text{for}\ i<k.
		\end{cases}
		\end{equation*} 
		For $k=j$, 
	$$B_{i,j}B_{k,l}B_{i,j}^{-1}=
		\Lambda_{j,l}B_{i,l}^{-1}
		\Lambda_{j,l}^{-1}B_{i,l}\Lambda_{j,l}^{-1},$$
		where $\Lambda_{j,l}=\sigma_{l-1-n}\cdots\sigma_{j+1-n}\sigma_{j-n}^{-2}\sigma_{j+1-n}^{-1}\cdots\sigma_{l-1-n}^{-1}$.
		
		\item For $n+1\leq k \leq n+m$,  \\
		$\rho_k\Big({\displaystyle \prod_{i=1}^{n} B_{i,k}}\Big)(\sigma_{k-1-n}\cdots\sigma_2\sigma_1^2\sigma_2\cdots\sigma_{k-1-n})= (\sigma_{k-n}\cdots\sigma_{m-2}\sigma_{m-1}^2\sigma_{m-2}\cdots\sigma_{k-n}) \rho_k^{-1}.$
		\item For $n+1\leq k < j \leq n+m$,  \\
		$ \rho_k\rho_j\rho_k^{-1} = (\sigma_{j-1-n}^{-1}\cdots\sigma_{k+1-n}^{-1}\sigma_{k-n}^{2}\sigma_{k+1-n}\cdots \sigma_{j-1-n})\rho_j,$\\
		$\rho_k^{-1}\rho_j\rho_k=\rho_j(\sigma_{j-1-n}\cdots\sigma_{k+1-n}\sigma_{k-n}^{2}\sigma_{k+1-n}^{-1}\cdots \sigma_{j-1-n}^{-1}).$ 
		\item For $1\leq i\leq n$, $ n+1\leq j\leq n+m$, $n+1\leq k \leq n+m$ and $k\neq j$\\
		\begin{equation*}
		\rho_kB_{i, j}\rho_k^{-1}=
		\begin{cases}
		B_{i,j}, & \text{for}\ j<k.\\
		T_{j,k}^{-1} B_{i,j}T_{j,k}, & \text{for}\ n+1\leq k<j,
		\end{cases}
		\end{equation*}
		where $T_{j,k}:=\rho_j^{-1} (\sigma_{j-1-n}^{-1}\cdots\sigma_{k+1-n}^{-1}\sigma_{k-n}^{2}\sigma_{k+1-n}\cdots \sigma_{j-1-n})\rho_j$.
		\item For $1\leq r, s \leq m-1,$\\
		$\sigma_r\sigma_s=\sigma_s\sigma_r,$ for $|r-s|>1$.\\
		$\sigma_r\sigma_{r+1}\sigma_r=\sigma_{r+1}\sigma_r\sigma_{r+1},$ for $1\leq r< m-1.$\\
		\item For $1\leq i \leq n, \ n+1\leq j \leq n+m, \ 1\leq r \leq m-1, $ \\
		\begin{equation*}
		\sigma_rB_{i, j}\sigma_r^{-1}=
		\begin{cases}
		B_{i,j}, & \text{for}\ r\neq j-n-1, j-n.\\
		\sigma_{j-n-1}^2 B_{i,j-1}\sigma_{j-n-1}^{-2}, & \text{for} \ r=j-n-1, \ n+2\leq j\leq n+m.\\
		B_{i,j+1}, & \text{for} \ r=j-n, \ n+1\leq j \leq n+m-1.
		\end{cases}
		\end{equation*}
		\item For $1\leq r \leq m-1,$ $n+1\leq k \leq n+m,$\\
		\begin{equation*}
		\sigma_r\rho_k\sigma_r^{-1}=
		\begin{cases}
		\rho_k, & \text{for}\ r\neq k-n, k-1-n.\\
		\sigma_{k-n}^2\rho_{k+1}, & \text{for}\ r=k-n.\\
		\rho_{k-1}	\sigma_{k-1-n}^{-2}, & \text{for} \ r=k-1-n.
		\end{cases}
		\end{equation*}
		\end{enumerate}
\end{proposition}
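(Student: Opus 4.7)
The plan is to apply the same strategy used in the proofs of Propositions \ref{pre1} and \ref{presentationpunctures}, this time to the short exact sequence
$$1 \longrightarrow \Pi_{n,m} \longrightarrow \beta_{n,m} \longrightarrow S_m \longrightarrow 1,$$
together with the standard techniques for presentations of group extensions from \cite{johnsonpresentation}, p.~139. A presentation of $\Pi_{n,m}$ is already available from Proposition \ref{presentationpunctures}, and $S_m$ has the classical Coxeter presentation with generators $\bar{\sigma}_1,\dots,\bar{\sigma}_{m-1}$ and the braid relations together with $\bar{\sigma}_r^{2}=1$.

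First, I would choose for each $\bar{\sigma}_r$ the natural geometric lift $\sigma_r\in\beta_{n,m}$, namely the Artin generator that crosses the $(r+n)$-th and $(r+n+1)$-th strands. This yields a generating set consisting of the $\sigma_r$ together with all the generators of $\Pi_{n,m}$. Three classes of relations arise: (a) the relations of $\Pi_{n,m}$ lifted verbatim (these will become relations (i)--(iv) after rewriting); (b) relations obtained by rewriting each relation of $S_m$ in terms of its lift and expressing the result as a word in the kernel generators — the braid relations lift exactly, giving (v), and the torsion relation $\bar{\sigma}_r^2=1$ lifts, via Remark \ref{bgenerators}, to $\sigma_r^{2}=B_{r+n,r+n+1}$; and (c) relations expressing, for every kernel generator $g$ and every $\sigma_r$, the conjugate $\sigma_r g\sigma_r^{-1}$ as a word in kernel generators, which by a standard geometric argument (see Remark \ref{bgenerators} and Figure \ref{b}) yields (vi) for the $B_{i,j}$ with $1\le i\le n$ and (vii) for the $\rho_k$, plus analogous formulas for the $B_{i,j}$ with $n+1\le i<j$.

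The decisive simplification is then a Tietze transformation that eliminates the redundant generators $B_{i,j}$ with $n+1\le i<j\le n+m$. Indeed, Remark \ref{bgenerators} applied to the subsurface braid inside the punctures gives
$$B_{i,j}=\sigma_{j-1-n}\cdots\sigma_{i+1-n}\,\sigma_{i-n}^{2}\,\sigma_{i+1-n}^{-1}\cdots\sigma_{j-1-n}^{-1},$$
and substituting this identity throughout the lifted presentation of $\Pi_{n,m}$ collapses its relations into the forms stated: relation (i) of the statement (with the distinguished $k=j$ case, encoded by $\Lambda_{j,l}$) follows from substituting the formula into relation (i) of Proposition \ref{presentationpunctures}; the surface relation (ii) follows from the corresponding substitution in relation (ii) of Proposition \ref{presentationpunctures}, after recognising the products $B_{k,k+1}\cdots B_{k,n+m}$ and $B_{1,k}\cdots B_{k-1,k}$ (cf.\ Figures \ref{p1} and \ref{p2}) as the $\sigma$-words appearing in (ii); and relations (iii), (iv) follow analogously from (iii), (iv) of Proposition \ref{presentationpunctures}, the conjugator $T_{j,k}$ being the image of $A_{j,k}$ under the substitution.

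The main obstacle will be the purely combinatorial bookkeeping required in this last step: one must check that, after substitution, every relation of Proposition \ref{presentationpunctures} becomes either one of the listed relations (i)--(iv) or a consequence of (i)--(iv) together with (v)--(vii), and that no hidden relations are lost. In particular, the four-case formula in Proposition \ref{presentationpunctures}(i) must correctly collapse into the three-case formula of (i) when $1\le i\le n$ (the cases $k<i$ and $j<k$ merging into a single $k<i$ case), while the case $k=j$ must be checked by direct word manipulation in $B_m(\mathbb{R}P^2)$ to produce the conjugator $\Lambda_{j,l}$. Once these verifications are complete, a final round of Tietze transformations removes the redundant generators $B_{i,j}$ with $n+1\le i<j$ and leaves precisely the presentation of the statement.
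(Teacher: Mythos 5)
Your proposal is correct and follows essentially the same route as the paper: the paper's proof simply applies the standard presentation-of-extensions machinery of [\cite{johnsonpresentation}, p.~139] to the short exact sequence $1\to \Pi_{n,m}\to \beta_{n,m}\to S_m\to 1$ and leaves the details to the reader, which are exactly the details (choice of lifts $\sigma_r$, the three classes of relations, and the Tietze elimination of the redundant $B_{i,j}$ for $n+1\le i<j$ via Remark \ref{Bgenerators}) that you spell out.
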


\begin{remark}\label{Bgenerators}
	In the above relations, as a consequence of Remark \ref{bgenerators}, for $n+1\leq i< j \leq n+m$, the element $B_{i,j}$, which does not appear
	in the list of generators, should be rewritten as:
	$$B_{i,j}=\sigma_{j-1-n}\cdots\sigma_{i+1-n}\sigma_{i-n}^{2}\sigma_{i+1-n}^{-1}\cdots \sigma_{j-1-n}^{-1},$$ and in particular $B_{n+r,n+r+1}=\sigma_r^2$, for $1\leq r\leq m-1$. Moreover, for $n+1\leq k< j \leq n+m$ then ${\displaystyle \prod_{i=k}^{j-1} B_{i,j}}=\sigma_{j-1-n}\cdots\sigma_{k+1-n}\sigma_{k-n}^2\sigma_{k+1-n}\cdots\sigma_{j-1-n}.$ 
\end{remark}
\begin{proof}
	
	Once again, we proceed by applying standard results concerning the presentation of an extension, [\cite{johnsonpresentation}, p. 139], based on the following short exact sequence:	
	$$1\rightarrow \Pi_{n,m}\rightarrow \beta_{n,m}\rightarrow S_m\rightarrow 1,$$
    where $S_n$ is the symmetric group. The details are left to the reader.
\end{proof}
\begin{remark}
	Relation ($vii$) of Proposition \ref{pres3} is obtained by rewriting the conjugates of $\rho_k$, for $n+1\leq k\leq n+m$, by the coset representatives of the generators of $S_m$ in $\beta_{n,m}$, $\sigma_l$, for $1\leq l\leq m-1$, in terms of the generators of $\Pi_{n,m}$. Figure \ref{fn4} presents both the element $\sigma_{k-n}\rho_k\sigma_{k-n}^{-1}$ and $\sigma_{k-n}^2\rho_{k+1}$, and Figure \ref{fn3} presents both the element $\sigma_{k-1-n}\rho_k\sigma_{k-1-n}^{-1}$ and $\rho_{k-1}\sigma_{k-1-n}^{-2}$.
\end{remark}
\begin{figure}[H]
	\begin{subfigure}{3cm}
		\includegraphics[width=3cm]{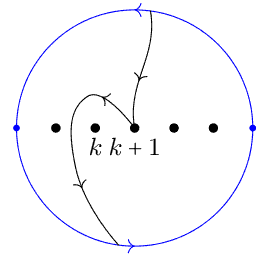}
		\caption{\quad}
		\label{fn4}
	\end{subfigure}
	\qquad
	\begin{subfigure}{3cm}
		\includegraphics[width=3cm]{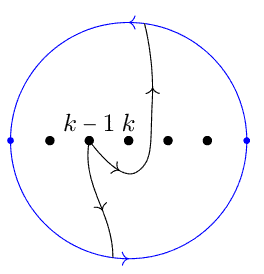}
		\caption{\quad}
		\label{fn3}
	\end{subfigure}
	\caption{From left to right illustration of the elements $\sigma_{k-n}\rho_k\sigma_{k-n}^{-1},\ \sigma_{k-n}^2\rho_{k+1}$ and $\sigma_{k-1-n}\rho_k\sigma_{k-1-n}^{-1},\ \rho_{k-1}\sigma_{k-1-n}^{-2}$.}
	\label{A}
\end{figure}

From Proposition \ref{pres3}  we are able to obtain a presentation of the Abelianisation of  $\beta_{n,m}$, which we will use in Section \ref{S5}
\begin{corollary}\label{presofquotient}
	For $n,m\geq 1$, the following constitutes a presentation of $\beta_{n,m}/\Gamma_2(\beta_{n,m})$.
	\textbf{Generators}: $\rho$, $\sigma$, $\beta_1,\dots ,\beta_n$.
	\\ \textbf{Relations}:\begin{enumerate}[label=(\roman*)]
		\item All generators commute pairwise,
		\item $\sigma^2 = 1$,
		\item $\rho^2\prod_{i=1}^{n}{\beta_i} = 1$,
	\end{enumerate}
	In particular, $$\beta_{n,m}/\Gamma_2(\beta_{n,m})\cong\mathbb{Z}^n \times \mathbb{Z}_2,$$ where $\rho$, $\beta_1,\dots ,\beta_{n-1}$ generate the $\mathbb{Z}^n$-component and $\sigma$ the $\mathbb{Z}_2$-component.
\end{corollary}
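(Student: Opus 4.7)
The plan is to abelianise the presentation of $\beta_{n,m}$ supplied by Proposition \ref{pres3}, adding to its defining relations the requirement that every pair of generators commutes, and then simplifying. Starting from the generators $B_{i,j}$ (for $1\leq i\leq n$, $n+1\leq j\leq n+m$), $\rho_k$ (for $n+1\leq k\leq n+m$) and $\sigma_l$ (for $1\leq l\leq m-1$), I would track how each of the seven families of relations collapses under abelianisation, and show that the resulting presentation has exactly the generators $\rho,\sigma,\beta_1,\dots,\beta_n$ and the three stated relations.

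First, I would reduce the generating set. The classical braid relation in (v), once abelianised, identifies all $\sigma_l$ with a single element $\sigma$. Using this, relation (vi) becomes $B_{i,j}=B_{i,j-1}$ and $B_{i,j}=B_{i,j+1}$, so all $B_{i,j}$ with a fixed first index $i$ project to a common element $\beta_i$. To collapse the $\rho_k$ I would first extract $\sigma^2=1$ from relation (iii): abelianising $\rho_k\rho_j\rho_k^{-1}=(\sigma_{j-1-n}^{-1}\cdots\sigma_{k-n}^{2}\cdots\sigma_{j-1-n})\rho_j$ gives $\rho=\sigma^2\rho$, hence $\sigma^2=1$. Feeding $\sigma^2=1$ into relation (vii) then yields $\rho_k=\rho_{k+1}$ and $\rho_k=\rho_{k-1}$, so all $\rho_k$ project to a common element $\rho$. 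This already produces the claimed generating set $\{\rho,\sigma,\beta_1,\dots,\beta_n\}$ and two of the three defining relations (commutativity and $\sigma^2=1$).

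Next, I would verify that the remaining relations (i), (ii) and (iv) either become vacuous or produce the surface relation $\rho^2\beta_1\cdots\beta_n=1$. Relation (iv) trivially abelianises to $B_{i,j}=B_{i,j}$. For relation (i), all the non-trivial conjugate expressions collapse to the identity $\beta_k=\beta_k$ once the $B_{i,l}^{\pm 1}$ cancel; the subtle case $k=j$ reduces, via Remark \ref{Bgenerators}, to the tautology $\sigma^2=\sigma^2$. The genuinely new content is in relation (ii): counting exponents in the $\sigma$-words on both sides (using that the subword from $\sigma_{k-1-n}$ down to $\sigma_1^2$ and back contributes $\sigma^{2(k-n-1)}$, and the right-hand word contributes $\sigma^{2(m-k+n)}$), abelianisation yields $\rho^2\beta_1\cdots\beta_n=\sigma^{2(m-2k+2n+1)}$, which, using $\sigma^2=1$, reduces to exactly $\rho^2\beta_1\cdots\beta_n=1$.

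At this point the abelianisation has presentation $\langle\rho,\sigma,\beta_1,\dots,\beta_n\mid\text{all commute},\ \sigma^2=1,\ \rho^2\prod_{i=1}^n\beta_i=1\rangle$. Eliminating $\beta_n$ via the surface relation and recognising $\sigma$ as generating an independent $\mathbb{Z}_2$ factor, I obtain the isomorphism $\beta_{n,m}/\Gamma_2(\beta_{n,m})\cong\mathbb{Z}^n\times\mathbb{Z}_2$ as claimed. The step I expect to require the most care is the exponent bookkeeping in relation (ii), which is tedious but routine; the other relations collapse in a straightforward manner once the order of identifications (first $\sigma_l\mapsto\sigma$, then $\sigma^2=1$ from (iii), then the $\rho_k$ and $B_{i,j}$ collapses) is respected.
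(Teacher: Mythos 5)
Your proposal is correct and follows essentially the same route as the paper, which obtains the corollary by abelianising the presentation of Proposition \ref{pres3} exactly as spelled out in Remark \ref{exp}: relations (v) and (iii) identify the $\sigma_l$ and force $\sigma^2=1$, relation (vii) collapses the $\rho_k$, relation (vi) collapses the $B_{i,j}$ to $\beta_i$, and the surface relation (ii) yields $\rho^2\prod_{i=1}^n\beta_i=1$. Your exponent bookkeeping in relations (ii) and (i) (including the $k=j$ case via Remark \ref{Bgenerators}) checks out.
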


\begin{remark}\label{exp}
Note that the generators, $\sigma_1,\dots,\sigma_{m-1}$ of $\beta_{n,m}$, due to relations ($iii$) and ($v$) of Proposition \ref{pres3} are sent to the same element $\sigma$ in $\beta_{n,m}/\Gamma_2(\beta_{n,m})$. In addition, relation ($iii$) of Proposition \ref{pres3} implies that $\sigma^2=1$ in $\beta_{n,m}/\Gamma_2(\beta_{n,m})$. The generators $\rho_k$, for $n+1\leq k\leq n+m$, of $\beta_{n,m}$, due to relation ($vii$) of Proposition \ref{pres3}, are sent to the same element $\rho$ in $\beta_{n,m}/\Gamma_2(\beta_{n,m})$. Moreover, the generators $B_{1, j},\dots,B_{n, j}$, for $n+1\leq j\leq n+m$, of $\beta_{n,m}$, due to relation ($vi$) of Proposition \ref{pres3}, are sent to the elements $\beta_1,\dots ,\beta_n$, respectively, in $\beta_{n,m}/\Gamma_2(\beta_{n,m})$.
We observe that the elements $\rho$, $\sigma$, $\beta_1,\dots ,\beta_{n-1}$ form a generating set for $\beta_{n,m}/\Gamma_2(\beta_{n,m})$, since we can obtain the element $\beta_n$ from relation ($ii$) of Proposition \ref{pres3}.
\end{remark}

Finally, we are ready to give a presentation of the group $B_{n,m}(\mathbb{R}P^2)$.

\begin{theorem}\label{pre4}
	For $n\geq 2$ and  $m\geq 1$, the following constitutes a presentation of $B_{n,m}(\mathbb{R}P^2)$.
	\\
	\\\textbf{Generators:} $B_{i, j}, \ \text{for} \ 1\leq i\leq n $ and $n+1\leq j\leq n+m, $\\ 
	$ \rho_k, \ \text{for} \  n+1\leq k\leq n+m$, \\
	$\sigma_l,$\ for \ $1\leq l\leq m-1,$\\
	$\tau_s,$\ for \ $1\leq s\leq n-1$ \ and \\
	$q_t,$\ for \ $1\leq t\leq n.$
	\\
	\\\textbf{Relations:} \begin{enumerate}[label=(\Roman*)]	
		\item The relations ($i$)-($vii$) of Proposition \ref{pres3}.
		\item For $1\leq i \leq n-1,$ $1\leq j \leq n,$\\
		$\tau_i\tau_j=\tau_i\tau_j,$ for $|i-j|>1$, \\
		$\tau_i\tau_{i+1}\tau_i=\tau_{i+1}\tau_i\tau_{i+1},$ for $1\leq i < n-1,$\\
		$\tau_iq_j=q_j\tau_i,$ for $j \neq i, i+1,$\\
		$q_i=\tau_iq_{i+1}\tau_i,$\\
		$\tau_i^2=q_{i+1}^{-1}q_{i}^{-1}q_{i+1}q_i,$\\
		$q_1^2=(\tau_1\tau_2\cdots\tau_{n-2}\tau_{n-1})(B_{n,n+1}B_{n,n+2}\cdots B_{n,n+m-1}B_{n,n+m})(\tau_{n-1}\tau_{n-2}\cdots\tau_2\tau_1).$
		\item \begin{enumerate}
			\item For $1\leq l\leq m-1$, $1\leq s\leq n-1$ and $1\leq t \leq n$,\\
			$\sigma_l\tau_s=\tau_s\sigma_l,$\\
			$\sigma_lq_t=q_t\sigma_l$.
			\item For $n+1\leq k\leq n+m$, $1\leq s\leq n-1$ and $1\leq t \leq n$,\\
			$\rho_k\tau_s=\tau_s\rho_k,$\\
			$q_t\rho_kq_t^{-1}=E_{t,k}\rho_k,$\\
			where $E_{t,k}:=N_{t,k}^{-1}B_{t,k}N_{t,k}$ and $N_{t,k}:=
			\Big({\displaystyle \prod_{l=t+1}^{n}B_{l,k}}\Big)(\sigma_{k-1-n}\cdots\sigma_1^2\cdots\sigma_{k-1-n})$.
			\item For $1\leq s\leq n-1$ and $1\leq i\leq n < j \leq n+m,$\\
			\begin{equation*}
			\tau_sB_{i,j}\tau_s^{-1}=
			\begin{cases}
			B_{i,j}, & \text{for}\ s\neq i-1,i.\\
			B_{i, j}^{-1}B_{i-1, j}B_{i, j}, & \text{for} \ s=i-1.\\
			B_{i+1, j}, & \text{for} \ s=i.
			\end{cases}
			\end{equation*}
			\item For $1\leq t\leq n$ and $1\leq i\leq n < k \leq n+m,$\\
			\begin{equation*}
			q_tB_{i,k}q_t^{-1}=
			\begin{cases}
			B_{i,k}, & \text{for}\ t<i.\\
			(N_{t,k}\rho_k)^{-1}B_{i, k}^{-1}(N_{t,k}\rho_k), & \text{for} \ t=i.\\
			(\rho_k^{-1}E_{t,k}\rho_k)^{-1}B_{i,k}(\rho_k^{-1}E_{t,k}\rho_k), & \text{for} \ i<t.
			\end{cases}
			\end{equation*}
		\end{enumerate}
	\end{enumerate}
	
\end{theorem}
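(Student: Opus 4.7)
The plan is to apply the same extension-presentation technique used in the proofs of Propositions \ref{pre1}, \ref{presentationpunctures} and \ref{pres3}, now starting from the generalised Fadell--Neuwirth short exact sequence \eqref{ms}:
\[
1 \longrightarrow \beta_{n,m} \longrightarrow B_{n,m}(\mathbb{R}P^2) \xrightarrow{\bar{q}} B_n(\mathbb{R}P^2) \longrightarrow 1.
\]
A presentation of the kernel $\beta_{n,m}$ is given by Proposition \ref{pres3}, and a presentation of the quotient $B_n(\mathbb{R}P^2)$ by Van Buskirk's Theorem \ref{vb}. By the standard techniques of [\cite{johnsonpresentation}, p.~139] it is therefore enough to: (i) take as generators all generators of $\beta_{n,m}$ together with fixed coset representatives of the generators $\sigma_1,\dots,\sigma_{n-1},\rho_1,\dots,\rho_n$ of $B_n(\mathbb{R}P^2)$, which we rename $\tau_1,\dots,\tau_{n-1}$ and $q_1,\dots,q_n$ to avoid the obvious name clash with the generators of $\beta_{n,m}$; (ii) keep every relation of $\beta_{n,m}$; (iii) rewrite each relation of $B_n(\mathbb{R}P^2)$, applied to the lifts $\tau_s,q_t$, as an element of the kernel; and (iv) express every conjugate of a generator of $\beta_{n,m}$ by some $\tau_s$ or $q_t$ as a word in the generators of $\beta_{n,m}$.

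Step (i) produces the stated generating set and step (ii) is Relation~(I). For step (iii), the Van Buskirk relations $(i)$--$(v)$ of Theorem \ref{vb} involve only the $\tau_s$ and $q_t$ and lift unchanged, yielding the first five lines of Relation~(II). The only Van Buskirk relation requiring a correction is the surface relation $\rho_1^2 = \sigma_1\cdots\sigma_{n-1}^2\cdots\sigma_1$: in the presence of the additional $m$ strands the loop realised by $q_1^2$ must also wrap around each of them. Pushing the first strand along $\tau_1\cdots\tau_{n-1}$ to position $n$, wrapping it once around the last $m$ strands (a braid equal to $B_{n,n+1}B_{n,n+2}\cdots B_{n,n+m}$ by Remark \ref{Bgenerators}), and pulling it back through $\tau_{n-1}\cdots\tau_1$ yields the last line of Relation~(II).

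The bulk of the argument lies in step (iv). Conjugation of a kernel generator $\sigma_l$, $\rho_k$, or $B_{i,k}$ by some $\tau_s$ is geometrically easy, since $\tau_s$ only swaps two adjacent base points among $x_1,\dots,x_n$ and either fixes the generator or permutes two neighbouring $B_{i,k}$'s; this yields Relation~(III.a), Relation~(III.c), and the first line of Relation~(III.b). The real obstacle is conjugation by $q_t$: geometrically, $q_t$ sends the $t^{\text{th}}$ base point once around the crosscap, so any generator $\rho_k$ or $B_{i,k}$ based at position $k$ acquires the braid that links $q_t$ with every strand lying between $t$ and $k$, which is precisely the element $N_{t,k}$ appearing in the statement. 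Verifying the identities $q_t\rho_kq_t^{-1}=E_{t,k}\rho_k$ and the three cases of $q_tB_{i,k}q_t^{-1}$ amounts to a picture-by-picture computation completely analogous to the one performed in Figures \ref{iii}--\ref{ivc} for Proposition \ref{pre1}, with the extra bookkeeping that every $B$-generator whose indices both exceed $n$ must first be rewritten via Remark \ref{Bgenerators} as a word in the $\sigma_l$'s before cancellations can be carried out. Assembling Relations~(I), (II) and the four classes of Relation~(III) then gives exactly the presentation claimed in Theorem~\ref{pre4}.
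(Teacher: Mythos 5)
Your proposal is correct and follows essentially the same route as the paper: both apply the extension-presentation technique of [\cite{johnsonpresentation}, p.~139] to the sequence $1\rightarrow \beta_{n,m}\rightarrow B_{n,m}(\mathbb{R}P^2)\xrightarrow{\bar{q}} B_n(\mathbb{R}P^2)\rightarrow 1$, obtaining the same three classes of relations, with the third class verified geometrically. The only cosmetic difference is that the paper first records the lifted surface relation as $\prod_{j=n+1}^{n+m}(B_{n,j}^{-1}\cdots B_{2,j}^{-1})B_{1,j}(B_{2,j}\cdots B_{n,j})$ and then simplifies it using Relation (III)(c), whereas you derive the final form of the $q_1^2$ relation directly (and your appeal there should be to Remark \ref{bgenerators} rather than Remark \ref{Bgenerators}).
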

Note that the generators $q_1,\dots,q_n$\ and \ $\tau_1,\dots, \tau_{n-1}$ of $B_{n,m}(\mathbb{R}P^2)$ correspond to the standard generators, $p_1,\dots,p_n$\ and \ $\sigma_1,\dots, \sigma_{n-1}$ respectively, of $B_n(\mathbb{R}P^2)$, as described in Theorem \ref{vb}.
\begin{proof}
	Once more we will apply the same methods concerning the presentation of an extension, [\cite{johnsonpresentation}, p. 139] based on the following short exact sequence:
	
	$$1\rightarrow \beta_{n,m}\rightarrow B_{n,m}(\mathbb{R}P^2)\xrightarrow{\bar{q}} B_n(\mathbb{R}P^2)\rightarrow 1,$$
	where the map $\bar{q}$ can be considered geometrically as the epimorphism that forgets the last $m$ strands.
	 We keep the same notation, and for simplicity, we denote the corresponding coset representatives in $B_{n,m}(\mathbb{R}P^2)$ by $q_1,\dots,q_n$ and $\tau_1,\dots, \tau_{n-1}$. The union of these elements together with the generators of $\beta_{n,m}$ of Proposition \ref{pres3} gives us the set of generators of $B_{n,m}(\mathbb{R}P^2)$.
	
	As before we will obtain three classes of relations in $B_{n,m}(\mathbb{R}P^2)$. The first class of relations is the set of relations of $\ker(\bar{q})$, meaning of $\beta_{n,m}$, which are relations ($I$) of the statement. 
	
	The second class of relations is obtained by rewriting the relations of $B_n(\mathbb{R}P^2)$, given in Theorem \ref{vb}, in terms of the chosen coset representatives in $B_{n,m}(\mathbb{R}P^2)$ and expressing the corresponding elements as a word in $\beta_{n,m}$. Therefore,
	for $1\leq i \leq n-1$, $1\leq j \leq n$ and $|i-j|>1$, we have
	$\tau_i\tau_j=\tau_i\tau_j,$  for $1\leq i < n-1$, we have
	$\tau_i\tau_{i+1}\tau_i=\tau_{i+1}\tau_i\tau_{i+1},$ and for $j \neq i, i+1$ we have $\tau_iq_j=q_j\tau_i.$  Moreover, we obtain
	$q_i=\tau_iq_{i+1}\tau_i$ and 
	$\tau_i^2=q_{i+1}^{-1}q_{i}^{-1}q_{i+1}q_i.$ The relation $q_1^2(\tau_{1}\cdots\tau_{n-2}\tau_{n-1}^2\tau_{n-2}\cdots\tau_1)^{-1}=1$ of $B_n(\mathbb{R}P^2)$ can be expressed as a word in $B_m(\mathbb{R}P^2 \setminus \{x_1,\dots,x_n\})$ as follows:
	\begin{equation}\label{lastrelation}
	q_1^2(\tau_{1}\cdots\tau_{n-2}\tau_{n-1}^2\tau_{n-2}\cdots\tau_1)^{-1}=\Big(\prod_{j=n+1}^{n+m}(B_{n,j}^{-1}
	\cdots B_{2,j}^{-1})B_{1,j}(B_{2,j}\cdots B_{n,j})\Big).
	\end{equation}
	Relation \eqref{lastrelation} is obtained geometrically, as shown, for $m=2$, in Figure \ref{fn2}.
	
	We will come back to this relation later, as we will use some relations that we will derive in the third class of relations, to simplify it.
	
	The third class of relations is derived from conjugating the generators of $\beta_{n,m}$ by the coset representatives $q_1,\dots,q_n$ and $\tau_1,\dots, \tau_{n-1}$. The relations of this class are obtained geometrically. 
	\begin{itemize}
		\item By conjugating the generators $\sigma_l$, for $1\leq l\leq m-1$ and by using the standard Artin relations in $B_{n,m}(\mathbb{R}P^2)$, we obtain the following relations:\\
		for all $1\leq l \leq m-1$ and $1\leq s \leq n-1$, we have $\sigma_l\tau_s=\tau_s\sigma_l.$ Moreover, we have $\sigma_lq_t=q_t\sigma_l$, for all $1\leq l \leq m-1$ and $1\leq t \leq n$. These relations correspond to relations $(III)(a)$ of the statement.
		\item By conjugating the generators $\rho_k, \ \text{for} \  n+1\leq k\leq n+m$, we have the following:\\
		for $n+1\leq k\leq n+m$, $1\leq s\leq n-1$ and $1\leq t \leq n$,\\
		$\rho_k\tau_s=\tau_s\rho_k$ and\\
		$q_t\rho_kq_t^{-1}=
		\Big(\big({\displaystyle \prod_{l=t+1}^{n}B_{l,k}}\big)(\sigma_{k-1}\cdots\sigma_1^2\cdots\sigma_{k-1})\Big)^{-1}B_{t, k}\Big(\big({\displaystyle \prod_{l=t+1}^{n}B_{l,k}}\big)(\sigma_{k-1}\cdots\sigma_1^2\cdots\sigma_{k-1})\Big)\rho_k,$
		\\ where the term ${\displaystyle \prod_{l=t+1}^{n}B_{l,k}}$ corresponds to the first $n$ points and $(\sigma_{k-1-n}\cdots\sigma_1^2\cdots\sigma_{k-1-n})$ corresponds to the following $k-n$ points.
		These relations correspond to relations $(III)(b)$ of the statement.
		\item By conjugating the generators $B_{i, j}, \ \text{for} \ 1\leq i\leq n $ and $n+1\leq j\leq n+m$ we obtain Relations $(III)(c)$ and $(III)(d)$ of the statement.
	\end{itemize}
	
	We come back to relation \eqref{lastrelation}. From Relation $(III)(c)$ and applying iterated conjugate, we obtain, for any $n+1\leq j \leq n+m$, $$(B_{n,j}^{-1}\cdots B_{2,j}^{-1})B_{1,j}(B_{2,j}\cdots B_{n,j})=(\tau_{1}\tau_2\cdots\tau_{n-2}\tau_{n-1})B_{n,j}(\tau_{1}\tau_2\cdots\tau_{n-2}\tau_{n-1})^{-1},$$which gives \begin{align*}
	&\Big(\prod_{j=n+1}^{n+m}(B_{n,j}^{-1}
	\cdots B_{2,j}^{-1})B_{1,j}(B_{2,j}\cdots B_{n,j})\Big)
	\\&=(\tau_{1}\tau_2\cdots\tau_{n-2}\tau_{n-1})(B_{n,n+1}B_{n,n+2}\cdots B_{n,n+m-1}B_{n,n+m})(\tau_{1}\tau_2\cdots\tau_{n-2}\tau_{n-1})^{-1}.
	\end{align*} 
	Thus, we obtain:
	\begin{align*}
		\Big(\prod_{j=n+1}^{n+m}(B_{n,j}^{-1}
		\cdots B_{2,j}^{-1})B_{1,j}(B_{2,j}\cdots B_{n,j})\Big)(\tau_{1}\tau_2\cdots\tau_{n-2}\tau_{n-1})&
		\\=(\tau_{1}\tau_2\cdots\tau_{n-2}\tau_{n-1})(B_{n,n+1}B_{n,n+2}\cdots B_{n,n+m-1}B_{n,n+m})
	\end{align*}
	Therefore, relation \eqref{lastrelation} becomes:
	\begin{align*}
	q_1^2&=\Big(\prod_{j=n+1}^{n+m}(B_{n,j}^{-1}
	\cdots B_{2,j}^{-1})B_{1,j}(B_{2,j}\cdots B_{n,j})\Big)(\tau_{1}\tau_2\cdots\tau_{n-2}\tau_{n-1}^2\tau_{n-2}\cdots\tau_2\tau_1)\\
	&=(\tau_{1}\tau_2\cdots\tau_{n-2}\tau_{n-1})(B_{n,n+1}B_{n,n+2}\cdots B_{n,n+m-1}B_{n,n+m})(\tau_{n-1}\tau_{n-2}\cdots\tau_2\tau_1),
	\end{align*}
	which is the last relation of relations ($II$) of the statement.
	
	As a result, we have obtained a generating set and a complete set of relations, which coincide with those given in the statement, and this completes the proof.	
\end{proof}
	\begin{figure}[H]
	\centering
	\includegraphics[width=.9\textwidth]{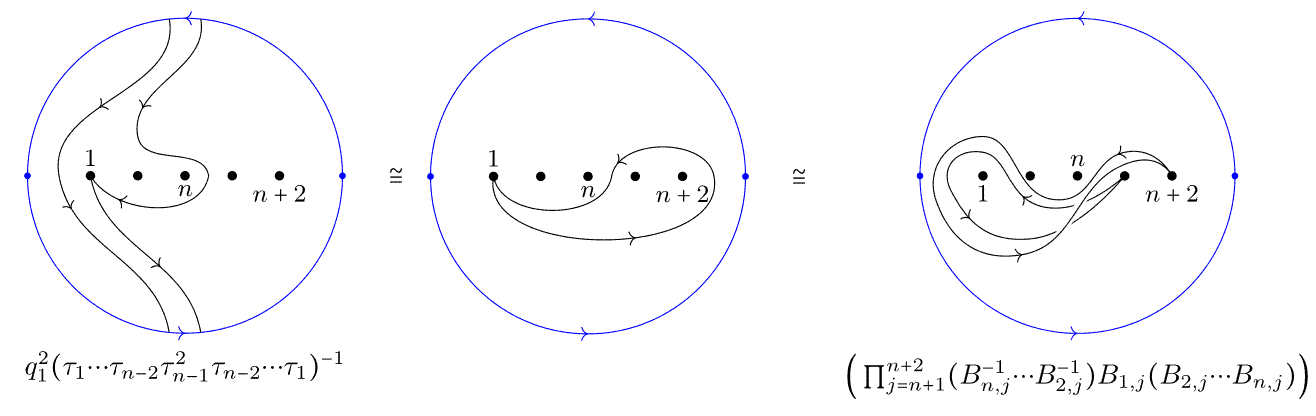}
	\caption{Relation \eqref{lastrelation} for $m=2$.}
	\label{fn2}
\end{figure}
\section{The problem of the existence of a section for the cases $n=1,2$}\label{S3}
In this section we will prove the non-existence of an algebraic section for the homomorphism $B_{1,m}(\mathbb{R}P^2) \xrightarrow{\bar{q}} B_1(\mathbb{R}P^2)$. Moreover, we will prove Theorem \ref{th1}.

\begin{proposition}\label{sectionfor1}
	Let $m\in \mathbb{N}$. The homomorphism $\bar{q}:B_{1,m}(\mathbb{R}P^2) \to B_1(\mathbb{R}P^2)$ admits no section.
\end{proposition}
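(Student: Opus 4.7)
My plan is to argue by contradiction using the standard reduction, outlined in the introduction, to the abelianisation of the kernel. Suppose $\bar{q}$ admits a section $s$. Since $B_1(\mathbb{R}P^2)=\langle\rho_1\mid\rho_1^2=1\rangle\cong\mathbb{Z}_2$, the image $s(\rho_1)$ must be an element of order $2$ lifting $\rho_1$. Composing $s$ with the canonical projection $B_{1,m}(\mathbb{R}P^2)\to B_{1,m}(\mathbb{R}P^2)/\Gamma_2(\beta_{1,m})$ yields a section of the derived extension
$$1\longrightarrow \beta_{1,m}/\Gamma_2(\beta_{1,m})\longrightarrow B_{1,m}(\mathbb{R}P^2)/\Gamma_2(\beta_{1,m})\longrightarrow B_1(\mathbb{R}P^2)\longrightarrow 1,$$
so it suffices to obstruct a splitting of this simpler extension, whose kernel is now abelian.

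By Corollary \ref{presofquotient} with $n=1$, $\beta_{1,m}/\Gamma_2(\beta_{1,m})\cong\mathbb{Z}\oplus\mathbb{Z}_2$, generated by the common class $\rho$ of the $\rho_k$ and the common class $\sigma$ of the $\sigma_l$, with the relation $\beta_1=\rho^{-2}$ eliminating the class of the $B_{1,j}$. As a lift of $\rho_1\in B_1(\mathbb{R}P^2)$, I take $q_1:=\rho_1\in B_{m+1}(\mathbb{R}P^2)$, which clearly lies in $B_{1,m}(\mathbb{R}P^2)$. The conjugation action of $q_1$ on this abelianisation can be read off inside $B_{m+1}(\mathbb{R}P^2)$: Van Buskirk's commutation relation $\sigma_i\rho_1=\rho_1\sigma_i$ for $i\geq 2$ shows that $q_1$ fixes $\sigma$, while the pure braid relation
$$\rho_1\rho_k\rho_1^{-1}=\Big(\prod_{l=2}^{k-1}B_{l,k}\Big)^{-1}B_{1,k}\Big(\prod_{l=2}^{k-1}B_{l,k}\Big)\rho_k$$
from Proposition \ref{pre1}, together with the fact that each $B_{l,k}$ with $l\geq 2$ is a product of $\sigma_i$'s with total exponent $2$ (by Remark \ref{Bgenerators}) and hence abelianises to zero, gives $q_1\rho q_1^{-1}=\beta_1\rho=\rho^{-1}$.

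Next I would compute $q_1^2$ in the abelianisation using Van Buskirk's surface relation (Theorem \ref{vb}(vi)):
$$q_1^2=\rho_1^2=\sigma_1\sigma_2\cdots\sigma_{m-1}\sigma_m^2\sigma_{m-1}\cdots\sigma_1=B_{1,2}B_{1,3}\cdots B_{1,m+1},$$
which maps to $\beta_1^m=\rho^{-2m}$. Any lift of the generator of $B_1(\mathbb{R}P^2)$ in $B_{1,m}(\mathbb{R}P^2)/\Gamma_2(\beta_{1,m})$ has the form $g=q_1\cdot\sigma^s\rho^r$, and using the action computed above,
$$g^2=q_1^2\cdot(q_1^{-1}\sigma^s\rho^r q_1)\cdot\sigma^s\rho^r=\rho^{-2m}\cdot\sigma^s\rho^{-r}\cdot\sigma^s\rho^r=\sigma^{2s}\rho^{-2m}=\rho^{-2m},$$
which is nontrivial for every $m\geq 1$ since $\rho$ has infinite order. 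This contradicts the existence of a section of the quotient extension, and hence of $\bar{q}$.

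The main technical obstacle is computing the $q_1$-action on $\beta_{1,m}/\Gamma_2(\beta_{1,m})$: Theorem \ref{pre4} presents $B_{n,m}(\mathbb{R}P^2)$ only for $n\geq 2$, so for $n=1$ I bypass it and work directly inside $B_{m+1}(\mathbb{R}P^2)$ using Van Buskirk's presentation together with the pure braid relations of Proposition \ref{pre1}. The essential simplification making this tractable is that the ``correction terms'' $B_{l,k}$ with $l\geq 2$ appearing in the conjugation formulas become trivial in the abelianisation of $\beta_{1,m}$, reducing the computation to a single contribution of $\beta_1$ that, via $\beta_1=\rho^{-2}$, turns the action into inversion on the $\mathbb{Z}$-factor.
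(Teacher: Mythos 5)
Your computations inside $B_{1+m}(\mathbb{R}P^2)$ are individually correct, and the strategy (obstructing a splitting after abelianising the kernel) is genuinely different from the paper's, which instead observes that $s(\rho_1)$ would have to be the full twist $\Delta_{1+m}^2$ --- the unique element of order two in $B_{1+m}(\mathbb{R}P^2)$ --- and that $\bar{q}(\Delta_{1+m}^2)=1$. However, your argument has a genuine gap at its very first step: the sequence \eqref{ms} is \emph{not} exact on the left when $n=1$, which is precisely why the paper states it only for $n\geq 2$. For the fibration $q:UF_{1,m}(\mathbb{R}P^2)\to\mathbb{R}P^2$ the long exact sequence reads
\[
0=\pi_2\big(UF_{1,m}(\mathbb{R}P^2)\big)\to\pi_2(\mathbb{R}P^2)\cong\mathbb{Z}\xrightarrow{\ \partial\ } B_m(\mathbb{R}P^2\setminus\{x_1\})\to B_{1,m}(\mathbb{R}P^2)\xrightarrow{\ \bar{q}\ }\mathbb{Z}_2\to 1,
\]
so $\partial$ is injective and $\ker(\bar{q})$ is a \emph{proper quotient} of $\beta_{1,m}$ by an infinite cyclic normal subgroup. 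Consequently Corollary \ref{presofquotient} computes the abelianisation of the wrong group: the extension you actually need to obstruct has kernel $K/\Gamma_2(K)$ with $K=\ker(\bar{q})$, which is a quotient of $\mathbb{Z}\times\mathbb{Z}_2$ by the image of $\partial$, and nontriviality of $\rho^{-2m}$ in $\mathbb{Z}\times\mathbb{Z}_2$ does not imply its nontriviality there.

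This is not a cosmetic issue. Already for $m=1$ one has $B_{1,1}(\mathbb{R}P^2)=P_2(\mathbb{R}P^2)\cong Q_8$ and $K=\ker(\bar{q})\cong\mathbb{Z}_4$, so $\rho$ has order $4$ in $K/\Gamma_2(K)$ rather than infinite order (your conclusion survives there only by accident, since $\rho^{-2}$ still has order $2$). For general $m$ the image of $\partial$ abelianises to an element of the form $\rho^{a}\sigma^{b}$ with $a\neq 0$ (its image under forgetting all but one strand of the fibre is $\rho^{\pm 4}\in\pi_1(\mathbb{R}P^2\setminus\{x_1\})$), and without computing $a$ you cannot exclude that $\rho^{-2m}$ becomes trivial in $K/\Gamma_2(K)$ --- indeed if $a=\pm 4$ the obstruction vanishes for all even $m$. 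To repair the argument you would have to determine $\partial$ explicitly and verify that $\rho^{-2m}$ survives in the true quotient; as written, the proof does not go through. The paper's route via the uniqueness of the order-two torsion element avoids the exactness problem entirely.
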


\begin{proof}
	Let $m\in \mathbb{N}$. We have $B_1(\mathbb{R}P^2)=\langle\rho_1\ | \ \rho_1^2=1\rangle\cong\mathbb{Z}_2$. Under a possible section $s:B_1(\mathbb{R}P^2)\rightarrow B_{1,m}(\mathbb{R}P^2)$, the only non-trivial element of $B_1(\mathbb{R}P^2)$, $\rho_1$, is of order two and thus it has to be mapped to the full twist $\Delta_{1+m}^2$ in $B_{1,m}(\mathbb{R}P^2)$, since $\Delta_{1+m}^2$ is the unique element of order two in $B_{1,m}(\mathbb{R}P^2)\subset B_{1+m}(\mathbb{R}P^2)$, see [\cite{gonccalves2004braid}, Proposition 23]. Note that the full twist $\Delta_{1+m}^2$ is a pure braid and therefore it belongs indeed to $B_{1,m}(\mathbb{R}P^2)$. The full twist $\Delta_{1+m}^2=(\sigma_1\cdots\sigma_{m})^{1+m}$ is mapped by $\bar{q}$ to the trivial element of
	$B_1(\mathbb{R}P^2)$. Thus, $(\bar{q}\circ s)(\rho_1)=\bar{q}(\Delta_{1+m}^2)=1\neq\rho_1$ and as a result, such a section $s:B_1(\mathbb{R}P^2)\rightarrow B_{1,m}(\mathbb{R}P^2)$ cannot exist.
\end{proof}

Note that Proposition \ref{sectionfor1} implies that there is no geometric section either.

\begin{proof}[Proof of Theorem \ref{th1}]
	We will provide both a geometric and an algebraic section.
	
$(i)$ Let $x,y$ be two distinct unordered points in $\mathbb{R}P^2$. We consider a two-sheeted covering map from $\mathbb{S}^2$ to $\mathbb{R}P^2$. These two points, $x,y$, lift to two pairs of distinct antipodal points $\tilde{x},-\tilde{x}$ and $\tilde{y},-\tilde{y}$ respectively on the sphere $\mathbb{S}^2$. 

 We consider the unique plane $\Pi$ that passes through $\tilde{x},-\tilde{x}, \tilde{y},-\tilde{y}$. The plane $\Pi$ intersects the sphere in a great circle $C$, which clearly contains the points $\tilde{x},-\tilde{x}, \tilde{y},-\tilde{y}$. We define $\pm A$ to be the intersection of $\mathbb{S}^2$ with the normal to the plane $\Pi$ passing through the centre of the circle $C$. Consider the geodesics $A\tilde{x}, A(-\tilde{x}), A\tilde{y}, A(-\tilde{y})$, and $-A\tilde{x}, -A(-\tilde{x}), -A\tilde{y}, -A(-\tilde{y})$. On each of these geodesics, take $n$ equally-spaced distinct new points on $\mathbb{S}^2$. We observe that these $8n$ new distinct points on $\mathbb{S}^2$ project to $4n$ new distinct points on $\mathbb{R}P^2$, since the $4n$ equally spaced distinct new points on $A\tilde{x}, A(-\tilde{x}), A\tilde{y}, A(-\tilde{y})$ are antipodal to the $4n$ equally spaced distinct new points on $-A(-\tilde{x}), -A\tilde{x}, -A(-\tilde{y}), -A\tilde{y}$. Thus based on this construction, from two distinct unordered points $x,y$ on $\mathbb{R}P^2$ we obtain $4n$ distinct unordered points on $\mathbb{R}P^2$, different from $x$ and $y$. We will denote these $4n$ distinct unordered new points on $\mathbb{R}P^2$ by $N$. This is an explicit cross-section $s:UF_2(\mathbb{R}P^2) \to UF_{2,4n}(\mathbb{R}P^2)$, for $n\in \mathbb{N}$. 
	
	Moreover, if we project the points $A, -A$ from $\mathbb{S}^2$ into $\mathbb{R}P^2$ we obtain one new point on $\mathbb{R}P^2$, which we denote by $\bar{A}$, different from $x, y$ and from the points in $N$. This yields an explicit cross-section $s:UF_2(\mathbb{R}P^2) \to UF_{2,4n+1}(\mathbb{R}P^2)$, for $n\in \mathbb{N}$. 
	
	We recall that the points $\tilde{x},-\tilde{x}, \tilde{y}, -\tilde{y}$ lie on the great circle $C$ on $\mathbb{S}^2$. These four points give rise to four geodesics; $\tilde{x}\tilde{y}$, $\tilde{y}(-\tilde{x}), -\tilde{x}(-\tilde{y}),-\tilde{y}\tilde{x}$  on $C$. Taking the midpoints of each of these four geodesics, we observe that the midpoint $M_1$ (resp. $M_2$) of the geodesic $\tilde{x}\tilde{y}$ (resp. $\tilde{y}-(\tilde{x})$) is the antipodal point of the midpoint of the geodesic $-\tilde{x}(-\tilde{y})$ (resp. $-(\tilde{y})\tilde{x}$), as we see in Figure \ref{sphere}. Projecting $M_1, -M_1, M_2, -M_2$ from $\mathbb{S}^2$ into $\mathbb{R}P^2$ we obtain two new distinct unordered points, which we denote by $M_1$ and $M_2$ for simplicity. Note that $M_1$ and $M_2$ are different from $x,y$ and from any other new point that we have already constructed. Thus the $N$ points that we have already explicitly constructed together with these two new distinct unordered points $M_1$ and $M_2$, provide an explicit cross-section $s:UF_2(\mathbb{R}P^2) \to UF_{2,4n+2}(\mathbb{R}P^2)$, for $n\in \mathbb{N}$.
	\begin{figure}[H]
		\centering
		\includegraphics[width=.3\textwidth]{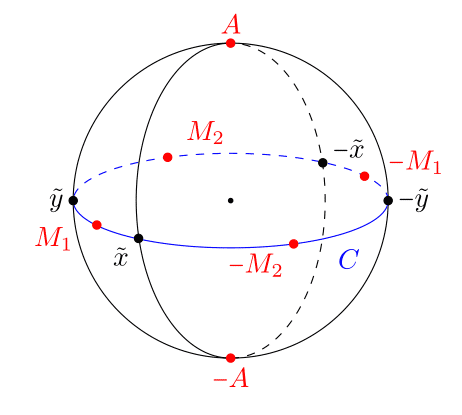}
		\caption{A geometric construction of new distinct unordered points on $\mathbb{S}^2$.}
		\label{sphere}
	\end{figure}
	Finally, if we consider the $N$ points together with the two unordered distinct points, $M_1, M_2$ and the point $\bar{A}$ on $\mathbb{R}P^2$, we obtain $4n+3$ new unordered distinct points different from $x, y$ on the projective plane. This leads to an explicit cross-section $s:UF_2(\mathbb{R}P^2) \to UF_{2,4n+3}(\mathbb{R}P^2)$, for $n\in \mathbb{N}$. 
	
	Summing up, there exists a cross-section, $s:UF_2(\mathbb{R}P^2)\to UF_{2,m}(\mathbb{R}P^2)$, for the fibration $q: UF_{2,m}(\mathbb{R}P^2)\to UF_2(\mathbb{R}P^2)$, for any $m\in \mathbb{N}$. We underline that the new points that we have obtained are independent of any choice of the representatives on $\mathbb{S}^2$ of $x, y$ and that this construction depends continuously on $x$ and $y$. 
	
$(ii)$ Let $m\in \mathbb{N}$. We will define explicitly a section $\bar{s}: B_2(\mathbb{R}P^2)\to B_{2,m}(\mathbb{R}P^2)$ for the homomorphism $\bar{q}:B_{2,m}(\mathbb{R}P^2) \to B_2(\mathbb{R}P^2)$. We will do so by considering two different cases, when $m=2k$ and when $m=2k+1$, where $k\in\mathbb{N}$. From [\cite{van1966braid}, p. 87], we know that $B_2(\mathbb{R}P^2)$ is isomorphic to the dicyclic group of order 16 and thus
	$$B_2(\mathbb{R}P^2)=\langle a_2,\
	\Delta_2\ |\ a_2^8=1,\ \Delta_2^2=a_2^4,\ 
	\Delta_2a_2\Delta_2^{-1}=a_2^{-1} \rangle\cong Dic_{16},$$
	where $a_2=\sigma_1^{-1}\rho_1$ and $\Delta_2=\sigma_1$. Note that this isomorphism follows from [\cite{gonccalves2012surface}, Proposition 15]. Before examining the two different cases, we will prove a general statement which we will use later. Let $$Dic_{8(2+2k)}=\langle x, y\ |\ x^{4(2k+2)}=1,\ y^2=x^{2(2k+2)},\ yxy^{-1}=x^{-1} \rangle,$$ then
	\begin{equation}\label{combine}
	Dic_{16}\cong\langle x^{k+1}, x^ky\ |\ x^{8(k+1)}=1,\ (x^ky)^2=x^{4(k+1)},\ (x^ky)x^{k+1}(x^ky)^{-1}=x^{-(k+1)} \rangle.
	\end{equation}
	Note that from the given presentation of $Dic_{8(2+2k)}$ it follows that $x^{k+1}$ is of order $8$. Moreover, using the relation $yxy^{-1}=x^{-1}$ we have $(x^ky)^2=x^kyx^ky^{-1}y^2=y^2$.
	From the given presentation of $Dic_{8(2+2k)}$ we also have	$x^{4(k+1)}=y^{2}$.
	Thus, $(x^ky)^2=x^{4(k+1)}$.
	Lastly, from the relation $yxy^{-1}=x^{-1}$, we deduce that	$(x^ky)x^{k+1}(x^ky)^{-1}=x^{-(k+1)}$.
	From these relations we conclude that  $\langle x^{k+1}, x^ky\rangle$ is isomorphic to a quotient of the dicyclic group of order 16. But $Dic_{8(2+2k)}=\langle x \rangle \sqcup \langle x \rangle y,$ where $x^{k+1}\in \langle x \rangle$ while $x^{k}y\in \langle x \rangle y$. Thus, $\langle x^{k+1}\rangle\cap\{x^{k}y \}=\emptyset$, which implies that $\langle x^{k+1}, x^{k}y \rangle$ contains at least $16$ distinct elements, and so $\langle x^{k+1},x^{k}y \rangle\cong Dic_{16}$, as claimed.
		
	\begin{enumerate}[leftmargin=*,label=(\alph*)]
		\item First, let $m=2k$, for $k\in \mathbb{N}$. 
		From [\cite{gonccalves2012surface}, Theorem 5], for $n\geq 2$, the group $B_n(\mathbb{R}P^2)$, contains the dicyclic group of order $8n$, $Dic_{8n}=\langle x,y\ | \ x^{4n}=1,\  x^{2n}=y^2,\ yxy^{-1}=x^{-1} \rangle$, as a subgroup. Thus, $Dic_{8(2+2k)}$ is a subgroup of $B_{2+2k}(\mathbb{R}P^2)$. From [\cite{gonccalves2012surface}, Proposition 15] we have $$Dic_{8(2+2k)}=\langle a_{2+2k},\ \Delta_{2+2k}\ |\ a_{2+2k}^{4(2k+2)}=1,\ \Delta_{2+2k}^2=a_{2+2k}^{2(2k+2)},\ 
		\Delta_{2+2k}a_{2+2k}\Delta_{2+2k}^{-1}=a_{2+2k}^{-1} \rangle,$$
		where $a_{2+2k}=\sigma_{1+2k}^{-1}\cdots\sigma_1^{-1}\rho_1$ and $\Delta_{2+2k}=(\sigma_1\sigma_2\cdots\sigma_{1+2k})(\sigma_1\sigma_2\cdots\sigma_{2k})\cdots(\sigma_1\sigma_2)\sigma_1$. 
		We consider the subgroup of $Dic_{8(2+2k)}$ generated by the elements $a_{2+2k}^{k+1}$ and $a_{2+2k}^{k}\Delta_{2+2k}$ and it follows from \eqref{combine} that $\langle a_{2+2k}^{k+1}, a_{2+2k}^{k}\Delta_{2+2k} \rangle\cong Dic_{16}$. 
		Therefore, the homomorphism 
		$$i_1:  B_2(\mathbb{R}P^2)\to B_{2+2k}(\mathbb{R}P^2)$$ defined by $$i_1(a_2)=a_{2+2k}^{k+1}\ \text{and}\ i_1(\Delta_2)=a_{2+2k}^{k}\Delta_{2+2k}$$
		is an injective homomorphism of $B_2(\mathbb{R}P^2)$ in $B_{2+2k}(\mathbb{R}P^2)$. Note that the permutation type of the elements $a_{2+2k}^{k+1}$ and $a_{2+2k}^{k}\Delta_{2+2k}$ has no transposition that corresponds to the first and second strand. 
		In order to obtain an embedding of $B_2(\mathbb{R}P^2)$ in $B_{2,2k}(\mathbb{R}P^2)$, we need to conjugate $a_{2+2k}^{k+1}$ and $a_{2+2k}^{k}\Delta_{2+2k}$ by a suitable element, which is $c:=(\sigma_2\sigma_3\cdots\sigma_{k+1})$. Thus, we define the injective homomorphism 	$$\bar{i}_1:  B_2(\mathbb{R}P^2)\to B_{2,2k}(\mathbb{R}P^2)$$ defined by $$\bar{i}_1(a_2)=c\cdot a_{2+2k}^{k+1}\cdot c^{-1}\ \text{and}\ \bar{i}_1(\Delta_2)=c\cdot a_{2+2k}^{k}\Delta_{2+2k}\cdot c^{-1}.$$
		One can easily verify geometrically that $(\bar{q}\circ\bar{i}_1)(a_2)=a_2\in B_2(\mathbb{R}P^2)$ and $(\bar{q}\circ\bar{i}_1)(\Delta_2)=\Delta_2\in B_2(\mathbb{R}P^2)$, where $\bar{q}$ forgets the last $2k$ strands, see Figures \ref{i1a} and \ref{i1d}. Summing up, the homomorphism $\bar{i}_1$ is a section for the homomorphism $\bar{q}: B_{2,2k}(\mathbb{R}P^2)\to B_2(\mathbb{R}P^2)$. 
		
		\begin{figure}[H]
			\centering
			\includegraphics[width=.8\textwidth]{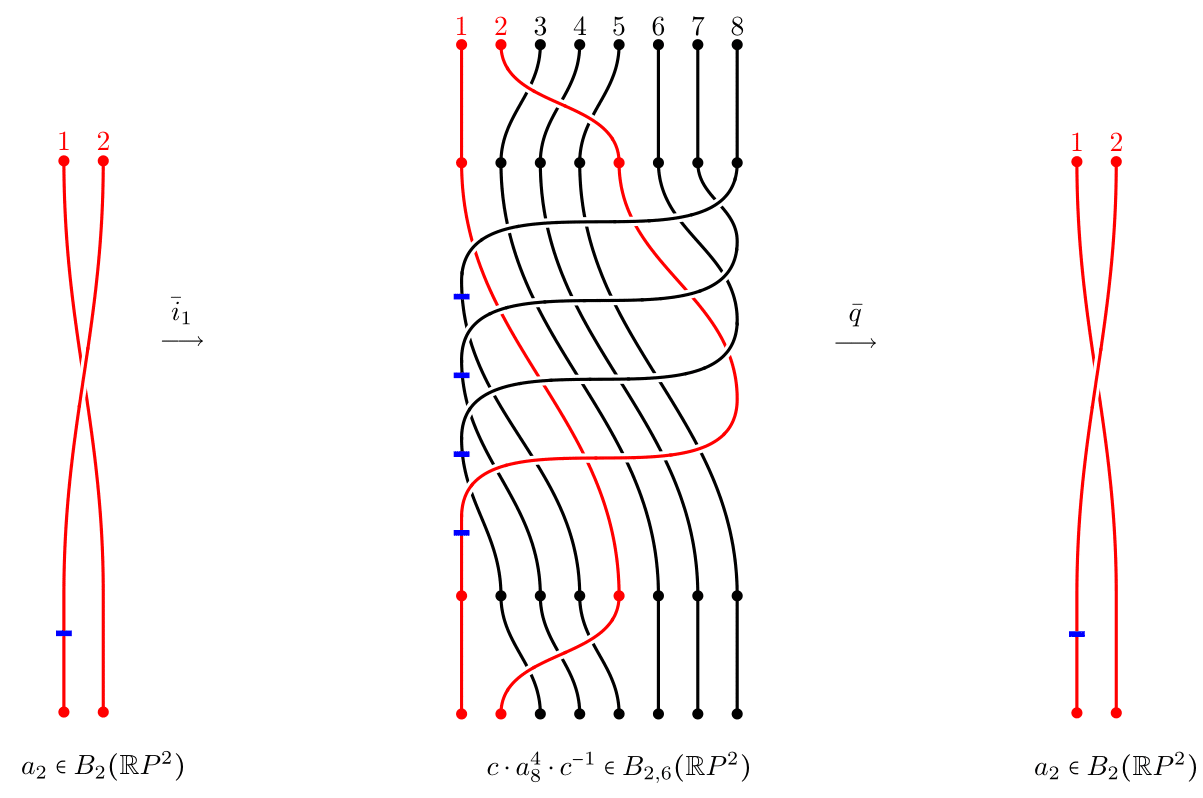}
			\caption{The image of $a_2=\sigma^{-1}_1\rho_1$ under the map $\bar{i}_1$, for $k=3$. Note that with the blue dash we illustrate the element $\rho_1$.}
			\label{i1a}
		\end{figure}
		\begin{figure}[H]
			\centering
			\includegraphics[width=.8\textwidth]{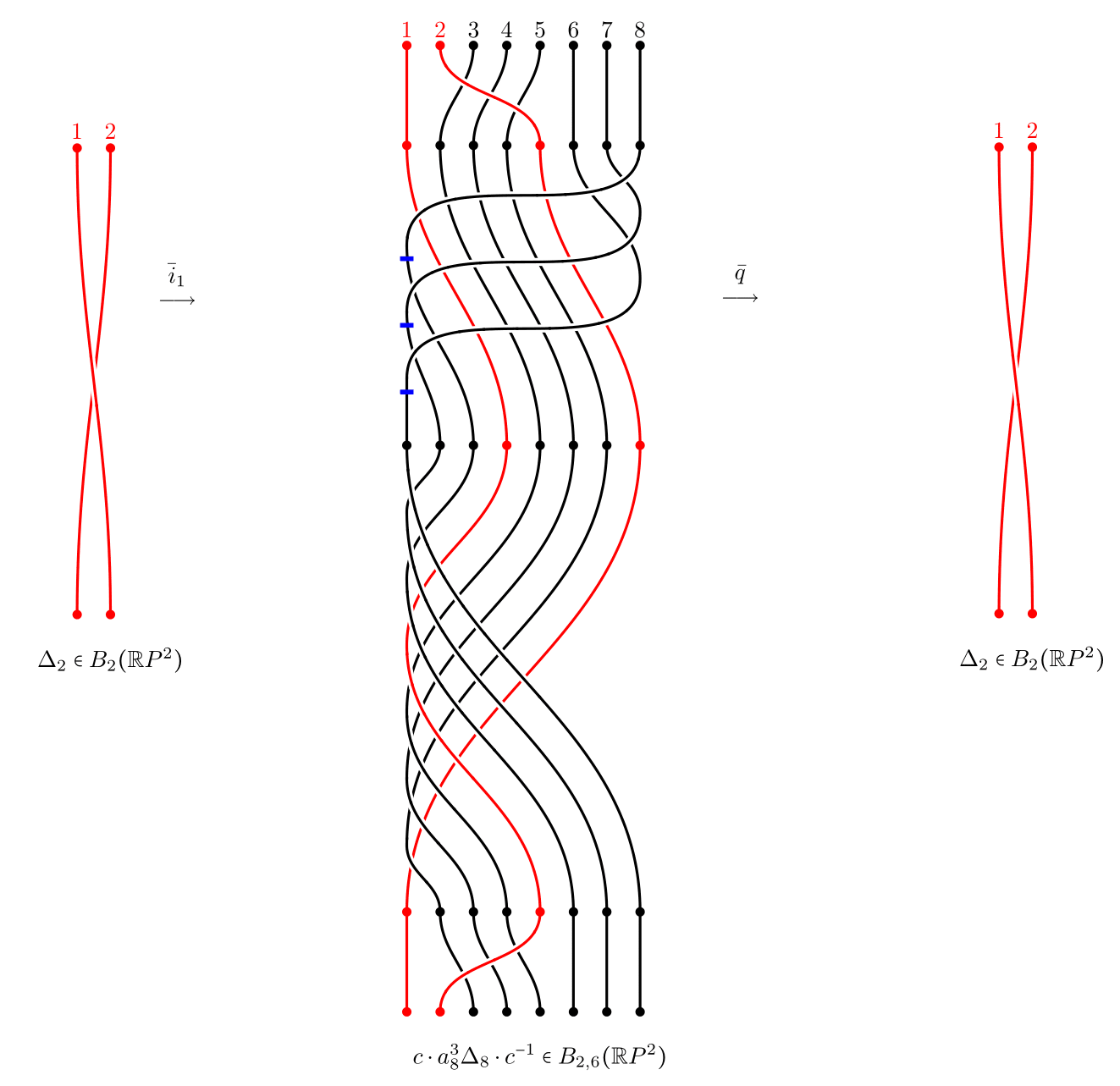}
			\caption{The image of $\Delta_2=\sigma_1$ under the map $\bar{i}_1$, for $k=3$. Note that with the blue dash we illustrate the element $\rho_1$.}
			\label{i1d}
		\end{figure}
		
		\item Now let $m=2k+1$, for $k\in \mathbb{N}$.
		From [\cite{gonccalves2012surface}, Theorem 5], for $n\geq 3$, the group $B_n(\mathbb{R}P^2)$, contains, $Dic_{8(n-1)}=\langle x,y\ | \ x^{4(n-1)}=1,\ x^{2(n-1)}=y^2,\ yxy^{-1}=x^{-1} \rangle$, the dicyclic group of order $8(n-1)$ as a subgroup. 
		So, $Dic_{8(2+2k)}$ is a subgroup of $B_{2+(2k+1)}(\mathbb{R}P^2)$, and from [\cite{gonccalves2012surface}, Proposition 15] we obtain
		\begin{align*}
		Dic_{8(2+2k)}=\langle b, \Delta a^{-1}\ |\ b^{4(2k+2)}=1,\ (\Delta a^{-1})^2=b^{2(2k+2)},
		(\Delta a^{-1})b(\Delta a^{-1})^{-1}=b^{-1} \rangle,
		\end{align*}
		where $b:=b_{2+(2k+1)}=\sigma_{2k+1}^{-1}\cdots\sigma_1^{-1}\rho_1$, $a:=a_{2+(2k+1)}=\sigma_{2k+2}^{-1}\cdots\sigma_1^{-1}\rho_1$ and $\Delta:=\Delta_{2+(2k+1)}=(\sigma_1\sigma_2\cdots\sigma_{2k+2})(\sigma_1\sigma_2\cdots\sigma_{2k+1})\cdots(\sigma_1\sigma_2)\sigma_1$. 
		We consider the subgroup of $Dic_{8(2+2k)}$ generated by the elements $b^{k+1}$ and $b^{k}\Delta a^{-1}$. From \eqref{combine} we get that $\langle b^{k+1}, b^{k}\Delta a^{-1} \rangle\cong Dic_{16}$. Therefore, the homomorphism 
		$$i_2:  B_2(\mathbb{R}P^2)\to B_{2+(2k+1)}(\mathbb{R}P^2)$$ defined by $$i_2(a_2)=b^{k+1}\ \text{and}\ i_2(\Delta_2)=b^{k}\Delta a^{-1}$$
		is an injective homomorphism of $B_2(\mathbb{R}P^2)$ in $B_{2+(2k+1)}(\mathbb{R}P^2)$. In order to obtain an embedding of $B_2(\mathbb{R}P^2)$ in $B_{2, (2k+1)}(\mathbb{R}P^2)$, we need to conjugate $b^{k+1}$ and $b^{k}\Delta a^{-1}$ by a suitable element, which is, as in the case (a), $c:=(\sigma_2\sigma_3\cdots\sigma_{k+1})$. Thus, we define the injective homomorphism 	$$\bar{i}_2:  B_2(\mathbb{R}P^2)\to B_{2,2k+1}(\mathbb{R}P^2)$$ defined by $$\bar{i}_2(a_2)=c\cdot b^{k+1}\cdot c^{-1}\ \text{and}\ \bar{i}_2(\Delta_2)=c\cdot b^{k}\Delta a^{-1}\cdot c^{-1}.$$
		Once again, as in the case (a), one can verify geometrically that $(\bar{q}\circ\bar{i}_2)(a_2)=a_2\in B_2(\mathbb{R}P^2)$ and $(\bar{q}\circ\bar{i}_2)(\Delta_2)=\Delta_2\in B_2(\mathbb{R}P^2)$, where $\bar{q}$ forgets the last $2k+1$ strands. Summing up, the homomorphism $\bar{i}_2$ is a section for the homomorphism $\bar{q}: B_{2,2k+1}(\mathbb{R}P^2)\to B_2(\mathbb{R}P^2)$.
	\end{enumerate}
	
	We conclude that, for every $m\in \mathbb{N}$, we have an explicit algebraic section $$\bar{s}: B_2(\mathbb{R}P^2)\to B_{2,m}(\mathbb{R}P^2)$$ for the homomorphism $\bar{q}:B_{2,m}(\mathbb{R}P^2) \to B_2(\mathbb{R}P^2)$ defined as follows:
	$$\bar{s}= \begin{cases} \bar{i}_1, & \text{for}\ m=2k, \\ \bar{i}_2, & \text{for}\ m=2k+1.\end{cases}$$
\end{proof}

\section{Lower central and derived series of the group $\beta_{n,m}$}\label{S4}
In this section, we determine, for $n\geq 2$ and for most values of $m$, the lower central and derived series of $\beta_{n,m}=B_m(\mathbb{R}P^2\setminus\{x_1,\dots, x_n\})$.

Let $i\in \mathbb{N}$. We recall that the lower central series of a group $G$ is defined as the descending normal series $\Gamma_1(G)=G\supseteq\Gamma_2(G)\supseteq\dots \supseteq\Gamma_n(G)\supseteq\dots$, where, for $i>1$, $\Gamma_i(G)=[\Gamma_{i-1}(G), G]$ is the subgroup of $G$ generated by the commutators $[x,y]=xyx^{-1}y^{-1}$, for $x\in \Gamma_{i-1}(G)$ and $y\in G$. Moreover, the derived series of a group $G$ is defined as the descending normal series $G^{(0)}=G\supseteq G^{(1)}\supseteq\dots \supseteq G^{(n)}\supseteq\dots$, where, for $i\geq1$, $G^{(i)}=[G^{(i-1)}, G^{(i-1)}]$ is the subgroup of $G$ generated by the commutators $[x,y]$, for $x,y\in G^{(i-1)}$. Note that $\Gamma_2(G)=G^{(1)}$.

For $m=1$ and $n\geq 1$, $\beta_{n,1}=\pi_1(\mathbb{R}P^2\setminus\{x_1,\dots,x_n\})=F_{n}$, where $F_n$ denotes the free group on $n$ generators. So the lower central and derived series of $\beta_{n,1}$ are those of free groups of finite rank.

\begin{proof}[Proof of Theorem \ref{th_l}]
	Let $n\geq 1$ and let $\Gamma_2:=\Gamma_2(\beta_{n,m})$. Suppose that $K\subset\Gamma_2$ is a normal subgroup of $\beta_{n,m}$. We consider the following commutative diagram:
	\begin{equation}\label{com}
	\begin{tikzcd}
	1 \ar[r] & \Gamma_2 \ar[d, "pr|_{\Gamma_2}"] \ar[r] & \beta_{n,m} \ar[d,"pr"] \ar[r, "ab"] & \beta_{n,m}/\Gamma_2 \ar[-,double line with arrow={-,-}]{d} \ar[r] & 1 \\
	1 \ar[r] & \Gamma_2/K \ar[r] &\beta_{n,m}/K \ar[r, "p"] & \beta_{n,m}/\Gamma_2 \ar[r] & 1,
	\end{tikzcd}
	\end{equation}
	where the map $ab$ is Abelianisation of $\beta_{n,m}$, the map $pr$ is the canonical projection from 
	$\beta_{n,m}$ to $\beta_{n,m}/K$ and the map $pr|_{\Gamma_2}$ is the restriction of $pr$ to $\Gamma_2$. The map $ab$ that factors through $\beta_{n,m}/K$, and thus the map $p$ satisfies $ab=p\circ pr$. 
	
	First we consider the case where $K=\Gamma_3$, for $\Gamma_3:=\Gamma_3(\beta_{n,m})$, and we will prove that, for $m\geq 3$, $\Gamma_2\big(\beta_{n,m}\big)=\Gamma_3\big(\beta_{n,m}\big)$. Let $m\geq 3$. Based on the commutative diagram \eqref{com}
	 we will obtain relations in $\beta_{n,m}/\Gamma_3$ by projecting the relations of $\beta_{n,m}$ given in Proposition \ref{pres3}. From Corollary \ref{presofquotient}, we know that $\beta_{n,m}/\Gamma_2$ is isomorphic to $\mathbb{Z}^n\times\mathbb{Z}_2$. In particular, from Remark \ref{exp}, we know that under the map $ab$, the generators  $\sigma_1,\dots,\sigma_{m-1}$ are sent to the same element $\sigma\in \beta_{n,m}/\Gamma_2$. Thus, without loss of generality let $\sigma=\bar{\sigma}_1\in \beta_{n,m}/\Gamma_2$, where $\bar{\sigma}_1=\sigma_1\Gamma_2$. It follows that $$pr(\sigma_i)=t_i\cdot{\sigma}_1\Gamma_3\in \beta_{n,m}/\Gamma_3,$$	
	for $1\leq i\leq m-1$, where $t_i\in \ker(p)=\Gamma_2/\Gamma_3$ and $t_1$ may be taken to be 1.
	
	For simplicity, we will denote ${\sigma}_1\Gamma_3$ by $\tilde{\sigma}_1$. Projecting now relation $(v)$ of Proposition \ref{pres3}, $\sigma_r\sigma_{r+1}\sigma_r=\sigma_{r+1}\sigma_r\sigma_{r+1}$, into $\beta_{n,m}/\Gamma_3$ we obtain $t_r\tilde{\sigma}_1t_{r+1}\tilde{\sigma}_1t_r\tilde{\sigma}_1=t_{r+1}\tilde{\sigma}_1t_r\tilde{\sigma}_1t_{r+1}\tilde{\sigma}_1$, and since $t_i\in\Gamma_2/\Gamma_3$, where $\Gamma_2/\Gamma_3$ is a central subgroup of $\beta_{n,m}/\Gamma_3$, it follows that $t_r=t_{r+1},$ for $1\leq r\leq m-1$, meaning that $1=t_1=\dots=t_{m-1}$. We conclude that $pr(\sigma_1)=\dots=pr(\sigma_{m-1})=\tilde{\sigma}_1.$
	
	Similarly, from Remark \ref{exp}, we know that under the map $ab$ the generators $\rho_{n+1},\dots,\rho_{n+m}$ projects to the same element $\rho$ in the Abelianisation $\beta_{n,m}/\Gamma_2$. Thus, without loss of generality let $\rho=\bar{\rho}_{n+1}$, where $\bar{\rho}_{n+1}=\rho_{n+1}\Gamma_2$. It follows that $$pr(\rho_i)=s_i\cdot\rho_{n+1}\Gamma_3\in \beta_{n,m}/\Gamma_3,$$	
	for $n+1\leq i\leq n+m$, where $s_i\in \ker(p)=\Gamma_2/\Gamma_3$ and $s_{n+1}$ may be taken to be $1$. 
	
	We will denote $\rho_{n+1}\Gamma_3$ by $\tilde{\rho}_{n+1}$. Projecting relation $(iii)$ of Proposition \ref{pres3}, $$ \rho_k\rho_j\rho_k^{-1} = (\sigma_{j-1-n}^{-1}\dots\sigma_{k+1-n}^{-1}\sigma_{k-n}^{2}\sigma_{k+1-n}\dots \sigma_{j-1-n})\rho_j,$$ for $n+1\leq k < j \leq n+m$, under $pr$, gives $s_k\tilde{\rho}_{n+1}s_j\tilde{\rho}_{n+1}\tilde{\rho}_{n+1}^{-1}s_k^{-1}=\tilde{\sigma}_1^2s_j\tilde{\rho}_{n+1}$, and since 
	$s_i\in\Gamma_2/\Gamma_3$, for $n+1\leq i\leq n+m$, where $\Gamma_2/\Gamma_3$ is central in $\beta_{n,m}/\Gamma_3$, it follows that
	$\tilde{\sigma}_1^2=1$.
	
	Since $m\geq 3$, the relation $\sigma_1\rho_{n+3}\sigma_1^{-1}=\rho_{n+3}$ ($(vii)$ of Proposition \ref{pres3}) exists in $\beta_{n,m}$, and projecting it into $\beta_{n,m}/\Gamma_3$, it follows that $\tilde{\sigma}_1s_{n+3}\tilde{\rho}_{n+1}\tilde{\sigma}_1^{-1}=s_{n+3}\tilde{\rho}_{n+1}$. Since $s_{n+3}\in\Gamma_2/\Gamma_3$ we obtain $$\tilde{\sigma}_1\tilde{\rho}_{n+1}=\tilde{\rho}_{n+1}\tilde{\sigma}_1.$$
	Furthermore, the projection of the relation $\sigma_{k-n}\rho_k\sigma_{k-n}^{-1}=\sigma_{k-n}^2\rho_{k+1}$ ($(vii)$ of Proposition \ref{pres3}), for $n+1\leq k \leq n+m-1$, yields $\tilde{\sigma}_1s_k\tilde{\rho}_{n+1}\tilde{\sigma}_1^{-1}=1\cdot s_{k+1}\tilde{\rho}_{n+1}$. We know that $\tilde{\sigma}_1\tilde{\rho}_{n+1}=\tilde{\rho}_{n+1}\tilde{\sigma}_1, s_{n+3}\in\Gamma_2/\Gamma_3$ and that $s_k\tilde{\rho}_{n+1}=s_{k+1}\tilde{\rho}_{n+1}$. Thus, $1=s_{n+1}=\dots =s_{n+m-1}=s_{n+m}$, and it follows that $$pr(\rho_{n+1})=\dots=pr(\rho_{n+m})=\tilde{\rho}_{n+1}.$$ 
	
	From Remark \ref{exp}, we have that, under the map $ab$, the generators $B_{1, j},\dots,B_{n, j}$ are sent to the elements $\beta_1,\dots ,\beta_n$, respectively, in $\beta_{n,m}/\Gamma_2(\beta_{n,m})$. Thus, without loss of generality it follows that $ab(B_{i,j})=\bar{B}_{i,n+1}\in \beta_{n,m}/\Gamma_2$, $1\leq i\leq n$ and $n+1\leq j\leq n+m$, where $\bar{B}_{i,n+1}={B}_{i,n+1}\Gamma_2$. Note that in Corollary \ref{presofquotient} the element $ab(B_{i,j})=\bar{B}_{i,n+1}$ is denoted by $\beta_i$. Thus, $pr(B_{i,j})=a_{i,j}\cdot B_{i,n+1}\Gamma_3\in \beta_{n,m}$, for $1\leq i\leq n, \ n+1\leq j\leq n+m$, where $a_{i,j}\in \ker(p)=\Gamma_2/\Gamma_3$ and $a_{i,n+1}$ may be taken to be $1$.
	
	We will denote $B_{i,n+1}\Gamma_3$ by $\tilde{B}_{i,n+1}$. Since $m\geq 3$, the relation $\sigma_1B_{i,n+3}\sigma_1^{-1}=B_{i,n+3}$ ($(vi)$ of Proposition \ref{pres3}), for $1\leq i\leq n$, exists in $\beta_{n,m}$, and projecting it into $\beta_{n,m}/\Gamma_3$, it follows that $\tilde{\sigma}_1a_{i,n+3}\tilde{B}_{i,n+1}\tilde{\sigma}_1^{-1}=a_{i,n+3}\tilde{B}_{i,n+1}$. Since $a_{i,n+3}\in\Gamma_2/\Gamma_3$ we obtain $$\tilde{\sigma}_1\tilde{B}_{i,n+1}=\tilde{B}_{i,n+1}\tilde{\sigma}_1,\ \text{for}\ 1\leq i\leq n.$$ Moreover, $pr(\sigma_{j-n}B_{i,j}\sigma_{j-n}^{-1})=pr(B_{i,j+1})$, for $1\leq i\leq n, \ n+1\leq j\leq n+m-1$, yields $\tilde{\sigma}_1a_{i,j}\tilde{B}_{i,n+1}\tilde{\sigma}_1^{-1}=a_{i,j+1}\tilde{B}_{i,n+1}$. We know that
	$\tilde{\sigma}_1\tilde{B}_{i,n+1}=\tilde{B}_{i,n+1}\tilde{\sigma}_1$ and that $a_{i,j}\in\Gamma_2/\Gamma_3$, and thus
	$1=a_{i,n+1}=\dots =a_{i,n+m-1}=a_{i,n+m}$, and $$pr(B_{i,j})=\tilde{B}_{i,n+1},\ \text{for} \ 1\leq i\leq n, \ n+1\leq j\leq n+m.$$
	
	We conclude that $\{\tilde{\sigma}_1, \tilde{\rho}_{n+1}, \tilde{B}_{1,n+1},\dots, \tilde{B}_{n,n+1}\}$ is a generating set for $\beta_{n,m}/\Gamma_3$, subject to the relations $\tilde{\sigma}_1^2=1,\ \tilde{\sigma}_1\tilde{\rho}_{n+1}=\tilde{\rho}_{n+1}\tilde{\sigma}_1\ \text{and}\ \tilde{\sigma}_1\tilde{B}_{i,n+1}=\tilde{B}_{1,n+1}\tilde{\sigma}_1$, for $1\leq i\leq n$.
	
	In order to determine further relations among these generators, we will now project some other relations of $\beta_{n,m}$ into $\beta_{n,m}/\Gamma_3$. Projecting relation ($iv$) of Proposition \ref{pres3}, we obtain $\tilde{\rho}_{n+1}\tilde{B}_{i,n+1}\tilde{\rho}_{n+1}^{-1}=\tilde{B}_{i,n+1}$, for $1\leq i\leq n$, which implies that, $\tilde{\rho}_{n+1}\tilde{B}_{i,n+1}=\tilde{B}_{i,n+1}\tilde{\rho}_{n+1}$, for $1\leq i\leq n$.
	Similarly, projecting relation ($i$) of Proposition \ref{pres3}, we obtain  $\tilde{B}_{i,n+1}\tilde{B}_{k,n+1}=\tilde{B}_{k,n+1}\tilde{B}_{i,n+1},$ for $1\leq i,k\leq n$.
	Finally, the projection of relation $\rho_k {\displaystyle \prod_{i=1}^{n} B_{i,k}}(\sigma_{k-1-n}\dots\sigma_1^2\dots\sigma_{k-1-n})= (\sigma_{k-n}\dots\sigma_{m-1}^2\dots\sigma_{k-n}) \rho_k^{-1}$ ($(ii)$ of Proposition \ref{pres3}) yields $\tilde{\rho}_{n+1}^2{\displaystyle \prod_{i=1}^{n} \tilde{B}_{i,n+1}}=1.$
	
	Summing up, we have shown that the generators $\{\tilde{\sigma}_1,\tilde{\rho}_{n+1}, \tilde{B}_{1,n+1},\dots, \tilde{B}_{n,n+1}\}$ of $\beta_{n,m}/\Gamma_3$ commute pairwise, so the quotient group $\beta_{n,m}/\Gamma_3$ is Abelian. By the universal property of the Abelianisation, it follows that $\Gamma_2$ is a subgroup of $\Gamma_3$, since any homomorphism of $\beta_{n,m}$ onto an Abelian group factors through $\beta_{n,m}/\Gamma_2$, which implies that $\Gamma_2\subseteq\Gamma_3$. But by the definition of the lower central series, we know that $\Gamma_3\subseteq\Gamma_2$, and therefore we conclude that $\Gamma_3=\Gamma_2$. As a result, $\Gamma_2(\beta_{n,m})=\Gamma_3(\beta_{n,m})$, for $m\geq3$.  
	
	We now prove the second statement, that is, for $m\geq 5$, $\beta_{n,m}^{(1)}=\beta_{n,m}^{(2)}$. Let $m\geq 5$ and let $K=\beta_{n,m}^{(2)}$. Based on the commutative diagram \eqref{com} and using the same argument as before, we have: $$pr(\sigma_i)=t_i\cdot{\sigma}_1\beta_{n,m}^{(2)}\in \beta_{n,m}/\beta_{n,m}^{(2)},$$	
	for $1\leq i\leq m-1$, where $t_i\in \ker(p)=\beta_{n,m}^{(1)}/\beta_{n,m}^{(2)}$ and $t_1$ may be taken to be $1$.
	Let ${\sigma}_1\beta_{n,m}^{(2)}$ be denoted by $\bar{\sigma}_1$. For $m\geq 5$, the projection by $pr$ of the relation $\sigma_r\sigma_s=\sigma_s\sigma_r$ ($(v)$ of Proposition \ref{pres3}), for $|r-s|>1$ implies that $\bar{\sigma}_1t_k\bar{\sigma}_1=t_k\bar{\sigma}_1\bar{\sigma}_1$, for $3\leq k\leq m-1$, and so $\bar{\sigma}_1$ commutes with $t_k$ for $3\leq k\leq m-1$. Moreover, for $4\leq l\leq m-1$, the projection of the same relation yields $t_l\bar{\sigma}_1t_2\bar{\sigma}_1=t_2\bar{\sigma}_1t_l\bar{\sigma}_1$. Since $\bar{\sigma}_1$ commutes with $t_l$ and $\beta_{n,m}^{(1)}/\beta_{n,m}^{(2)}$ is an Abelian group, it follows that $\bar{\sigma}_1t_2=t_2\bar{\sigma}_1$. Hence $t_k$ commutes with $\bar{\sigma}_1$, for $2\leq k \leq m-1$ and $m\geq 5$. Now, projecting the relation $\sigma_r\sigma_{r+1}\sigma_r=\sigma_{r+1}\sigma_r\sigma_{r+1}$ ($(v)$ of Proposition \ref{pres3}) into $\beta_{n,m}/\beta_{n,m}^{(2)}$ it follows that 
	$t_r\bar{\sigma}_1t_{r+1}\bar{\sigma}_1t_r\bar{\sigma}_1=t_{r+1}\bar{\sigma}_1t_r\bar{\sigma}_1t_{r+1}\bar{\sigma}_1$, and so for $2\leq r < m-1$ we have $t_2=\dots=t_{m-1}$, since $t_k$ commutes with $\bar{\sigma}_1$, for $2\leq k \leq m-1$. If $r=1$, the projection of the same relation yields $\bar{\sigma}_1t_2\bar{\sigma}_1\bar{\sigma}_1=t_2\bar{\sigma}_1\bar{\sigma}_1t_2\bar{\sigma}_1$, thus $t_2=1$ and $t_1=\dots=t_{m-1}=1$. We conclude that $$pr(\sigma_1)=\dots=pr(\sigma_{m-1})=\bar{\sigma}_1.$$
	
	As in the previous case we have $$pr(\rho_i)=s_i\cdot\rho_{n+1}\beta_{n,m}^{(2)}\in \beta_{n,m}/\beta_{n,m}^{(2)},$$	
	for $n+1\leq i\leq n+m$, where $s_i\in \ker(p)=\beta_{n,m}^{(1)}/\beta_{n,m}^{(2)}$ and $s_{n+1}$ may be taken to be $1$.
	
	We will denote $\rho_{n+1}\beta_{n,m}^{(2)}$ by $\bar{\rho}_{n+1}$. The proof that $\bar{\sigma}_1^2=1$ differs here, since we can no longer use the argument of central elements. Projecting the relation   $\rho_{n+1}\rho_{n+2}\rho_{n+1}^{-1} = \sigma_1^{2}\rho_{n+2}$, which is relation $(iii)$ of Proposition \ref{pres3} for $k=n+1$ and $j=n+2$, into $\beta_{n,m}/\beta_{n,m}^{(2)}$ we obtain
	$\bar{\rho}_{n+1}s_{n+2}\bar{\rho}_{n+1}\bar{\rho}_{n+1}^{-1}=\bar{\sigma}_1^2s_{n+2}\bar{\rho}_{n+1}$.
	It follows that 
	\begin{equation}\label{cor}
	\bar{\rho}_{n+1}s_{n+2}\bar{\rho}_{n+1}^{-1}s_{n+2}^{-1}=\bar{\sigma}_1^2.
	\end{equation}
	 Moreover, projecting the relation $\sigma_2\rho_{n+1}\sigma_2^{-1}=\rho_{n+1}$ ($(vii)$ of Proposition \ref{pres3}) into $\beta_{n,m}/\beta_{n,m}^{(2)}$ we obtain $\bar{\sigma}_1\bar{\rho}_{n+1}\bar{\sigma}_1^{-1}=\bar{\rho}_{n+1}$, which implies that $\bar{\rho}_{n+1}$ commutes with $\bar{\sigma}_1$. The projection of the relation $\sigma_1\rho_{n+1}\sigma_1^{-1}=\sigma_1^2\rho_{n+2}$ ($(vii)$ of Proposition \ref{pres3}) into $\beta_{n,m}/\beta_{n,m}^{(2)}$ gives $\bar{\sigma}_1\bar{\rho}_{n+1}\bar{\sigma}_1^{-1}=\bar{\sigma}_1^2s_{n+2}\bar{\rho}_{n+1}$, which, using the fact that $\bar{\rho}_{n+1}$ commutes with $\bar{\sigma}_1$, yields $s_{n+2}=\bar{\sigma}_1^{-2}$. Combining the relation $s_{n+2}=\bar{\sigma}_1^{-2}$ with relation \eqref{cor} and the fact that $\bar{\rho}_{n+1}$ commutes with $\bar{\sigma}_1$, we conclude that $\bar{\sigma}_1^2=1$ and $s_{n+2}=1$.
	
	We now prove that the generators $\rho_{n+1}, \dots, \rho_{n+m}$ all have the same image in $\beta_{n,m}/\beta_{n,m}^{(2)}$ under $pr$. 
	Since $m\geq 5$, the relation $\sigma_l\rho_k\sigma_l^{-1}=\rho_k$ ($(vii)$ of Proposition \ref{pres3}), which holds for $l\neq k-n, k-n-1$ and $n+1\leq k\leq n+m$, exists in $\beta_{n,m}$. Let $n+1\leq k\leq n+m-1$. Thus, there exists some $l$, where $1\leq l\leq m-1$, such that $\sigma_l\rho_k\sigma_l^{-1}=\rho_k$. So, projecting this relation into $\beta_{n,m}/\beta_{n,m}^{(2)}$ it follows that $\bar{\sigma}_1s_k\bar{\rho}_{n+1}\bar{\sigma}_1^{-1}=s_k\bar{\rho}_{n+1}$. Thus, $\bar{\sigma_1}$ commutes with $s_k\bar{\rho}_{n+1}$, for $n+1\leq k\leq n+m-1$. Furthermore, the projection of the relation $\sigma_{k-n}\rho_k\sigma_{k-n}^{-1}=\sigma_{k-n}^2\rho_{k+1}$ ($(vii)$ of Proposition \ref{pres3}) in $\beta_{n,m}/\beta_{n,m}^{(2)}$, for $n+1\leq k \leq n+m-1$, implies that $\bar{\sigma}_1s_k\bar{\rho}_{n+1}\bar{\sigma}_1^{-1}=1\cdot s_{k+1}\bar{\rho}_{n+1}$. Since $\bar{\sigma}_1$ commutes with $s_k\bar{\rho}_{n+1}$, we obtain $s_k\bar{\rho}_{n+1}=s_{k+1}\bar{\rho}_{n+1}$ for $n+1\leq k\leq n+m-1$ and therefore $s_k=s_{k+1}$ for $n+1\leq k\leq n+m-1$. Hence, $pr(\rho_{n+1})=\dots=pr(\rho_{n+m})=\bar{\rho}_{n+1}$, and $\bar{\sigma}_1\bar{\rho}_{n+1}=\bar{\rho}_{n+1}\bar{\sigma}_1$.
	
	As before, we have $pr(B_{i,j})=a_{i,j}\cdot B_{i,n+1}\beta_{n,m}^{(2)}\in \beta_{n,m}/\beta_{n,m}^{(2)}$, for $1\leq i\leq n, \ n+1\leq j\leq n+m$, where $a_{i,j}\in \ker(p)=\beta_{n,m}^{(1)}/\beta_{n,m}^{(2)}$ and $a_{i,n+1}$ may be taken to be $1$. We will denote $B_{i,n+1}\beta_{n,m}^{(2)}$ by $\bar{B}_{i,n+1}$. Since $m\geq 5$, the relation $\sigma_rB_{i,j}\sigma_r^{-1}=B_{i,j}$ ($(vi)$ of Proposition \ref{pres3}), for $r\neq j-n-1, j-n$, where $n+1\leq j\leq n+m$, $1\leq r\leq m-1$ and $1\leq i\leq n$, exists in $\beta_{n,m}$. Let $n+1\leq j\leq n+m-1$. Thus, there exists some $l$, where $1\leq l\leq m-1$, such that $\sigma_lB_{i,j}\sigma_l^{-1}=B_{i,j}$. So, projecting this relation into $\beta_{n,m}/\beta_{n,m}^{(2)}$ it follows that $\bar{\sigma}_1a_{i,j}\bar{B}_{i,n+1}\bar{\sigma}_1^{-1}=a_{i,j}\bar{B}_{i,n+1}$. Hence, $\bar{\sigma}_1$ commutes with $a_{i,j}\bar{B}_{i,n+1}$ for $1\leq i\leq n,\ n+1\leq j\leq n+m-1$. Furthermore, the projection of the relation $\sigma_{j-n}B_{i,j}\sigma_{j-n}^{-1}=B_{i,j+1}$ ($(vi)$ of Proposition \ref{pres3}) in $\beta_{n,m}/\beta_{n,m}^{(2)}$, for $1\leq i\leq n$ and $n+1\leq j\leq n+m-1$, yields $\bar{\sigma}_1a_{i,j}\bar{B}_{i,n+1}\bar{\sigma}_1^{-1}=a_{i,j+1}\bar{B}_{i,n+1}$. Hence, $a_{i,j}=a_{i,j+1}$, for $1\leq i\leq n$ and $n+1\leq j\leq n+m-1$. As a result, $pr(B_{i,j})=\bar{B}_{i,n+1}$, for $1\leq i\leq n, \ n+1\leq j\leq n+m$, and moreover $\bar{\sigma}_1\bar{B}_{i,n+1}=\bar{B}_{i,n+1}\bar{\sigma}_1$, for $1\leq i\leq n$.
	
	We conclude that the projection of the generating set of $\beta_{n,m}$ given in Proposition \ref{pres3} gives rise to the following generating set for $\beta_{n,m}/\beta_{n,m}^{(2)}$: $$\{\bar{\sigma}_1, \bar{\rho}_{n+1}, \bar{B}_{1,n+1},\dots, \bar{B}_{n,n+1}\}.$$
	
	Once again, projecting relations ($i$) and ($iv$) of Proposition \ref{pres3}, that hold in $\beta_{n,m}$, into $\beta_{n,m}/\beta_{n,m}^{(2)}$, we obtain $\bar{B}_{i,n+1}\bar{B}_{k,n+1}=\bar{B}_{k,n+1}\bar{B}_{i,n+1}$, for $1\leq i,k\leq n$ and
	$\bar{\rho}_{n+1}\bar{B}_{i,n+1}=\bar{B}_{i,n+1}\bar{\rho}_{n+1}$, for $1\leq i\leq n$, respectively.
	
	Finally, the projection of relation ($ii$) of Proposition \ref{pres3} in $\beta_{n,m}/\beta_{n,m}^{(2)}$ implies that $\bar{\rho}_{n+1}^2{\displaystyle \prod_{i=1}^{n} \bar{B}_{i,n+1}}=1$.
	
	We observe that the generators $\{\bar{\sigma}_1, \bar{\rho}_{n+1}, \bar{B}_{1,n+1},\dots, \bar{B}_{n,n+1}\}$ of $\beta_{n,m}/\beta_{n,m}^{(2)}$ commute pairwise. That means that this quotient group is an Abelian group. Applying the same argument as before, we conclude that  $\beta_{n,m}^{(1)}\subseteq \beta_{n,m}^{(2)}$. By the definition of the derived series, we know that $\beta_{n,m}^{(2)}\subseteq \beta_{n,m}^{(1)}$, and therefore $\beta_{n,m}^{(2)}=\beta_{n,m}^{(1)}$, which completes the proof. 
\end{proof}

\section{The splitting problem for the cases $n\geq3$}\label{S5}
In this section, for $n\geq 3$ and $m>1$, we will provide necessary conditions, for the short exact sequence \eqref{ms} to split. Let $N$ be a normal subgroup of $B_{n,m}(\mathbb{R}P^2)$ that is also contained in $\beta_{n,m}$. Consider the following diagram of short exact sequences:
\begin{equation}\label{cd}
\begin{tikzcd}
1 \ar[r] & \beta_{n,m} \ar[d,"pr|_{\beta_{n,m}}"] \ar[r] & B_{n,m}(\mathbb{R}P^2) \ar[d,"pr"] \ar[r, "\bar{q}"] & B_n(\mathbb{R}P^2) \ar[-,double line with arrow={-,-}]{d} \ar[r] & 1 \\
1 \ar[r] & \beta_{n,m}/N \ar[r] & B_{n,m}(\mathbb{R}P^2)/N \ar[r, "q_*"] & B_n(\mathbb{R}P^2) \ar[r] & 1,
\end{tikzcd}
\end{equation}
where $q_*:B_{n,m}(\mathbb{R}P^2)/N\to B_n(\mathbb{R}P^2)$ denotes the homomorphism induced by $\bar{q}$ and $pr$ is the canonical projection from $B_n(\mathbb{R}P^2)$ to $B_{n,m}(\mathbb{R}P^2)/N$.
Suppose that the homomorphism $\bar{q}:B_{n,m}(\mathbb{R}P^2)\to B_n(\mathbb{R}P^2)$ admits a section $\bar{s}:B_n(\mathbb{R}P^2)\to B_{n,m}(\mathbb{R}P^2)$. Then $q_*:B_{n,m}(\mathbb{R}P^2)/N\to B_n(\mathbb{R}P^2)$ admits a section $s_*=pr\circ\bar{s}:B_n(\mathbb{R}P^2)\to B_{n,m}(\mathbb{R}P^2)/N$.
Since the subgroups of the lower central series of a group are characteristic, $\Gamma:=\Gamma_2(\beta_{n,m})$ is a characteristic subgroup of $\beta_{n,m}$. Further $\beta_{n,m}$ is a normal subgroup of $B_{n,m}(\mathbb{R}P^2)$, so $\Gamma$ is a normal subgroup of $B_{n,m}(\mathbb{R}P^2)$ contained in $\beta_{n,m}$. Thus, we obtain the following short exact sequence from \eqref{cd}:
\begin{equation}\label{qfinal}
\begin{tikzcd}
1 \ar[r]& \beta_{n,m}/\Gamma \ar[r] & B_{n,m}(\mathbb{R}P^2)/\Gamma \ar[r, "q"] & B_n(\mathbb{R}P^2)\ar[r] & 1.
\end{tikzcd}
\end{equation}

In order to study the short exact sequence \eqref{qfinal} we need to know a presentation of the quotients $\beta_{n,m}/\Gamma$ and $B_{n,m}(\mathbb{R}P^2)/\Gamma$.
We already  have a presentation of the Abelian group $\beta_{n,m}/\Gamma$, given by Corollary \ref{presofquotient}. We recall that $\beta_{n,m}/\Gamma\cong \mathbb{Z}^n\times \mathbb{Z}_2$, where $\rho$, $\beta_1,\dots ,\beta_{n-1}$ generate the $\mathbb{Z}^n$-component and $\sigma$ the $\mathbb{Z}_2$-component. One can obtain a presentation of $B_{n,m}(\mathbb{R}P^2)/\Gamma$ using the presentation of $\beta_{n,m}/\Gamma$, given by Corollary \ref{presofquotient}, the presentation of $B_n(\mathbb{R}P^2)$ given in Theorem \ref{vb}, and applying standard techniques for obtaining a presentation of group extensions as described in [\cite{johnsonpresentation}, p. 139] and presented in detail in the proofs of Propositions \ref{pre1}, \ref{presentationpunctures}, \ref{pres3} and Theorem \ref{pre4}. 

However, for $N=\Gamma$ the group $\beta_{n,m}/\Gamma$ is the Abelianisation of $\beta_{n,m}$. Thus, from commutative diagram of short exact sequences \eqref{cd}, a presentation of $B_{n,m}(\mathbb{R}P^2)/\Gamma$ may also be obtained straightforwardly by considering the union of the generators of $\beta_{n,m}/\Gamma$ and the coset representatives of the generators of $B_n(\mathbb{R}P^2)$ as a set of generators, and the relations of $B_{n,m}(\mathbb{R}P^2)$, given in Theorem \ref{pre4}, projected into $B_{n,m}(\mathbb{R}P^2)/\Gamma$ as a set of relations, as described in the following proposition.
\begin{proposition}\label{restrictions}
	For $n\geq 2, \ m\geq 1$, the following constitutes a presentation of $B_{n,m}(\mathbb{R}P^2)/\Gamma$, where $\Gamma=\Gamma_2(\beta_{n,m})$.
	\\
	\\ \textbf{Generators}: $\rho$, $\sigma$, $\beta_1,\dots ,\beta_n$, $\tau_1,\dots ,\tau_{n-1}$, $q_1,\dots ,q_n$.
	\\\textbf{Relations:} \begin{enumerate}[label=(\Roman*)]	
		\item The relations ($i$)-($iii$) from Corollary \ref{presofquotient},
		\item For $1\leq i, k \leq n-1,$ \ $1\leq j \leq n,$\\
		$\tau_i\tau_k=\tau_k\tau_i,$ \ for \ $|i-k|>1$, \\
		$\tau_i\tau_{i+1}\tau_i=\tau_{i+1}\tau_i\tau_{i+1},$ for $1\leq i < n-1,$\\
		$\tau_iq_j=q_j\tau_i,$ \ for \ $j \neq i, i+1,$\\
		$q_i=\tau_iq_{i+1}\tau_i,$\\
		$\tau_i^2=q_{i+1}^{-1}q_{i}^{-1}q_{i+1}q_i,$\\
		$q_1^2=(\tau_1\tau_2\dots\tau_{n-2}\tau_{n-1})\beta_n^m(\tau_{n-1}\tau_{n-2}\dots\tau_2\tau_1).$
		\item \begin{enumerate}
			\item For \ $1\leq i\leq n-1$, \ $1\leq j\leq n$,\\
			$\sigma\tau_i=\tau_i\sigma,$\\
			$\sigma q_j=q_j\sigma$.
			\item For \ $1\leq i\leq n-1$, \ $1\leq j\leq n$,\\
			$\rho\tau_i=\tau_i\rho,$\\
			$q_j\rho = \beta_j\rho q_j$.		
			\item For \ $1\leq i\leq n-1$, \ $1\leq k\leq n$,\\
			\begin{equation*}
			\tau_i\beta_k\tau_i^{-1}=
			\begin{cases}
			\beta_k, & \text{for}\ i\neq k-1,k.\\
			\beta_{k-1}, & \text{for} \ i=k-1.\\
			\beta_{k+1}, & \text{for} \ i=k.
			\end{cases}
			\end{equation*}
			\item For \ $1\leq j, k\leq n$,\\
			\begin{equation*}
			q_j\beta_kq_j^{-1}=
			\begin{cases}
			\beta_k, & \text{for}\ j\neq k.\\
			\beta_k^{-1}, & \text{for} \ j=k.
			\end{cases}
			\end{equation*}\\\\
		\end{enumerate}
	\end{enumerate}
	
\end{proposition}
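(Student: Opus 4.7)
The plan is to apply the standard technique for the presentation of a group extension \cite{johnsonpresentation}, already used repeatedly in the proofs of Propositions \ref{pre1}, \ref{presentationpunctures}, \ref{pres3} and Theorem \ref{pre4}, to the bottom row of the commutative diagram \eqref{cd} specialised at $N=\Gamma$:
\[
1 \longrightarrow \beta_{n,m}/\Gamma \longrightarrow B_{n,m}(\mathbb{R}P^2)/\Gamma \xrightarrow{q_{*}} B_n(\mathbb{R}P^2) \longrightarrow 1.
\]
A presentation of the kernel $\beta_{n,m}/\Gamma$, which is the Abelianisation of $\beta_{n,m}$, is provided by Corollary \ref{presofquotient}, with generators $\rho,\sigma,\beta_1,\dots,\beta_n$. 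A presentation of the quotient $B_n(\mathbb{R}P^2)$ is furnished by Theorem \ref{vb}, with Van Buskirk generators $\sigma_1,\dots,\sigma_{n-1}$ and $\rho_1,\dots,\rho_n$; for coset representatives under $pr$ one keeps the notation $\tau_1,\dots,\tau_{n-1}$ and $q_1,\dots,q_n$ already chosen in Theorem \ref{pre4}. The union of these two sets is the claimed generating set of $B_{n,m}(\mathbb{R}P^2)/\Gamma$.

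The relations then split into three classes. The first is the set of relations of the kernel, giving directly (I). The second class is obtained by lifting the defining relations of $B_n(\mathbb{R}P^2)$ to $B_{n,m}(\mathbb{R}P^2)$ and then projecting them to $B_{n,m}(\mathbb{R}P^2)/\Gamma$; this work has essentially been carried out in part (II) of Theorem \ref{pre4}, where all Van Buskirk relations except the last one lift as identities and hence project to the identical relations (II) of the statement. The only substantive calculation is for the last surface relation of $B_n(\mathbb{R}P^2)$, which by Theorem \ref{pre4}(II) lifts to
\[
q_1^{2}=(\tau_1\tau_2\cdots\tau_{n-1})(B_{n,n+1}B_{n,n+2}\cdots B_{n,n+m})(\tau_{n-1}\cdots\tau_1);
\]
since every $B_{n,n+j}$ has image $\beta_n$ in $\beta_{n,m}/\Gamma$, the middle factor collapses to $\beta_n^{m}$, yielding the last relation of (II).

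The third class consists of the conjugates of the kernel generators by the coset representatives, and is obtained by projecting the right-hand sides of (III)(a)--(d) of Theorem \ref{pre4} into the Abelian group $\beta_{n,m}/\Gamma$. The commutations of $\sigma$ with $\tau_i,q_j$ and of $\rho$ with $\tau_i$ are read off directly from (III)(a) and the first part of (III)(b). The key observation for the remaining right-hand sides is that, by Remark \ref{exp}, every $B_{i,k}$ with $1\le i\le n$ projects to $\beta_i$, and hence any conjugate of $B_{i,k}$ by an element of $\beta_{n,m}$ still projects to $\beta_i$ after quotienting by $\Gamma$. Thus (III)(b) of Theorem \ref{pre4}, namely $q_t\rho_k q_t^{-1}=E_{t,k}\rho_k=N_{t,k}^{-1}B_{t,k}N_{t,k}\rho_k$, projects to $q_j\rho=\beta_j\rho q_j$; (III)(c) yields $\tau_i\beta_k\tau_i^{-1}$ equal to $\beta_{k-1},\beta_{k+1},\beta_k$ according to $i=k-1$, $i=k$, or otherwise, once one notes that $B_{i,j}^{-1}B_{i-1,j}B_{i,j}$ projects to $\beta_{i-1}$; and (III)(d) gives $q_j\beta_kq_j^{-1}=\beta_k$ when $j\ne k$ (both cases $t<i$ and $i<t$ produce a conjugate of $B_{i,k}$) while the case $t=i$ yields $q_j\beta_jq_j^{-1}=\beta_j^{-1}$, because of the $B_{i,k}^{-1}$ sitting at the centre of the conjugating word. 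The main obstacle is therefore purely bookkeeping for this third class: once one tracks that the complicated factors $N_{t,k},E_{t,k},\rho_k^{-1}E_{t,k}\rho_k$ all lie in $\beta_{n,m}$ and hence simplify upon projection, the stated relations drop out, and comparing the resulting list with the statement completes the proof.
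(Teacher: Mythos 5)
Your proposal is correct and follows essentially the same route as the paper, which obtains Proposition \ref{restrictions} by taking the union of the generators of $\beta_{n,m}/\Gamma$ (Corollary \ref{presofquotient}) with the coset representatives $\tau_i,q_j$ of the Van Buskirk generators and projecting the relations of Theorem \ref{pre4} into the quotient, the key simplification being exactly the one you identify: all the conjugating words $N_{t,k}$, $E_{t,k}$, $\rho_k^{-1}E_{t,k}\rho_k$ lie in $\beta_{n,m}$ and hence act trivially on the Abelian kernel $\beta_{n,m}/\Gamma$. Your bookkeeping for classes (II) and (III), including the collapse of $B_{n,n+1}\cdots B_{n,n+m}$ to $\beta_n^{m}$ and the sign in $q_j\beta_jq_j^{-1}=\beta_j^{-1}$ coming from the central $B_{i,k}^{-1}$, matches the paper's intended argument.
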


\begin{remark}
	In the statement of Proposition \ref{restrictions} we denote by $\tau_{i}$ and $q_{j}$ the coset representatives of the generators $\sigma_i$ and $\rho_j$ of $B_n(\mathbb{R}P^2)$, given by Theorem \ref{vb}, for $1\leq i\leq n-1$ and $1\leq j\leq n$.
\end{remark}          

Now, suppose that there exists a section $\bar{s} : B_n(\mathbb{R}P^2) \to B_{n,m}(\mathbb{R}P^2)$ for the homomorphism $\bar{q}: B_{n,m}(\mathbb{R}P^2) \to B_n(\mathbb{R}P^2)$. As we already saw, it follows that there exists a section $s_*:B_n(\mathbb{R}P^2) \to B_{n,m}(\mathbb{R}P^2)/\Gamma$ for $q_*:B_{n,m}(\mathbb{R}P^2)/\Gamma\to B_n(\mathbb{R}P^2)$. From Corollary \ref{presofquotient}, the set $\{\beta_1,\dots ,\beta_{n-1}, \rho, \sigma\}$ generates $\ker(q_*)$, which is the group $\beta_{n,m}/\Gamma\cong \mathbb{Z}^n\times \mathbb{Z}_2$. This allows us to describe the image of the elements of the generating set of $B_n(\mathbb{R}P^2)$, under the section $s_*$, as follows:\\
\begin{equation}\label{imageoft}
s_*(\tau_i) =\tau_i\cdot\beta_1^{k_{i,1}}\beta_2^{k_{i,2}}\cdots\beta_{n-1}^{k_{i,n-1}}\rho^{l_i}\sigma^{m_i},\ \text{for}\ 1\leq i \leq n-1,\\
\end{equation}

\begin{equation}\label{imageofq}
s_*(q_j) =
q_j\cdot\beta_1^{\bar{k}_{j,1}}\beta_2^{\bar{k}_{j,2}}\cdots\beta_{n-1}^{\bar{k}_{j,n-1}}\rho^{\bar{l}_j}\sigma^{\bar{m}_j},\ \text{for}\ 1\leq j \leq n,\\
\end{equation} 
where $k_{i,1}, k_{i,2}, \dots, k_{i,n-1}, l_i, \bar{k}_{j,1}, \bar{k}_{j,2}, \bar{k}_{j,n-1}, \bar{l}_j \in \mathbb{Z}\ \text{and}\ m_i, \bar{m}_j\in\{0,1\}.$ Note that these integers are unique for every $s_*(\tau_i)$ and $s_*(q_j)$, where $1\leq i \leq n-1$ and $1\leq j \leq n$.
Under the assumption that there exists a section $s_*:B_n(\mathbb{R}P^2) \to B_{n,m}(\mathbb{R}P^2)/\Gamma$, the images under $s_*$ of the relations in $B_n(\mathbb{R}P^2)$ are also relations in $B_{n,m}(\mathbb{R}P^2)/\Gamma$. In this way, we will obtain further information regarding the exponents in the formulas \eqref{imageoft}, \eqref{imageofq} and possible restrictions for the value of $m$, under the assumption that the short exact sequence \eqref{ms} splits. 

Based on the presentation of $B_n(\mathbb{R}P^2)$ for $n\geq 3$ given by Theorem \ref{vb}, we have the following six relations, which hold in $B_{n,m}(\mathbb{R}P^2)/\Gamma$:
\begin{itemize}
	\item \textbf{R1.} $s_*(\tau_iq_j) = s_*(q_j\tau_i)$, for $j\neq i, i+1,\ 1\leq i\leq n-1,\ 1\leq j\leq n.$
	\item \textbf{R2.} $s_*(q_i) = s_*(\tau_iq_{i+1}\tau_i)$, for $1\leq i\leq n-1.$
	\item \textbf{R3.} $s_*(\tau_i\tau_{i+1}\tau_i) = s_*(\tau_{i+1}\tau_i\tau_{i+1})$, for $1\leq i< n-1.$
	\item \textbf{R4.} $s_*(\tau_i^2) = s_*(q_{i+1}^{-1}q_{i}^{-1}q_{i+1}q_{i})$, for $1\leq i\leq n-1.$
	\item \textbf{R5.} $s_*(\tau_i\tau_j) = s_*(\tau_j\tau_i)$, for \ $|i-j|>1$ \ and \ $1\leq i,j\leq n-1.$
	\item \textbf{R6.} $s_*(q_1^2)=s_*(\tau_1\tau_2\cdots\tau_{n-2}\tau_{n-1}^2\tau_{n-2}\cdots\tau_2\tau_1).$
\end{itemize}

In order to prove Proposition \ref{rquotient}, we will make use of the following relations in $B_{n,m}(\mathbb{R}P^2)/\Gamma$ that appear in Proposition \ref{restrictions}: 
\begin{enumerate}
	\item $\sigma^2=1$, relation ($ii$) of Corollary \ref{presofquotient}\label{enum1},
	\item $\beta_n = \beta_{1}^{-1}\cdots \beta_i^{-1}\cdots\beta_{n-1}^{-1}\rho^{-2}$, relation ($iii$) of Corollary \ref{presofquotient},
	\item $\beta_j, \sigma, \rho$ commute pairwise, relation ($i$) of Corollary \ref{presofquotient},
	\item $\tau_i $ commutes with $\sigma, \rho$, relations $(III)(a)$, $(III)(b)$ of Proposition \ref{restrictions},
	\item $\beta_i\tau_i = \tau_i \beta_{i+1}$ and $\beta_{i+1}\tau_i = \tau_i \beta_i$, relation $(III)(c)$ of Proposition \ref{restrictions},
	\item $q_i = \tau_iq_{i+1}\tau_i$ and $\tau_iq_j = q_j\tau_i$, for $j\neq i, i+1$, relation $(II)$ of Proposition \ref{restrictions},
	\item $q_j$ commutes with $\sigma$, relation $(III)(a)$ of Proposition \ref{restrictions},
	\item $\beta_jq_j = q_j\beta_j^{-1}$ and $\beta_iq_j = q_j\beta_i$, for $j\neq i$, relation $(III)(d)$ of Proposition \ref{restrictions},
	\item $\rho q_j = q_j\beta_j\rho$, which is deduced from $q_j\rho=\beta_j\rho q_j$ and $\beta_jq_j = q_j\beta_j^{-1}$ given by $(III)(d)$ and $(III)(b)$ of Proposition \ref{restrictions} respectively. Thus,  $\rho^k q_j = q_j\beta_j^k\rho^k,\ \text{for}\ k\in \mathbb{Z},\label{enum2}$
\end{enumerate}
where $1\leq i\leq n-1$ and $1\leq j \leq n$.
\begin{proposition}\label{rquotient}
	Let $m\geq1$ and $n\geq 3$.
	If the following short exact sequence
	\begin{equation}\label{new}
	\begin{tikzcd}
	1 \ar[r]& B_m(\mathbb{R}P^2\setminus\{x_1,\dots,x_n\}) \ar[r] & B_{n,m}(\mathbb{R}P^2) \ar[r, "\bar{q}"] & B_n(\mathbb{R}P^2)\ar[r] & 1
	\end{tikzcd}
	\end{equation}
	splits, then $m=k(n-1)$, where $k\geq1$.
\end{proposition}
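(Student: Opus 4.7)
The plan is to exploit the assumed section $\bar{s}$ via diagram~\eqref{cd} with $N=\Gamma=\Gamma_2(\beta_{n,m})$: $\bar{s}$ descends to a section $s_*\colon B_n(\mathbb{R}P^2)\to B_{n,m}(\mathbb{R}P^2)/\Gamma$ of $q_*$, and since $\beta_{n,m}/\Gamma\cong\mathbb{Z}^n\times\mathbb{Z}_2$ by Corollary~\ref{presofquotient}, $s_*$ is described by the (a priori unknown) integer parameters of formulas~\eqref{imageoft} and~\eqref{imageofq}. I would then apply $s_*$ to each of the six defining relations R1--R6 of $B_n(\mathbb{R}P^2)$, expanding both sides by means of the conjugation and commutation rules (1)--(9). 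Every such expansion reduces, after moving all $\beta_k,\rho,\sigma$-letters past the $\tau_i,q_j$-letters, to a product of the form $(\text{coset representative})\cdot w$ with $w\in\beta_{n,m}/\Gamma$. Since $s_*$ is a homomorphism, the coset-representative parts on the two sides must agree, and the residual equality in the free abelian group $\mathbb{Z}^n\times\mathbb{Z}_2$ yields numerical constraints on the exponents by coordinate comparison.

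The decisive relation is R6. In $B_n(\mathbb{R}P^2)$ it reads $q_1^2=\tau_1\tau_2\cdots\tau_{n-1}^2\cdots\tau_2\tau_1$, whereas by relation~(II) of Proposition~\ref{restrictions} the same element in $B_{n,m}(\mathbb{R}P^2)/\Gamma$ is $q_1^2=(\tau_1\cdots\tau_{n-1})\beta_n^m(\tau_{n-1}\cdots\tau_1)$. A short induction using $\tau_i^{-1}\beta_{i+1}\tau_i=\beta_i$ from relation~(III)(c) shows that $(\tau_1^{-1}\cdots\tau_{n-1}^{-1})\beta_n(\tau_{n-1}\cdots\tau_1)=\beta_1$, so
\[
q_1^2=(\tau_1\tau_2\cdots\tau_{n-1}^2\cdots\tau_2\tau_1)\cdot\beta_1^m\quad\text{in }B_{n,m}(\mathbb{R}P^2)/\Gamma.
\]
Consequently, the section-identity $s_*(q_1)^2=s_*(\tau_1\cdots\tau_{n-1}^2\cdots\tau_1)$ collapses, after cancelling the common representative $\tau_1\cdots\tau_{n-1}^2\cdots\tau_1$, to a single abelian equality $A_2=\beta_1^m\cdot A_1$ in $\beta_{n,m}/\Gamma$. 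Here $A_1$ is the abelian part of $s_*(q_1)^2$, computed directly from $q_1\beta_1q_1^{-1}=\beta_1^{-1}$ and $q_1\rho q_1^{-1}=\beta_1\rho$ to be $\beta_1^{\bar{l}_1}\beta_2^{2\bar{k}_{1,2}}\cdots\beta_{n-1}^{2\bar{k}_{1,n-1}}\rho^{2\bar{l}_1}$; and $A_2$ is the abelian part of the length-$(2n-2)$ product $s_*(\tau_1)s_*(\tau_2)\cdots s_*(\tau_{n-1})^2\cdots s_*(\tau_1)$, obtained by pushing each factor $v_i$ rightwards through the remaining $\tau_j$-letters. Each such push applies the transposition $(j,j+1)$ to the indices of the $\beta_k$'s inside $v_i$, and any occurrence of $\beta_n$ has to be re-expressed via $\beta_n=\beta_1^{-1}\cdots\beta_{n-1}^{-1}\rho^{-2}$.

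Comparing exponents in $A_2=\beta_1^m A_1$ coordinate by coordinate gives: on the $\rho$-slot, $l_1+l_2+\cdots+l_{n-1}=\bar{l}_1$ (since $\tau_i$'s commute with $\rho$); on the $\beta_k$-slots for $k\neq 1$ and on $\sigma$, linear identities in the unknown exponents; and on the $\beta_1$-slot, a single equation in which $m$ occurs explicitly as an additive term. The remaining relations R1--R5 are treated in the same way and yield further constraints; in particular R1 (for $j\neq i,i+1$) forces many of the $l_i$'s and the $\bar{k}_{j,k}$'s to vanish, R2 ties the $\bar{w}_j$'s to the $v_i$'s via $q_i=\tau_iq_{i+1}\tau_i$, and R3, R4 produce further symmetries. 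Substituting these constraints back into the $\beta_1$-equation from R6 eliminates the auxiliary exponents and the surviving relation expresses $m$ as $(n-1)$ times an integer, equivalently $m=k(n-1)$ for some $k\geq 1$, as required.

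The main obstacle is the combinatorial bookkeeping in the expansion of $A_2$: one must track how the composition of transpositions arising from the successive $\tau_i$-conjugations shuffles the indices of the $\beta$-exponents, and how many times the substitution $\beta_n=\beta_1^{-1}\cdots\beta_{n-1}^{-1}\rho^{-2}$ must be invoked, since it is precisely this substitution, fired once for each of the $n-1$ "index-$n$ excursions" of the relevant cyclic permutation, that is responsible for producing the divisor $n-1$ in the final equation.
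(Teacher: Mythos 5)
Your proposal follows essentially the same route as the paper's proof: pass to the quotient by $\Gamma_2(\beta_{n,m})$, parametrise the induced section $s_*$ on the generators, push the six relations R1--R6 through $s_*$ and compare exponents in $\beta_{n,m}/\Gamma\cong\mathbb{Z}^n\times\mathbb{Z}_2$, with R6 (rewritten as $q_1^2=(\tau_1\cdots\tau_{n-1}^2\cdots\tau_1)\beta_1^m$) supplying the one equation in which $m$ appears and R1--R5 eliminating the auxiliary exponents to leave $m=(n-1)(k_{1,1}+k_{1,2})$. The only point your sketch omits is that R5 is vacuous for $n=3$, so that case requires the separate (but easy) verification the paper carries out at the end.
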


\begin{proof}
	Based on the above discussion and on the assumption that the short exact sequence \eqref{new} splits, we will examine the relations \textbf{R1.-R6.}, which hold in $B_{n,m}(\mathbb{R}P^2)/\Gamma$, from which we will deduce that $m$ is a multiple of $(n-1)$.
	
	We start with the relation \textbf{R1.} where $s_*(\tau_iq_j) = s_*(q_j\tau_i)$, for $j\neq i, i+1,\ 1\leq i\leq n-1$ and $1\leq j\leq n$. We focus on the case where $i = n-1$ and $1\leq j \leq n-2$. Using the formulas \eqref{imageoft}, \eqref{imageofq} and the above set of relations \ref{enum1}-\ref{enum2} we have the following:
	
	\begin{flalign*}
	s_*(\tau_{n-1}q_j) &= \tau_{n-1}\beta_1^{k_{n-1,1}}\cdots \beta_j^{k_{n-1,j}}\cdots\beta_{n-1}^{k_{n-1,n-1}}\rho^{l_{n-1}}\sigma^{m_{n-1}}q_j\beta_1^{\bar{k}_{j,1}}\cdots \beta_j^{\bar{k}_{j,j}}\cdots\beta_{n-1}^{\bar{k}_{j,n-1}}\rho^{\bar{l}_j}\sigma^{\bar{m}_j} &\\ 
	&=\tau_{n-1}q_j\beta_1^{k_{n-1,1}}\cdots \beta_j^{-k_{n-1,j}}\cdots\beta_{n-1}^{k_{n-1,n-1}}\beta_j^{l_{n-1}}\rho^{l_{n-1}}\sigma^{m_{n-1}}\beta_1^{\bar{k}_{j,1}}\cdots \beta_j^{\bar{k}_{j,j}}\cdots\beta_{n-1}^{\bar{k}_{j,n-1}}\rho^{\bar{l}_j}\sigma^{\bar{m}_j}&\\
	&= \tau_{n-1}q_j\beta_1^{k_{n-1,1} + \bar{k}_{j,1}}\cdots\beta_j^{-k_{n-1,j}+ l_{n-1} + \bar{k}_{j,j}}\cdots\beta_{n-1}^{k_{n-1,n-1} + \bar{k}_{j,n-1}}\rho^{l_{n-1} + \bar{l_j}}\sigma^{m_{n-1} + \bar{m_j}}
	\end{flalign*}
	and
	\begin{flalign*}
	s_*(q_j\tau_{n-1}) &= q_j\beta_1^{\bar{k}_{j,1}}\cdots \beta_j^{\bar{k}_{j,j}}\cdots\beta_{n-1}^{\bar{k}_{j,n-1}}\rho^{\bar{l}_j}\sigma^{\bar{m}_j} \tau_{n-1}\beta_1^{k_{n-1,1}}\cdots \beta_j^{k_{n-1,j}}\cdots\beta_{n-1}^{k_{n-1,n-1}}\rho^{l_{n-1}}\sigma^{m_{n-1}}&\\
	&= q_j\tau_{n-1}\beta_1^{\bar{k}_{j,1}}\cdots \beta_j^{\bar{k}_{j,j}}\cdots\beta_{n-2}^{\bar{k}_{j,n-2}}\beta_{n}^{\bar{k}_{j,n-1}}\rho^{\bar{l}_j}\sigma^{\bar{m}_j}\beta_1^{k_{n-1,1}}\cdots \beta_j^{k_{n-1,j}}\cdots\beta_{n-1}^{k_{n-1,n-1}}\rho^{l_{n-1}}\sigma^{m_{n-1}}&\\
	&= q_j\tau_{n-1}\beta_1^{\bar{k}_{j,1}}\cdots \beta_j^{\bar{k}_{j,j}}\cdots\beta_{n-2}^{\bar{k}_{j,n-2}}(\beta_{1}^{-\bar{k}_{j,n-1}}\cdots \beta_j^{-\bar{k}_{j,n-1}}\cdots\beta_{n-1}^{-\bar{k}_{j,n-1}}\rho^{-2\bar{k}_{j,n-1}})\rho^{\bar{l}_j}\sigma^{\bar{m}_j}&\\
	&\quad \beta_1^{k_{n-1,1}}\cdots \beta_j^{k_{n-1,j}}\cdots\beta_{n-1}^{k_{n-1,n-1}}\rho^{l_{n-1}}\sigma^{m_{n-1}}&\\
	&= q_j\tau_{n-1}\beta_1^{\bar{k}_{j,1}-\bar{k}_{j,n-1}+k_{n-1,1}} \cdots\beta_j^{\bar{k}_{j,j}-\bar{k}_{j,n-1}+k_{n-1,j}}\cdots\beta_{n-2}^{\bar{k}_{j,n-2}-\bar{k}_{j,n-1}+k_{n-1,n-2}}\\
	&\quad\beta_{n-1}^{-\bar{k}_{j,n-1}+k_{n-1,n-1}}
	\rho^{\bar{l}_j-2\bar{k}_{j,n-1}+l_{n-1}}\sigma^{\bar{m_j}+ m_{n-1}}.
	\end{flalign*}
	Comparing the coefficients of the elements $\rho$ and $\beta_{j}$ of $B_{n,m}(\mathbb{R}P^2)/\Gamma$ in these two equations, we obtain the following:
	\begin{equation}\label{8}
	\bar{k}_{j,n-1} = 0,\ \text{for}\ 1\leq j \leq n-2.
	\end{equation}
	\begin{equation}\label{9}
	l_{n-1} = 2k_{n-1,j}, \ \text{for}\ 1\leq j \leq n-2.
	\end{equation}
	In the case where $j = n$ and $1\leq i < n-1$ we have the following:
	\begin{flalign*}
	s_*(\tau_iq_n) &= \tau_i\beta_1^{k_{i,1}}\cdots \beta_i^{k_{i,i}}\beta_{i+1}^{k_{i,i+1}}\cdots\beta_{n-1}^{k_{i,n-1}}\rho^{l_i}\sigma^{m_i}q_n\beta_1^{\bar{k}_{n,1}}\cdots\beta_i^{\bar{k}_{n,i}}\beta_{i+1}^{\bar{k}_{n,i+1}}\cdots\beta_{n-1}^{\bar{k}_{n,n-1}}\rho^{\bar{l}_n}\sigma^{\bar{m}_n} &\\ 
	&=\tau_iq_n\beta_1^{k_{i,1}}\cdots \beta_i^{k_{i,i}}\beta_{i+1}^{k_{i,i+1}}\cdots\beta_{n-1}^{k_{i,n-1}}\beta_n^{l_i}\rho^{l_i}\sigma^{m_i}\beta_1^{\bar{k}_{n,1}}\cdots\beta_i^{\bar{k}_{n,i}}\beta_{i+1}^{\bar{k}_{n,i+1}}\cdots\beta_{n-1}^{\bar{k}_{n,n-1}}\rho^{\bar{l}_n}\sigma^{\bar{m}_n}&\\
	&= \tau_iq_n\beta_1^{k_{i,1}-l_i + \bar{k}_{n,1}}\cdots\beta_i^{k_{i,i}-l_i + \bar{k}_{n,i}}\beta_{i+1}^{k_{i,i+1}-l_i + \bar{k}_{n,i+1}}\cdots\beta_{n-1}^{k_{i,n-1}-l_i + \bar{k}_{n,n-1}}\rho^{l_i-2l_i + \bar{l}_n}\sigma^{m_i + \bar{m}_n}
	\end{flalign*}
	and
	\begin{flalign*}
	s_*(q_n\tau_i) &= q_n\beta_1^{\bar{k}_{n,1}}\cdots\beta_i^{\bar{k}_{n,i}}\beta_{i+1}^{\bar{k}_{n,i+1}}\cdots\beta_{n-1}^{\bar{k}_{n,n-1}}\rho^{\bar{l}_n}\sigma^{\bar{m}_n}  \tau_i\beta_1^{k_{i,1}}\cdots \beta_i^{k_{i,i}}\beta_{i+1}^{k_{i,i+1}}\cdots\beta_{n-1}^{k_{i,n-1}}\rho^{l_i}\sigma^{m_i}&\\
	&= q_n\tau_i\beta_1^{\bar{k}_{n,1}}\cdots\beta_{i+1}^{\bar{k}_{n,i}}\beta_i^{\bar{k}_{n,i+1}}\cdots\beta_{n-1}^{\bar{k}_{n,n-1}}\rho^{\bar{l}_n}\sigma^{\bar{m}_n}\beta_1^{k_{i,1}}\cdots \beta_i^{k_{i,i}}\beta_{i+1}^{k_{i,i+1}}\cdots\beta_{n-1}^{k_{i,n-1}}\rho^{l_i}\sigma^{m_i}&\\
	&= q_n\tau_i\beta_1^{\bar{k}_{n,1}+k_{i,1}}\cdots\beta_i^{\bar{k}_{n,i+1}+k_{i,i}}\beta_{i+1}^{\bar{k}_{n,i}+k_{i,i+1}}\cdots\beta_{n-1}^{\bar{k}_{n,n-1}+k_{i,n-1}}\rho^{\bar{l}_n+l_i}\sigma^{\bar{m}_n+ m_i}.
	\end{flalign*}
	Comparing the coefficients of the element $\beta_{1}$ of $B_{n,m}(\mathbb{R}P^2)/\Gamma$ in these two equations, we obtain the following:
	\begin{equation}\label{10}
	l_i = 0,\ \text{for}\ 1\leq i \leq n-2.
	\end{equation}
	
	We continue with \textbf{R2.} for which $s_*(q_i) = s_*(\tau_iq_{i+1}\tau_i)$, for $1\leq i\leq n-1$.
	We will discuss the case $i = n-1$ separately. So, for $1\leq i \leq n-2$ we have from \eqref{10} that $l_i=0$, and once again, based on the formulas \eqref{imageoft}, \eqref{imageofq} and the set of relations \ref{enum1}-\ref{enum2} we have the following:\\
	$ s_*(q_i)= q_i\beta_1^{\bar{k}_{i,1}}\cdots\beta_i^{\bar{k}_{i,i}}\beta_{i+1}^{\bar{k}_{i,i+1}}\cdots \beta_{n-1}^{\bar{k}_{i,n-1}}\rho^{\bar{l}_i}\sigma^{\bar{m}_i}$
	\\\\and
	\begin{align*}
	s_*(\tau_iq_{i+1}\tau_i)&=\tau_iq_{i+1}\tau_i\beta_1^{2k_{i,1}+\bar{k}_{i+1,1}}\cdots\beta_i^{-k_{i,i+1}+\bar{k}_{i+1,i+1}+k_{i,i}}\beta_{i+1}^{k_{i,i}+\bar{k}_{i+1,i}+k_{i,i+1}}\cdots\beta_{n-1}^{2k_{i,n-1}+\bar{k}_{i+1,n-1}}\rho^{\bar{l}_{i+1}}&\\
	&\quad\sigma^{\bar{m}_{i+1}}.
	\end{align*}
	Note that $\sigma^{2m_i}=1$, since $\sigma$ is an element of order two. 
	Comparing the coefficients of the elements $\sigma$ and $\rho$ of $B_{n,m}(\mathbb{R}P^2)/\Gamma$ in these two equations, we obtain the following:
	\begin{equation}\label{12}
	\bar{m}_i = \bar{m}_{i+1},\ \text{for}\ 1\leq i \leq n-2.
	\end{equation}
	\begin{equation}\label{13}
	\bar{l}_i = \bar{l}_{i+1},\ \text{for}\ 1\leq i \leq n-2.
	\end{equation}
	In the case where $i=n-1$ we have the following:\\
	$ s_*(q_{n-1})= q_{n-1}\beta_1^{\bar{k}_{n-1,1}}\cdots\beta_i^{\bar{k}_{n-1,i}}\cdots \beta_{n-1}^{\bar{k}_{n-1,n-1}}\rho^{\bar{l}_{n-1}}\sigma^{\bar{m}_{n-1}}$
	\\\\and
	\begin{flalign*}
	s_*(\tau_{n-1}q_n\tau_{n-1}) &=\tau_{n-1}\beta_{1}^{k_{{n-1},1}}\cdots\beta_{n-1}^{k_{{n-1},n-1}}
	\rho^{l_{n-1}}
	\sigma^{m_{n-1}}q_n\beta_1^{\bar{k}_{n,1}}\cdots\beta_{n-1}^{\bar{k}_{n,n-1}}\rho^{\bar{l}_{n}}\sigma^{\bar{m}_{n}}&\\
	& \quad \tau_{n-1}\beta_{1}^{k_{{n-1},1}}\cdots\beta_{n-1}^{k_{{n-1},n-1}}
	\rho^{l_{n-1}}
	\sigma^{m_{n-1}}.
	%&=\tau_{n-1}q_n\beta_{1}^{k_{{n-1},1}}\cdots\beta_{n-1}^{k_{{n-1},n-1}}\beta_n^{l_{n-1}}
	%\rho^{l_{n-1}}
	%\sigma^{m_{n-1}}\tau_{n-1}\beta_1^{\bar{k}_{n,1}}\cdots\beta_{n}^{\bar{k}_{n,n-1}}\rho^{\bar{l}_{n}}\sigma^{\bar{m}_{n}}&\\
	%&\quad \beta_{1}^{k_{{n-1},1}}\cdots\beta_{n-1}^{k_{{n-1},n-1}}
	%\rho^{l_{n-1}}
	%\sigma^{m_{n-1}}&\\
	%&=\tau_{n-1}q_n\tau_{n-1}\beta_{1}^{k_{{n-1},1}}\cdots\beta_n^{k_{{n-1},n-1}}\beta_{n-1}^{l_{n-1}}
	%\rho^{l_{n-1}}
	%\sigma^{m_{n-1}}\beta_1^{\bar{k}_{n,1}}\cdots\beta_{n}^{\bar{k}_{n,n-1}}\rho^{\bar{l}_{n}}\sigma^{\bar{m}_{n}}&\\
	%&\quad \beta_{1}^{k_{{n-1},1}}\cdots\beta_{n-1}^{k_{{n-1},n-1}}
	%\rho^{l_{n-1}}\sigma^{m_{n-1}}&\\
	%&=\tau_{n-1}q_n\tau_{n-1}\beta_{1}^{2k_{{n-1},1}+\bar{k}_{n,1}-k_{n-1,n-1}-\bar{k}_{n,n-1}}
	%\cdots\beta_{n-1}^{l_{n-1}-\bar{k}_{n,n-1}}
	%\rho^{2l_{n-1}+\bar{l}_n-2k_{n-1,n-1}-2\bar{k}_{n,n-1}}\sigma^{\bar{m}_n}.
	\end{flalign*}
	We know from the set of relations \ref{enum1}-\ref{enum2} that the element $\sigma$ commutes with the other elements that appear in the second equation. This implies that the coefficient of $\sigma$ in the second equation is $\bar{m}_n$, since $\sigma^2=1$. Thus, comparing the coefficients of the element $\sigma$ of $B_{n,m}(\mathbb{R}P^2)/\Gamma$ in these two equations, we obtain:
	\begin{equation}\label{17}
	\bar{m}_n = \bar{m}_{n-1}.
	\end{equation}
	
	We continue with relation \textbf{R3.} where $s_*(\tau_i\tau_{i+1}\tau_i) = s_*(\tau_{i+1}\tau_i\tau_{i+1})$, for $1\leq i< n-1.$
	We will discuss the case $i = n-2$ separately. So, for $1\leq i < n-2$ we have from \eqref{10} that $l_i=0$, and once again, based on the formulas \eqref{imageoft}, \eqref{imageofq} and the set of relations \ref{enum1}-\ref{enum2} we have the following:
	\begin{flalign*}
	s_*(\tau_i\tau_{i+1}\tau_i)
	&=\tau_i\tau_{i+1}\tau_i\beta_1^{2k_{i,1}+k_{i+1,1}}\cdots\beta_{i-1}^{2k_{i,i-1}+k_{i+1,i-1}}\beta_i^{k_{i,i+2}+k_{i+1,i+1}+k_{i,i}}
	\beta_{i+1}^{k_{i,i}+k_{i+1,i}+k_{i,i+1}}&\\
	&\quad\beta_{i+2}^{k_{i,i+1}+k_{i+1,i+2}+{k_{i,i+2}}}
	\beta_{i+3}^{2k_{i,i+3}+k_{i+1,i+3}} \cdots\beta_{n-1}^{2k_{i,n-1}+k_{i+1,n-1}}
	\sigma^{m_{i+1}}
	\end{flalign*}
	and
	\begin{flalign*}
	s_*(\tau_{i+1}\tau_i\tau_{i+1})&=\tau_{i+1}\tau_i\tau_{i+1}\beta_1^{2k_{i+1,1}+k_{i,1}}\cdots\beta_{i-1}^{2k_{i+1,i-1}+k_{i,i-1}}
	\beta_i^{k_{i+1,i+1}+k_{i,i}+k_{i+1,i}}\beta_{i+1}^{k_{i+1,i+2}+k_{i,i+2}+k_{i+1,i+1}}\\
&\quad	\beta_{i+2}^{k_{i+1,i}+k_{i,i+1}+k_{i+1,i+2}}
	\beta_{i+3}^{2k_{i+1,i+3}+k_{i,i+3}}\cdots\beta_{n-1}^{2k_{i+1,n-1}+k_{i,n-1}}\sigma^{m_i}.
	\end{flalign*}
	Comparing the coefficients of the element $\beta_{i}$ of $B_{n,m}(\mathbb{R}P^2)/\Gamma$ in these two equations, it follows that $k_{i+1, i}=k_{i,i+2}$, for $1\leq i < n-2$. Using this relation and comparing the coefficients of the element $\beta_{i+1}$ of $B_{n,m}(\mathbb{R}P^2)/\Gamma$ in these two equations, it follows that $k_{i, i}+k_{i, i+1} = k_{i+1,i+2} + k_{i+1,i+1}$, for $1\leq i < n-2$. Moreover, comparing the coefficients of $\sigma$ we obtain $m_i = m_{i+1}$, for $1\leq i < n-2$. Thus, we have the following:
	\begin{equation}\label{21}
	m_i = m_{i+1},\ \text{for}\ 1\leq i < n-2.
	\end{equation}
	\begin{equation}\label{24}
	k_{i, i}+k_{i, i+1} = k_{i+1,i+2} + k_{i+1,i+1},\ \text{for}\ 1\leq i < n-2.
	\end{equation}
	In the case where $i=n-2$ we have the following:
	\begin{flalign*}
	s_*(\tau_{n-2}\tau_{n-1}\tau_{n-2}) = &\tau_{n-2}\beta_1^{k_{n-2,1}}\cdots\beta_{n-2}^{k_{n-2,n-2}}\beta_{n-1}^{k_{n-2,n-1}}\sigma^{m_{n-2}}
	\tau_{n-1}\beta_1^{k_{n-1,1}}\cdots\beta_{n-2}^{k_{n-1,n-2}}\beta_{n-1}^{k_{n-1,n-1}}\rho^{l_{n-1}}&\\
	&\sigma^{m_{n-1}}\tau_{n-2}\beta_1^{k_{n-2,1}}\cdots\beta_{n-2}^{k_{n-2,n-2}}\beta_{n-1}^{k_{n-2,n-1}}\sigma^{m_{n-2}}
	%&\\
	%&=\tau_{n-2}\tau_{n-1}\beta_1^{k_{n-2,1}}\cdots\beta_{n-2}^{k_{n-2,n-2}}\beta_n^{k_{n-2,n-1}}\sigma^{m_{n-2}}\tau_{n-2}
	%\beta_1^{k_{n-1,1}}\cdots\beta_{n-1}^{k_{n-1,n-2}}\beta_{n-2}^{k_{n-1,n-1}}\rho^{l-1}\sigma^{m_{n-1}}&\\
	%&\beta_1^{k_{n-2,1}}\cdots\beta_{n-2}^{k_{n-2,n-2}}\beta_{n-1}^{k_{n-2,n-1}}\sigma^{m_{n-2}}&\\
	%&=\tau_{n-2}\tau_{n-1}\tau_{n-2}\beta_1^{k_{n-2,1}}\cdots\beta_{n-1}^{k_{n-2,n-2}}\beta_n^{k_{n-2,n-1}}\sigma^{m_{n-2}}
	%\beta_1^{k_{n-1,1}}\cdots\beta_{n-1}^{k_{n-1,n-2}}\beta_{n-2}^{k_{n-1,n-1}}\rho^{l-1}\sigma^{m_{n-1}}&\\
	%&\beta_1^{k_{n-2,1}}\cdots\beta_{n-2}^{k_{n-2,n-2}}\beta_{n-1}^{k_{n-2,n-1}}\sigma^{m_{n-2}}&\\
	%&=\tau_{n-2}\tau_{n-1}\tau_{n-2}\beta_1^{2k_{n-2,1}+k_{n-1,1}-k_{n-2, n-1}}\cdots\beta_{n-2}^{k_{n-1,n-1}+k_{n-2,n-2}-k_{n-2,n-1}}&\\
	%&\beta_{n-1}^{k_{n-2,n-2}+k_{n-1,n-2}}\rho^{l_{n-1}-2k_{n-2,n-1}}
	%\sigma^{m_{n-1}}.
	\end{flalign*}
	and
	\begin{flalign*}
	s_*(\tau_{n-1}\tau_{n-2}\tau_{n-1}) = 
	&\tau_{n-1}\beta_1^{k_{n-1,1}}\cdots\beta_{n-2}^{k_{n-1,n-2}}\beta_{n-1}^{k_{n-1,n-1}}\rho^{l_{n-1}}\sigma^{m_{n-1}}\tau_{n-2}\beta_1^{k_{n-2,1}}\cdots\beta_{n-2}^{k_{n-2,n-2}}
	\beta_{n-1}^{k_{n-2,n-1}}&\\
	&\sigma^{m_{n-2}}\tau_{n-1}\beta_1^{k_{n-1,1}}\cdots\beta_{n-2}^{k_{n-1,n-2}}\beta_{n-1}^{k_{n-1,n-1}}
	\rho^{l_{n-1}}\sigma^{m_{n-1}}.
	%&\\
	%&=\tau_{n-1}\tau_{n-2}\tau_{n-1}\beta_1^{2k_{n-1,1}+k_{n-2,1}-k_{n-2, n-1}-k_{n-1, n-2}}\cdots\beta_{n-2}^{k_{n-1,n-1}+k_{n-2,n-2}-k_{n-2,n-1}}&\\
	%&\beta_{n-1}^{k_{n-1,n-1}-k_{n-2,n-1}-k_{n-1,n-2}}\rho^{2l_{n-1}-2k_{n-2,n-1}-2k_{n-1,n-2}}
	%\sigma^{m_{n-2}}.
	\end{flalign*}
	Since $\sigma$ commutes with the other elements that appear in these two equations, it follows that the coefficient of $\sigma$ in the first equation is $m_{n-1}$ and in second equation is $m_{n-2}$, because $\sigma^2=1$. Thus, we obtain the following:
	\begin{equation}\label{25}
	m_{n-1} = m_{n-2}.
	\end{equation}
	
	We continue with the relation \textbf{R4.} where $s_*(\tau_i^2) = s_*(q_{i+1}^{-1}q_{i}^{-1}q_{i+1}q_{i})$, for $1\leq i\leq n-1$. From \eqref{12} and \eqref{17}, we have $\bar{m}_1=\bar{m}_2=\dots=\bar{m}_{n-1}=\bar{m}_n=: \bar{M}$ and from \eqref{21} and \eqref{25} $m_1=m_2=\dots=m_{n-2}=m_{n-1}:=M$. From the set of relations \ref{enum1}-\ref{enum2} we have $\rho^k q_j = q_j\beta_j^k\rho^k$, and thus we obtain $\rho^kq_j^{-1}=q^{-1}_j\beta_j^{k}\rho^k$ and $\rho^{-k}q_j^{-1}=q^{-1}_j\beta_j^{-k}\rho^{-k}$, for $k\in \mathbb{Z},$ and $1\leq j \leq n$.  Moreover, using \eqref{10} and \eqref{13}, we obtain the following, for $1\leq i < n-1$:
	$$s_*(\tau_i^2)=\tau_i^2\beta_1^{2k_{i,1}}\cdots\beta_{i-1}^{2k_{i,i-1}}\beta_{i}^{k_{i,i+1}+k_{i,i}}\beta_{i+1}^{k_{i,i}+k_{i,i+1}}\beta_{i+2}^{2k_{i,i+2}}\cdots\beta_{n-1}^{2k_{i,n-1}}$$
	and
	$$s_*(q_{i+1}^{-1}q_{i}^{-1}q_{i+1}q_{i})=q_{i+1}^{-1}q_{i}^{-1}q_{i+1}q_{i}\beta_{1}^0\cdots \beta_{i-1}^{0}\beta_{i}^{-2\bar{k}_{i+1,i}+\bar{l}_{i}}\beta_{i+1}^{2\bar{k}_{i,i+1}-\bar{l}_{i}}\beta_{i+2}^{0}\cdots
	\beta_{n-1}^0\rho^0.$$
	Comparing the coefficients of $\beta_{1},\dots,\beta_{i-1},\beta_{i+2},\dots\beta_{n-1}$ and $\beta_{i+1}$ of $B_{n,m}(\mathbb{R}P^2)/\Gamma$ in these two equations, we obtain:
	\begin{equation}\label{29}
	k_{i,s}=0,\ \text{for}\ s=1, 2, \dots, n-1, \ s\neq i, i+1,\ 1\leq i\leq n-2.
	\end{equation}
	\begin{equation}\label{31}
	k_{i,i}+k_{i, i+1} = -\bar{l}_i+2\bar{k}_{i, i+1} ,\ 1\leq i \leq n-2.
	\end{equation}
	
	 In the last but one relation \textbf{R5.}, where $s_*(\tau_i\tau_j) = s_*(\tau_j\tau_i)$, for $|i-j|>1$ and $1\leq i,j\leq n-1,$ we will focus on the case where $i = n-1$ and $1\leq j\leq n-3$. From now on we suppose that $n\geq 4$, so that the relation that we will use exists.
	Thus, for these values, using \eqref{10} and \eqref{21}, we have the following:
	$$s_*(\tau_{n-1}\tau_j)=\tau_{n-1}\tau_j\beta_1^{k_{n-1,1}+k_{j,1}}\cdots\beta_{j}^{k_{n-1,j+1}+k_{j,j}}
	\beta_{j+1}^{k_{n-1,j}+k_{j,j+1}}\cdots\beta_{n-1}^{k_{n-1,n-1}+k_{j,n-1}}\rho^{l_{n-1}}$$
and
	\begin{flalign*}
	s_*(\tau_j\tau_{n-1})
	&=\tau_j\tau_{n-1}\beta_1^{k_{j,1}+k_{n-1,1}-k_{j,n-1}}\cdots\beta_{j}^{k_{j,j}+k_{n-1,j}-k_{j,n-1}}
	&\\
	&\beta_{j+1}^{k_{j,j+1}+k_{n-1,j+1}-k_{j,n-1}}\cdots \beta_{n-2}^{k_{j,n-2}+k_{n-1,n-2}-k_{j,n-1}}\quad \beta_{n-1}^{k_{n-1,n-1}-k_{j,n-1}}
	\rho^{l_{n-1}-2k_{j,n-1}}.
	\end{flalign*}
	Comparing the coefficients of the element $\rho$ of $B_{n,m}(\mathbb{R}P^2)/\Gamma$ in these two equations, it follows that $k_{j,n-1}=0$, for $1\leq j \leq n-3$. Using this result and comparing the coefficients of $\beta_{i}$ in these two equations, we obtain the following:
	\begin{equation}\label{37}
	k_{n-1, j} = k_{n-1, j+1},\ \text{for}\ 1\leq j \leq n-3.
	\end{equation}
	
	We now analyse the results that we obtained so far. From \eqref{21}, \eqref{25} we obtain $m_1=m_2=\dots=m_{n-2}=m_{n-1}=:M$. From \eqref{10} we get $l_1=l_2=\dots=l_{n-2}=0$. From \eqref{9} we get $l_{n-1}=2k_{n-1,j}$, for $1\leq j\leq n-2$. From \eqref{13} it follows that $\bar{l}_1=\bar{l}_2=\dots=\bar{l}_{n-2}=\bar{l}_{n-1}=:\bar{l}$. From \eqref{29} we get $k_{i,s}=0$, for $1\leq i \leq n-2$ and $s=1,2,\dots, n-1, s\neq i, i+1$. That means that for $1\leq i \leq n-2$ and $s=1,2,\dots, n-1$, the coefficients $k_{i,s}$ are zero except for $k_{i,i}$ and $k_{i,i+1}$. Finally, from \eqref{37} we have $k_{n-1,1}=k_{n-1,2}=\dots=k_{n-1,n-3}=k_{n-1,n-2}:=\pi$. Gathering together these results, the image of the elements $\tau_i$, for $1\leq i\leq n-1$ under the section $s$ is as follows:
	$$s_*(\tau_i)=\tau_i\beta_i^{k_{i,i}}\beta_{i+1}^{k_{i,i+1}}\sigma^M,\ \text{for}\ 1\leq i \leq n-2$$
	and
	$$s_*(\tau_{n-1})=\tau_{n-1}\beta_1^\pi\beta_2^\pi\cdots\beta_{n-2}^\pi\beta_{n-1}^{k_{n-1,n-1}}\rho^{l_{n-1}}\sigma^M.$$ 
	
	Finally, we examine the relation \textbf{R6.}, where $s_*(q_1^2)=s_*(\tau_1\tau_2\cdots\tau_{n-2}\tau_{n-1}^2\tau_{n-2}\cdots\tau_2\tau_1)$.
	\begin{flalign*}
	s_*(\tau_1\tau_2\cdots\tau_{n-2}\tau_{n-1}^2\tau_{n-2}\cdots\tau_2\tau_1) &=
	s_*(\tau_1\tau_2\cdots\tau_{n-2}\tau_{n-1}) 
	s_*(\tau_{n-1}\tau_{n-2}\cdots\tau_2\tau_1)&
	\end{flalign*}
	We will compute $s_*(\tau_1\tau_2\cdots\tau_{n-2}\tau_{n-1})$ and $s_*(\tau_{n-1}\tau_{n-2}\cdots\tau_2\tau_1)$ separately.
	\begin{flalign*}
	s_*(\tau_1\tau_2\cdots\tau_{n-2}\tau_{n-1}) 
	&=\tau_1\beta_1^{k_{1,1}}\beta_2^{k_{1,2}}
	\tau_2\beta_2^{k_{2,2}}\beta_3^{k_{2,3}}\cdots
	\tau_i\beta_i^{k_{i,i}}\beta_{i+1}^{k_{i,i+1}}\cdots
	\tau_{n-2}\beta_{n-2}^{k_{n-2,n-2}}\beta_{n-1}^{k_{n-2,n-1}}&\\
	&\quad\tau_{n-1}\beta_1^\pi\beta_2^\pi\cdots\beta_{n-2}^\pi\beta_{n-1}^{k_{n-1,n-1}}\rho^{l_{n-1}}\sigma^{(n-1)M}&\\
	&=\tau_1\tau_2\beta_1^{k_{1,1}}\beta_3^{k_{1,2}+k_{2,3}}
	\beta_2^{k_{2,2}}\cdots
	\tau_i\beta_i^{k_{i,i}}\beta_{i+1}^{k_{i,i+1}}\cdots
	\tau_{n-2}\beta_{n-2}^{k_{n-2,n-2}}\beta_{n-1}^{k_{n-2,n-1}}&\\
	&\quad\tau_{n-1}\beta_1^\pi\beta_2^\pi\cdots\beta_{n-2}^\pi\beta_{n-1}^{k_{n-1,n-1}}\rho^{l_{n-1}}\sigma^{(n-1)M}&\\
	&=\tau_1\tau_2\cdots\tau_i\beta_1^{k_{1,1}}\beta_{i+1}^{k_{1,2}+k_{2,3}+\cdots+k_{i,i+1}}
	(\beta_2^{k_{2,2}}\cdots
	\beta_i^{k_{i,i}})&\\
	&\quad\tau_{i+1}\beta_{i+1}^{k_{i+1,i+1}}\beta_{i+2}^{k_{i+1,i+2}}\cdots
	\tau_{n-2}\beta_{n-2}^{k_{n-2,n-2}}\beta_{n-1}^{k_{n-2,n-1}}&\\
	&\quad\tau_{n-1}\beta_1^\pi\beta_2^\pi\cdots\beta_{n-2}^\pi\beta_{n-1}^{k_{n-1,n-1}}\rho^{l_{n-1}}\sigma^{(n-1)M}&\\
	&=\tau_1\tau_2\cdots\tau_i\cdots\tau_{n-2}\beta_1^{k_{1,1}}\beta_{n-1}^{k_{1,2}+k_{2,3}+\cdots+k_{i,i+1}+\cdots+k_{n-2,n-1}}
	(\beta_2^{k_{2,2}}\cdots
	\beta_i^{k_{i,i}}\cdots \beta_{n-2}^{k_{n-2,n-2}})&\\
	&\quad\tau_{n-1}\beta_1^\pi\beta_2^\pi\cdots\beta_{n-2}^\pi\beta_{n-1}^{k_{n-1,n-1}}\rho^{l_{n-1}}\sigma^{(n-1)M}&\\
	&=\tau_1\tau_2\cdots\tau_i\cdots\tau_{n-2}\tau_{n-1}
	\beta_1^{k_{1,1}}\beta_n^{k_{1,2}+k_{2,3}+\cdots+k_{i,i+1}+\cdots+k_{n-2,n-1}}&\\
	&\quad(\beta_2^{k_{2,2}}\cdots
	\beta_i^{k_{i,i}}\cdots \beta_{n-2}^{k_{n-2,n-2}}\beta_{n-1}^{k_{n-1,n-1}})
	\beta_1^\pi\beta_2^\pi\cdots\beta_{n-2}^\pi\rho^{l_{n-1}}\sigma^{(n-1)M}.
	\end{flalign*}
	Setting $\lambda:=k_{1,2}+k_{2,3}+\cdots+k_{i,i+1}+\cdots+k_{n-2,n-1}$, we have:
	\begin{flalign*}
	s_*(\tau_1\tau_2\cdots\tau_{n-2}\tau_{n-1}) &=
	\tau_1\tau_2\cdots\tau_i\cdots\tau_{n-2}\tau_{n-1}&\\
	&\quad\beta_1^{k_{1,1}+\pi}\beta_n^\lambda\beta_2^{k_{2,2}+\pi}\cdots
	\beta_i^{k_{i,i}+\pi}\cdots \beta_{n-2}^{k_{n-2,n-2}+\pi}\beta_{n-1}^{k_{n-1,n-1}}\rho^{l_{n-1}}\sigma^{(n-1)M}&\\
	&=\tau_1\tau_2\cdots\tau_{n-2}\tau_{n-1}\beta_1^{k_{1,1}+\pi-\lambda}&\\
	&\quad\beta_2^{k_{2,2}+\pi-\lambda}\cdots
	\beta_i^{k_{i,i}+\pi-\lambda}\cdots \beta_{n-2}^{k_{n-2,n-2}+\pi-\lambda}\beta_{n-1}^{k_{n-1,n-1}-\lambda}\rho^{l_{n-1}-2\lambda}
	\sigma^{(n-1)M}.
	\end{flalign*}
	We continue by computing $s_*(\tau_{n-1}\tau_{n-2}\cdots\tau_2\tau_1)$.
	\begin{flalign*}
	s_*(\tau_{n-1}\tau_{n-2}\cdots\tau_2\tau_1)
	&=\tau_{n-1}\beta_1^\pi\beta_2^\pi\cdots\beta_{n-2}^\pi\beta_{n-1}^{k_{n-1,n-1}}
	\tau_{n-2}\beta_{n-2}^{k_{n-2,n-2}}\beta_{n-1}^{k_{n-2,n-1}}\cdots
	\tau_i\beta_i^{k_{i,i}}\beta_{i+1}^{k_{i,i+1}}\cdots&\\
	&\quad\tau_2\beta_2^{k_{2,2}}\beta_3^{k_{2,3}}
	\tau_1\beta_1^{k_{1,1}}\beta_2^{k_{1,2}}\rho^{l_{n-1}}\sigma^{(n-1)M}&\\
	&=\tau_{n-1}\tau_{n-2}\beta_1^\pi\beta_2^\pi\cdots\beta_{n-1}^{\pi+k_{n-2,n-1}}\beta_{n-2}^{k_{n-1,n-1}+k_{n-2,n-2}}\cdots
	\tau_i\beta_i^{k_{i,i}}\beta_{i+1}^{k_{i,i+1}}\cdots&\\
	&\quad\tau_2\beta_2^{k_{2,2}}\beta_3^{k_{2,3}}
	\tau_1\beta_1^{k_{1,1}}\beta_2^{k_{1,2}}\rho^{l_{n-1}}\sigma^{(n-1)M}&\\
	&=\tau_{n-1}\tau_{n-2}\cdots\tau_i\beta_1^\pi\beta_2^\pi\cdots\beta_{i+1}^{k_{i,i+1}+\pi}\cdots\beta_{n-1}^{\pi+k_{n-2,n-1}}
	\beta_i^{k_{n-1,n-1}+k_{n-2,n-2}+\cdots+k_{i,i}}\cdots&\\
	&\quad\tau_2\beta_2^{k_{2,2}}\beta_3^{k_{2,3}}
	\tau_1\beta_1^{k_{1,1}}\beta_2^{k_{1,2}}\rho^{l_{n-1}}\sigma^{(n-1)M}&\\
	&=\tau_{n-1}\tau_{n-2}\cdots\tau_i\cdots\tau_2\beta_1^\pi\beta_3^{\pi+k_{2,3}}\cdots
	\beta_{i+1}^{\pi+k_{i,i+1}}\cdots\beta_{n-1}^{\pi+k_{n-2,n-1}}
	&\\
	&\quad\beta_2^{k_{n-1,n-1}+k_{n-2,n-2}+\cdots+k_{i,i}\cdots+k_{2,2}}\tau_1\beta_1^{k_{1,1}}\beta_2^{k_{1,2}}\rho^{l_{n-1}}\sigma^{(n-1)M}&\\
	&=\tau_{n-1}\tau_{n-2}\cdots\tau_i\cdots\tau_2\tau_1\beta_2^{\pi+k_{1,2}}\beta_3^{\pi+k_{2,3}}\cdots
	\beta_{i+1}^{\pi+k_{i,i+1}}\cdots\beta_{n-1}^{\pi+k_{n-2,n-1}}&\\
	&\quad
	\beta_1^{k_{n-1,n-1}+k_{n-2,n-2}+\cdots+k_{i,i}\cdots+k_{2,2}+k_{1,1}}\rho^{l_{n-1}}\sigma^{(n-1)M}.
	\end{flalign*}
	We set $\omega:=k_{1,1}+k_{2,2}+\cdots+k_{i,i}+\cdots+k_{n-2,n-2}+k_{n-1,n-1}$. We are now able to compute $s_*(\tau_1\tau_2\cdots\tau_{n-2}\tau_{n-1}^2\tau_{n-2}\cdots\tau_2\tau_1)$:
	\begin{flalign*}
	s_*(\tau_1\tau_2\cdots\tau_{n-2}\tau_{n-1}^2\tau_{n-2}\cdots\tau_2\tau_1) &=
	s_*(\tau_1\tau_2\cdots\tau_{n-2}\tau_{n-1}) 
	s_*(\tau_{n-1}\tau_{n-2}\cdots\tau_2\tau_1)&\\
	&=\tau_1\tau_2\cdots\tau_{n-2}\tau_{n-1}&\\
	&\quad\beta_1^{k_{1,1}+\pi-\lambda}\beta_2^{k_{2,2}+\pi-\lambda}\cdots
	\beta_i^{k_{i,i}+\pi-\lambda}\cdots \beta_{n-2}^{k_{n-2,n-2}+\pi-\lambda}\beta_{n-1}^{k_{n-1,n-1}-\lambda}&\\
	&\quad\tau_{n-1}\tau_{n-2}\cdots\tau_i\cdots\tau_2\tau_1&\\
	&\quad\beta_1^\omega\beta_2^{\pi+k_{1,2}}\beta_3^{\pi+k_{2,3}}\cdots
	\beta_{i}^{\pi+k_{i-1,i}}\cdots\beta_{n-2}^{\pi+k_{n-3,n-2}}\beta_{n-1}^{\pi+k_{n-2,n-1}}
	\rho^{2l_{n-1}-2\lambda}\\
	&=\tau_1\tau_2\cdots\tau_{n-2}\tau_{n-1}^2\tau_{n-2}\cdots\tau_2\tau_1\beta_{1}^{\omega-k_{n-1,n-1}+\lambda} &\\
	&\quad\beta_2^{k_{1,1}+2\pi+k_{1,2}-k_{n-1,n-1}}\cdots
	\beta_{i}^{k_{i-1,i-1}+2\pi+k_{i-1,i}-k_{n-1,n-1}}\cdots&\\ &\quad\beta_{n-2}^{k_{n-3,n-3}+2\pi+k_{n-3,n-2}-k_{n-1,n-1}}
	\beta_{n-1}^{k_{n-2,n-2}+2\pi+k_{n-2,n-1}-k_{n-1,n-1}}&\\
	&\quad\rho^{2l_{n-1}-2k_{n-1,n-1}}.
	\end{flalign*}
	Now:
	\begin{flalign*}
	s_*(q_1^2) &= q_1\beta_1^{\bar{k}_{1,1}}\cdots\beta_{n-1}^{\bar{k}_{1,n-1}}\rho^{\bar{l}}\sigma^{\bar{m}_1}q_1
	\beta_1^{\bar{k}_{1,1}}\cdots\beta_{n-1}^{\bar{k}_{1,n-1}}\rho^{\bar{l}}\sigma^{\bar{m}_1}=q_1^2\beta_{1}^{\bar{l}}\beta_2^{2\bar{k}_{1,2}}\cdots\beta_i^{2\bar{k}_{1,i}}\cdots\beta_{n-1}^{2\bar{k}_{1,n-1}}\rho^{2\bar{l}}\\
	&=(\tau_1\tau_2\cdots\tau_{n-2}\tau_{n-1}^2\tau_{n-2}\cdots\tau_2\tau_1)\beta_{1}^{m+\bar{l}}\beta_2^{2\bar{k}_{1,2}}\cdots\beta_i^{2\bar{k}_{1,i}}\cdots\beta_{n-1}^{2\bar{k}_{1,n-1}}\rho^{2\bar{l}}.
	\end{flalign*}
	We recall that $q_1^2=(\tau_1\tau_2\cdots\tau_{n-2}\tau_{n-1})\beta_n^m(\tau_{n-1}\tau_{n-2}\cdots\tau_2\tau_1)$, given in Proposition \ref{restrictions}, ($II$). The last equality comes from the relation: $$q_1^2=(\tau_1\tau_2\cdots\tau_{n-2}\tau_{n-1})\beta_n^m(\tau_{n-1}\tau_{n-2}\cdots\tau_2\tau_1)=(\tau_1\tau_2\cdots\tau_{n-2}\tau_{n-1})(\tau_{n-1}\tau_{n-2}\cdots\tau_2\tau_1)\beta_1^m$$ that holds in $B_{n,m}(\mathbb{R}P^2)/\Gamma$.
	Comparing the coefficients of the element $\beta_{1}$ of $B_{n,m}(\mathbb{R}P^2)/\Gamma$ in these two equations, we obtain:
	\begin{equation}\label{39}
	m+\bar{l}=\omega+\lambda-k_{n-1,n-1}.
	\end{equation}
	
	From \eqref{39} we have that $m+\bar{l}=\omega+\lambda-k_{n-1,n-1}$. From the definitions of $\omega$ and $\lambda$ we obtain:
	$$m+\bar{l}=(k_{1,1}+k_{1,2})+(k_{2,2}+k_{2,3})+\cdots+(k_{i,i}+k_{i,i+1})+\cdots+(k_{n-2,n-2}+k_{n-2,n-1}).$$
	In addition, from \eqref{24}, we have $k_{i, i}+k_{i, i+1} = k_{i+1,i+1}+k_{i+1,i+2},\ \text{for}\ 1\leq i < n-2$,
	which gives:
	\begin{equation}\label{41}
	k_{1, 1}+k_{1, 2}=k_{2,2} + k_{2,3}=\cdots=k_{n-3, n-3}+k_{n-3, n-2}=k_{n-2, n-2} + k_{n-2,n-1}.
	\end{equation}
	We conclude that:
	\begin{equation}\label{42}
	m+\bar{l}=(n-2)(k_{1,1}+k_{1,2}).
	\end{equation}
	Moreover, from \eqref{31} we have $k_{i,i}+k_{i, i+1} = -\bar{l}_i+2\bar{k}_{i, i+1}$, for $1\leq i \leq n-2$. Letting $i=n-2$, and using \eqref{13} we get:$$k_{n-2,n-2}+k_{n-2, n-1} = -\bar{l}+2\bar{k}_{n-2, n-1}.$$
	However, from \eqref{8} we know that $\bar{k}_{n-2, n-1}=0$, which in turn implies that: 
	\begin{equation}\label{43}
	k_{n-2,n-2}+k_{n-2, n-1} = -\bar{l}.
	\end{equation}
	
	In conclusion, from \eqref{41}, \eqref{42} and \eqref{43} it follows that $m=(n-1)(k_{1,1}+k_{1,2})$ for $n\geq 4$, and therefore that $m=k(n-1)$ for $k\geq1$, since $m$ is a natural number. The conclusion is valid when $n\geq4$ since we have used the relation \textbf{R5.}, $s_*(\tau_i\tau_j) = s_*(\tau_j\tau_i)$ for $|i-j|>1$ and $1\leq i,j\leq n-1,$ which only holds when $n\geq 4$. 
	
	Nevertheless, we can obtain the same result for $n=3$. Indeed, using the results of relations \eqref{9}, \eqref{10}, \eqref{25}, which also hold in the case $n=3$, we obtain $l_2=2k_{2,1},\ l_1=0$ and that $m_1=m_2:=M$, respectively. Thus, $s_*(\tau_{1})=\tau_{1}\beta_{1}^{k_{1,1}}\beta_{2}^{k_{1,2}}\sigma^M$ and 
	$s_*(\tau_{2})=\tau_{2}\beta_1^{k_{2,1}}\beta_{2}^{k_{2,2}}\rho^{2k_{2,1}}\sigma^M$. It holds that $s_*(\tau_{1})$ and $s_*(\tau_{2})$ have the same structure as the equations
	$s_*(\tau_i)=\tau_i\beta_i^{k_{i,i}}\beta_{i+1}^{k_{i,i+1}}\sigma^M$
	and
	$s_*(\tau_{n-1})=\tau_{n-1}\beta_1^\pi\beta_2^\pi\cdots\beta_{n-2}^\pi\beta_{n-1}^{k_{n-1,n-1}}\rho^{l_{n-1}}\sigma^M$, since $\pi=k_{2,1}$ for $n=3$. Therefore, we can indeed apply \eqref{39}, which follows from relation \textbf{R6}, $s_*(q_1^2)=s_*(\tau_1\tau_2\cdots\tau_{n-2}\tau_{n-1}^2\tau_{n-2}\cdots\tau_2\tau_1)$, that also holds for $n=3$. Thus, from \eqref{39} we obtain that $m+\bar{l}=k_{1,1}+k_{1,2}$, which is \eqref{42}, and from \eqref{43}, it follows that $m=2(k_{1,1}+k_{1,2})$, which is the result in the case $n=3$. So if the short exact sequence \eqref{ms} splits then $m=k(n-1)$, for $n\geq 3$ and $k\geq1$.
\end{proof}

We can obtain further restrictions for the value of $m$ by examining the torsion elements of the group $B_n(\mathbb{R}P^2)$ and $B_{n,m}(\mathbb{R}P^2)$. 

\begin{proposition}\label{rtorsion}
	Let $m>1$ and $n\geq 3$.
	If the following short exact sequence
	\begin{equation*}
	\begin{tikzcd}
	1 \ar[r]& B_m(\mathbb{R}P^2\setminus\{x_1,\dots,x_n\}) \ar[r] & B_{n,m}(\mathbb{R}P^2) \ar[r, "\bar{q}"] & B_n(\mathbb{R}P^2)\ar[r] & 1
	\end{tikzcd}
	\end{equation*}
	splits, then either $m\equiv 0\ \textrm{mod}\ n$ or $m\equiv 1\ \textrm{mod}\ n$.
\end{proposition}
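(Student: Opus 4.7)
The plan is to derive the restriction on $m$ from a comparison of torsion orders, exactly as Gon\c{c}alves--Guaschi do in the sphere case. The splitting hypothesis gives an algebraic section $\bar{s}:B_n(\mathbb{R}P^2)\to B_{n,m}(\mathbb{R}P^2)$, and since $\bar{q}\circ\bar{s}=\mathrm{id}$ this section is injective and preserves the orders of finite-order elements. So any torsion element of $B_n(\mathbb{R}P^2)$ of order $N$ produces a torsion element of the same order $N$ inside $B_{n,m}(\mathbb{R}P^2)\subseteq B_{n+m}(\mathbb{R}P^2)$, and comparing the possible torsion orders on the two sides will force a divisibility condition that yields the congruence.

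First, I would recall the torsion element already used in the proof of Theorem \ref{th1}: by [\cite{gonccalves2012surface}, Theorem~5 and Proposition~15], for $n\geq 2$ the dicyclic group
$$Dic_{8n}=\langle a_n,\Delta_n \mid a_n^{4n}=1,\ \Delta_n^2=a_n^{2n},\ \Delta_n a_n\Delta_n^{-1}=a_n^{-1}\rangle$$
embeds in $B_n(\mathbb{R}P^2)$, where $a_n=\sigma_{n-1}^{-1}\cdots\sigma_1^{-1}\rho_1$ has order exactly $4n$. Setting $\alpha:=\bar{s}(a_n)$, the identity $\bar{q}(\alpha^k)=a_n^k$ shows that the order of $\alpha$ in $B_{n,m}(\mathbb{R}P^2)$, and therefore also in $B_{n+m}(\mathbb{R}P^2)$, is exactly $4n$.

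Next, I would invoke the classification of torsion in the braid groups of $\mathbb{R}P^2$: for every $k\geq 2$, the order of any finite-order element of $B_k(\mathbb{R}P^2)$ divides $4k$ or $4(k-1)$ (see [\cite{gonccalves2012surface}] and Murasugi's earlier classification of torsion in surface braid groups). Applied to $k=n+m$, this forces
$$4n \mid 4(n+m) \qquad \text{or} \qquad 4n \mid 4(n+m-1),$$
i.e.\ $n\mid m$ or $n\mid m-1$, which is the announced alternative $m\equiv 0 \pmod n$ or $m\equiv 1 \pmod n$.

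The only delicate step is the second one: one must be sure that a torsion element of $B_{n+m}(\mathbb{R}P^2)$ of order $4n$ cannot exist unless one of the above divisibility conditions holds. This is precisely what the classification rules out, so the argument is short once that ingredient is cited cleanly. Combined with Proposition \ref{rquotient} (which imposes $(n-1)\mid m$), the Chinese remainder theorem will then produce the conclusion of Theorem \ref{th2}, namely $m\equiv 0,(n-1)^2 \pmod{n(n-1)}$.
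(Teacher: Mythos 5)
Your proposal is correct and follows essentially the same route as the paper's proof: use the section to transport an element of order $4n$ from $B_n(\mathbb{R}P^2)$ into $B_{n,m}(\mathbb{R}P^2)\subseteq B_{n+m}(\mathbb{R}P^2)$, then apply the classification of torsion in $B_{n+m}(\mathbb{R}P^2)$ (the paper cites [\cite{gonccalves2004braid}, Theorem~4] for the statement that torsion orders are exactly the divisors of $4(n+m)$ or $4(n+m-1)$) to conclude $n\mid m$ or $n\mid m-1$. The only cosmetic difference is that you realise the order-$4n$ element explicitly via the dicyclic subgroup $Dic_{8n}$, whereas the paper simply quotes the existence statement.
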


\begin{proof}
	From [\cite{gonccalves2004braid}, Theorem 4], the group $B_n(\mathbb{R}P^2)$ contains a torsion element of order $k$ if and only if $k$ divides $4n$ or $4(n-1)$. Similarly, the group $B_{n+m}(\mathbb{R}P^2)$ contains a torsion element of order $l$ if and only if $l$ divides $4(n+m)$ or $4(n-1+m)$.
	
	Suppose that the homomorphism $\bar{q}$ admits a section $\bar{s}: B_n(\mathbb{R}P^2)\to B_{n,m}(\mathbb{R}P^2)$, where $B_{n,m}(\mathbb{R}P^2)\subset B_{n+m}(\mathbb{R}P^2)$. Since $\bar{s}$ is a section, it follows that any torsion element of $B_n(\mathbb{R}P^2)$ is mapped under $\bar{s}$ to a torsion element of $B_{n,m}(\mathbb{R}P^2)$ of the same order. In particular, since $B_n(\mathbb{R}P^2)$ contains a torsion element of order $4n$, whose image under $\bar{s}$ is a torsion element of order $4n$ in $B_{n,m}(\mathbb{R}P^2)$. Therefore, $4n$ divides $4(n+m)$ or $4(n-1+m)$ by [\cite{gonccalves2004braid}, Theorem 4]. Thus, $$4n\ | \ (4n+4m)\ \text{or}\ 4n\ |\ [4(n-1)+4m].$$ 
	As a result $4n+4m=p\cdot 4n$ or $4(n-1)+4m=\bar{p}\cdot 4n$, for $p,\bar{p}\in \mathbb{Z}$. It follows that
	$m=(p-1)\cdot n$ or $m=(\bar{p}-1)\cdot n+1$, or equivalently that either $m\equiv 0\ \textrm{mod}\ n$ or $m\equiv 1\ \textrm{mod}\ n$. 
\end{proof}

\begin{remark}
	Arguing in the same manner for the torsion element of order $4(n-1)$ in $B_n(\mathbb{R}P^2
	)$, it follows that $(n-1)$ divides $(n+m)$ or $[(n-1)+m]$, so $m=l(n-1)+n$ or $m=\bar{l}(n-1)$, for some $l,\bar{l}\in\mathbb{Z}$, but in fact from Proposition \ref{rquotient}, we know that $m=k(n-1)$, which is a stronger result.
\end{remark}

\begin{proof}[Proof of Theorem \ref{th2}]
The proof follows from Propositions \ref{rquotient} and \ref{rtorsion}.
\end{proof}

\section{Construction of a geometric section for the case $n\geq3$}\label{S6}
In this section, we will prove Theorem \ref{th3}. In \cite{MR4185935}, Chen--Salter constructed geometric cross-sections using M\"{o}bius transformations in the case of the 2-sphere, $\mathbb{S}^2$, for some of the cases given by Gon\c{c}alves--Guaschi in \cite{gonccalves2005braid}. We will carry out a similar construction for the case of the projective plane $\mathbb{R}P^2$.

\begin{proposition}\label{construction1}
	Let $n\geq 3$ and $m>1$. The fibration $q: UF_{n,m}(\mathbb{R}P^2)\to UF_n(\mathbb{R}P^2)$ admits a cross-section for $m=kn(2n-1)(2n-2),\ \text{where}\ k\geq1.$
\end{proposition}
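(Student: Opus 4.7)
My plan is to adapt the M\"obius-transformation construction of Chen--Salter on $\mathbb{S}^2$ to $\mathbb{R}P^2$ by lifting configurations through the antipodal double cover $\pi:\mathbb{S}^2\to\mathbb{R}P^2$. Given $X=\{x_1,\dots,x_n\}\in UF_n(\mathbb{R}P^2)$, I would lift it to the antipodally symmetric set $\widetilde X=\{\pm\tilde x_1,\dots,\pm\tilde x_n\}\subset\mathbb{S}^2$ of $2n$ distinct points, and identify $\mathbb{S}^2$ with the Riemann sphere $\widehat{\mathbb{C}}$ by stereographic projection, so that the antipodal involution is realised as $\alpha(z)=-1/\bar z$.

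Next, fix once and for all a set $T=\{t_1,\dots,t_k\}\subset\mathbb{R}\setminus\{0,1\}$ of $k$ distinct real numbers, whence $T=\overline T$. For each ordered triple $(A,B,C)$ of pairwise distinct elements of $\widetilde X$, let $\phi_{A,B,C}$ be the unique M\"obius self-map of $\widehat{\mathbb{C}}$ sending $(A,B,C)$ to $(0,1,\infty)$, and set $S_{A,B,C}:=\phi_{A,B,C}^{-1}(T)$. The $2n(2n-1)(2n-2)$ ordered triples group into $n(2n-1)(2n-2)$ orbits under the involution $(A,B,C)\leftrightarrow(-A,-B,-C)$.

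The decisive step is to verify the antipodal equivariance $S_{-A,-B,-C}=\alpha(S_{A,B,C})$. For this, observe that the composition
$$\psi:=\phi_{-A,-B,-C}\circ\alpha\circ\phi_{A,B,C}^{-1}$$
is an anti-M\"obius self-map of $\widehat{\mathbb{C}}$ fixing each of $0,1,\infty$, which forces $\psi(z)=\bar z$; combined with $T=\overline T$ this gives the equivariance. Consequently, $\widetilde U:=\bigcup_{(A,B,C)}S_{A,B,C}$ is $\alpha$-invariant and projects through $\pi$ to a set of $k$ points per orbit, hence $kn(2n-1)(2n-2)=m$ points in total on $\mathbb{R}P^2$, provided all these contributions are mutually distinct and disjoint from $X$. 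The assignment $X\mapsto X\cup\pi(\widetilde U)$ then depends continuously on $X$, since $\phi_{A,B,C}$ varies continuously with the triple, and produces the desired cross-section of $q:UF_{n,m}(\mathbb{R}P^2)\to UF_n(\mathbb{R}P^2)$.

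I expect the main obstacle to be verifying the distinctness claim uniformly over all $X\in UF_n(\mathbb{R}P^2)$, rather than merely on a generic locus: distinct orbits of triples could conceivably produce coincident contributions, and some $S_{A,B,C}$ could meet $\widetilde X$. Each such coincidence is a polynomial equation in the coordinates of $\widetilde X$ and the entries of $T$; choosing $T$ sufficiently generic (for instance algebraically independent over $\mathbb{Q}$) makes every individual coincidence locus a proper algebraic subvariety of configuration space, and the remaining delicate point is to argue, via a finite dimension count on the collection of possible coincidences and the freedom in adjusting $T$, that a single choice of $T$ simultaneously avoids all of these bad loci, yielding a section defined continuously on the entire base.
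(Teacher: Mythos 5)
Your antipodal-equivariance step is correct (an anti-M\"obius map fixing $0,1,\infty$ is indeed $z\mapsto\bar z$, and $T\subset\mathbb{R}$ then gives $S_{-A,-B,-C}=\alpha(S_{A,B,C})$), and the orbit count $kn(2n-1)(2n-2)$ matches. But the step you defer --- distinctness of all the contributions, uniformly over the whole base --- is not a technicality that genericity of $T$ can repair; it is where the argument breaks. A cross-section must be defined at \emph{every} $X\in UF_n(\mathbb{R}P^2)$, so you need the bad locus to be \emph{empty} for your fixed $T$, not merely a proper subvariety. It cannot be empty: a coincidence such as $\phi_{A,B,C}^{-1}(t_1)=\phi_{A',B',C'}^{-1}(t_2)$ reads $\bigl(\phi_{A',B',C'}\circ\phi_{A,B,C}^{-1}\bigr)(t_1)=t_2$, one complex condition on the configuration, hence a nonempty codimension-$2$ subvariety of the $2n$-dimensional configuration space for essentially every value of $(t_1,t_2)$; as the triples range over a $2n$-parameter family of M\"obius maps, the set of parameters $T$ realised by \emph{some} configuration covers the whole parameter space. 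The dimension count therefore goes the wrong way: the family of configurations is much larger than the freedom you have in $T$, and no single choice of $T$ avoids all collisions. (Letting $T$ depend on $X$ would require a continuous, equivariant, collision-free choice, which is a new section problem of the same difficulty.) A secondary issue is that nothing prevents $S_{A,B,C}$ from meeting $\widetilde X$ itself, which is the same kind of obstruction.

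The paper uses the same M\"obius maps but packages them so that distinctness is automatic by \emph{locality}: for each lifted point $x_i$ it forms the single rational function $R_{x_i}(x)=\prod_{j\neq k}M_{x_i,x_j,x_k}(x)$ of degree $(2n-1)(2n-2)$, whose only zero is $x_i$, with multiplicity equal to the full degree. Consequently the preimage of a sufficiently small regular value $r$ consists of $(2n-1)(2n-2)$ points all trapped in a small disc $\bar D(x_i,m)$ around $x_i$, where $m$ is a third of the minimal pairwise distance; the discs are pairwise disjoint, so points attached to different $x_i$ never collide and never meet $\widetilde X$, and the identity $R_{x_i}(x)=R_{-x_i}(-x)$ gives the antipodal symmetry. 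The factor $k$ is obtained by taking preimages of the nested regular values $r,\tfrac12 r,\dots,\tfrac1k r$. If you want to salvage your scattered-preimage approach, you would need to replace the genericity argument by some such confinement mechanism; as written, the proof has a genuine gap at exactly the point you flagged.
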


\begin{proof}
	Let $n\geq 3$. We consider the two-sheeted covering map $p:\mathbb{S}^2\to\mathbb{R}P^2$. Let $\{y_1,\dots,y_n\}$ be an element of $UF_n(\mathbb{R}P^2)$. By $A_n(\mathbb{S}^2)$ we denote the set of $n$ pairs of antipodal points on $\mathbb{S}^2$, which is a subset of $UF_{2n}(\mathbb{S}^2)$.  Using $p$, $\bigcup\limits_{i=1}^np^{-1}(\{y_i\})=\{x_1,x_2,\dots,x_{2n}\}$, which is an element in $A_n(\mathbb{S}^2)$. So $\{x_1,x_2,\dots,x_{2n}\}$ consists of $n$ pairs of antipodal points of $S^2$. As we will see, we can transform the problem of the construction of a cross-section on the configuration spaces of the projective plane to the construction of a cross-section on the configuration spaces of the 2-sphere. We will generalise a particular construction, using M\"{o}bius transformations, of a cross-section on the configuration space of the 2-sphere, given by Chen--Salter in [\cite{MR4185935}, Remark 3.6], which induces the cabling map $B_n(\mathbb{S}^2)\to B_{n,m}(\mathbb{S}^2)$, given in [\cite{MR4185935}, Theorem B].
	Considering the element $\{x_1,x_2,\dots,x_{2n}\}\in A_n(\mathbb{S}^2)\subset UF_{2n}(\mathbb{S}^2)$, we will construct new distinct points on $\mathbb{S}^2$ around each point $x_i$, for $i=1,\dots,2n$.
	
	Let $(x_p,x_q,x_r)$ be an ordered triple of distinct points of the set of points $\{x_1,x_2,\dots,x_{2n}\}$ in $A_n(\mathbb{S}^2)$. We consider the unique M\"{o}bius transformation of the Riemann sphere, defined by $M_{x_p,x_q,x_r}(x)=\frac{(x-x_p)(x_q-x_r)}{(x-x_r)(x_q-x_p)}$, for $x\in \mathbb{S}^2$, which sends $(x_p,x_q,x_r)$ to $(0,1,\infty)$. For every $x_i\in \{x_1,x_2,\dots,x_{2n}\}$ we define the following map: 
	$$ R_{x_i}(x):= \prod_{j\neq k\in \{1,\dots,2n\}\setminus\{i\}}M_{x_i,x_j,x_k}(x)=\prod_{j\neq k\in \{1,\dots,2n\}\setminus\{i\}}\frac{(x-x_i)(x_j-x_k)}{(x-x_k)(x_j-x_i)}.$$
	
	For every $x_i\in \{x_1,x_2,\dots,x_{2n}\}$, the map $R_{x_i}:\mathbb{S}^2\to\mathbb{S}^2$ is a rational map and in particular it is a product of $(2n-1)(2n-2)$ rational fractions. Thus by the fundamental Theorem of Algebra, the map $R_{x_i}$ is of degree $(2n-1)(2n-2)$. It follows that the preimage of any regular value of $R_{x_i}$ is a set of $(2n-1)(2n-2)$ distinct points. Moreover, by the definition of $R_{x_i}$, $x_i$ is the only zero of $R_{x_i}$.
	
	By the definition of $M_{x_p,x_q,x_r}(x)$ we have $M_{x_p,x_q,x_r}(x)=M_{-x_p,-x_q,-x_r}(-x)$, for every $x\in \mathbb{S}^2$. For simplicity, in what follows we will denote by $i^\prime$, the index $k$ for which $x_k=-x_i$. Moreover, note that for any pair of $j,k$, where $j,k\neq i^\prime$, and for any factor $M_{x_i,x_j,x_k}(x)$ included in the product $\prod\limits_{j\neq k\in \{1,\dots,2n\}\setminus\{i\}}M_{x_i,x_j,x_k}(x)$, the factor $M_{x_i,-x_j,-x_k}(x)$ is included as well, since $x_j, x_k$ belong in a set of exactly $n-1$ pairs of antipodal points. Furthermore, for $j=i^\prime$ (respectively for $k=i^\prime$), for any factor $M_{x_i,-x_i,x_k}(x)$, where $k\in \{1,\dots,2n\}\setminus\{i,i^\prime\}$ (respectively $M_{x_i,x_j,-x_i}(x)$, where $j\in \{1,\dots,2n\}\setminus\{i,i^\prime\}$), included in the product $\prod\limits_{j\neq k\in \{1,\dots,2n\}\setminus\{i\}}M_{x_i,x_j,x_k}(x)$, the factor $M_{x_i,-x_i,-x_k}(x)$ (respectively $M_{x_i,-x_j,-x_i}(x)$), is included as well.
	Similarly, for any pair of $j,k$, where $j,k\neq i$, and for any factor $M_{-x_i,x_j,x_k}(-x)$ included in $\prod\limits_{j\neq k\in \{1,\dots,2n\}\setminus\{i^\prime\}}M_{-x_i,x_j,x_k}(-x)$, the factor $M_{-x_i,-x_j,-x_k}(-x)$ is included as well, since the source of $x_j, x_k$ is a set of exactly $n-1$ pairs of antipodal points. Furthermore, for $j=i$ (respectively for $k=i$), for any factor $M_{-x_i,x_i,x_k}(-x)$, where $k\in \{1,\dots,2n\}\setminus\{i,i^\prime\}$ (respectively $M_{-x_i,x_j,x_i}(-x)$, where $j\in \{1,\dots,2n\}\setminus\{i,i^\prime\}$), included in the product $\prod\limits_{j\neq k\in \{1,\dots,2n\}\setminus\{i\}}M_{-x_i,x_j,x_k}(-x)$, the factor $M_{-x_i,x_i,-x_k}(-x)$ (respectively $M_{-x_i,-x_j,x_i}(-x)$), is included as well.
	 Therefore, 
	\begin{align*}
    R_{x_i}(x)=&\prod\limits_{j\neq k\in \{1,\dots,2n\}\setminus\{i,i^\prime\}}M_{x_i,x_j,x_k}(x)
	\prod\limits_{k\in \{1,\dots,2n\}\setminus\{i,i^\prime\}}M_{x_i,-x_i,x_k}(x)\\
	&\cdot\prod\limits_{j\in \{1,\dots,2n\}\setminus\{i,i^\prime\}}M_{x_i,x_j,-x_i}(x)
	\end{align*}
	and
	\begin{align*}
	R_{-x_i}(-x)=&
	\prod\limits_{j\neq k\in \{1,\dots,2n\}\setminus\{i,i^\prime\}}M_{-x_i,x_j,x_k}(-x)
	\prod\limits_{k\in \{1,\dots,2n\}\setminus\{i,i^\prime\}}M_{-x_i,x_i,x_k}(-x)\\
	&\cdot\prod\limits_{j\in \{1,\dots,2n\}\setminus\{i,i^\prime\}}M_{-x_i,x_j,x_i}(-x).
	\end{align*}
	Thus, from $M_{x_p,x_q,x_r}(x)=M_{-x_p,-x_q,-x_r}(-x)$, we obtain:
	\begin{equation}\label{antipod}
	R_{x_i}(x)=R_{-x_i}(-x),\ \text{for every}\ x\in \mathbb{S}^2.
	\end{equation}
	
	Near each point $x_i\in \{x_1, x_2,\dots,x_{2n}\}$, we will now construct $(2n-1)(2n-2)$ new distinct unordered points. To do so, we consider a small regular value $\epsilon_i$ of $R_{x_i}$ close to zero and we take its preimage. The preimage, $R^{-1}_{x_i}(\epsilon_i)$, will be a set of $(2n-1)(2n-2)$ distinct points, and by continuity close to $x_i$ and different from it. One needs to check that the $(2n-1)(2n-2)$ new distinct points close to the point $x_i$ are also distinct from the $(2n-1)(2n-2)$ new distinct points close to any other point $x_j$, for $j\neq i$. In other words, we have to chose each regular value in such a way that $R^{-1}_{x_i}(\epsilon_i)\cap R^{-1}_{x_j}(\epsilon_j)=\emptyset$, for every $i\neq j\in \{1,\dots, 2n\}$.
	In order to achieve this, we have to refine the choice of the regular value of every $R_{x_i}$, as we now describe.
	
	To begin with, we denote the spherical metric of $\mathbb{S}^2$ by $d:\mathbb{S}^2\times\mathbb{S}^2\to\mathbb{R}^+$, which is the distance function on the sphere based on the central angle between two points, and we set:
	\begin{equation*}
	m:=\frac{1}{3}\min_{i\neq j\in \{1,\dots, 2n\}}d(x_i,x_j).
	\end{equation*}
	Moreover, we can define a map $f: \{x_1, x_2,\dots, x_{2n}\}\to \mathbb{R}^+$, that assigns a positive small number $r_i$ to every element $x_i\in\{x_1, x_2,\dots, x_{2n}\}$, which is a regular value of the map $R_{x_i}$, such that any $x$ with $0<|x|\leq r_i$ is a regular value of $R_{x_i}$ and for which $$R^{-1}_{x_i}(\bar{D}(0,r_i)):=\{x\in\mathbb{S}^2\ |\ R_{x_i}(x)\in \bar{D}(0,r_i)\}\subset \bar{D}(x_i, m),$$ where $\bar{D}(z,a)=\{y\in \mathbb{S}^2\ |\ d(z, y)\leq a \}$. The inclusion $R^{-1}_{x_i}(\bar{D}(0,r_i))\subset \bar{D}(x_i,m)$ follows by continuity of $R_{x_i}$ and by the fact that $x_i$ is the only zero of $R_{x_i}$. Note that we can always find such a regular value $r_i$ of $R_{x_i}$, since the set of critical values of $R_{x_i}$ is finite.
	Finally, we set $r$ to be the smallest value among the $r_i$, where $1\leq i\leq 2n$, that is:
	\begin{equation}\label{radius}
	r:=\min(r_1,\dots,r_{2n})\in\mathbb{R}^{+}.
	\end{equation}
	Therefore, for every point $x_i\in \{x_1,x_2,\dots,x_{2n}\}$ we have defined a rational map $R_{x_i}$ and a small positive number $r$, which is a regular value of $R_{x_i}$ and for which $$R^{-1}_{x_i}(r)=\{\zeta_1^i,\dots,\zeta_{(2n-1)(2n-2)}^i\}\subset \bar{D}(x_i,m),\ \text{for every}\ i\in\{1,\dots,2n\},$$
	where $\zeta_1^i,\dots,\zeta_{(2n-1)(2n-2)}^i$ are pairwise distinct.
	This implies that $R^{-1}_{x_i}(r)\cap R^{-1}_{x_j}(r)=\emptyset$, for every $i\neq j\in \{1,\dots,2n\}.$ In addition, from $\eqref{antipod}$, we have:
	 $$R^{-1}_{-x_i}(r)= \{-\zeta_1^i,\dots,-\zeta_{(2n-1)(2n-2)}^i\}=-R^{-1}_{x_i}(r)= \{\zeta_1^i,\dots,\zeta_{(2n-1)(2n-2)}^i\}.$$ In other words, the $(2n-1)(2n-2)$ new distinct points around $x_i$ are antipodal to the $(2n-1)(2n-2)$ new distinct points around $-x_i$.
	Now, set:
	 $$S_r:=\bigcup\limits_{1\leq i\leq {2n}}R^{-1}_{x_i}(r),$$
	which is the set of the $2n(2n-1)(2n-2)$ new distinct points of $\mathbb{S}^2$ that we constructed, based on the points of the set $\{x_1,x_2,\dots,x_{2n}\}\in A_n(\mathbb{S}^2)$. Note that $S_r\in A_{n(2n-1)(2n-2)}(\mathbb{S}^2)$. 
	
	Note that if $r^{\prime}, r^{\prime\prime}\in \mathbb{R}^+$ satisfy $0<r^{\prime}<r^{\prime\prime}<r$, then by construction, $S_{r^{\prime}}\cap S_{r^{\prime\prime}}=\emptyset$. So if $k\in\mathbb{N}$, in the same way, we may obtain $k\big((2n(2n-1)(2n-2))\big)$ new points on $\mathbb{S}^2$, taking for example $S_r\cup S_{\frac{1}{2}r}\cup\dots\cup S_{\frac{1}{k}r}$, where once again the $k(2n-1)(2n-2)$ new points close to $x_i$ will be antipodal to the $k(2n-1)(2n-2)$ new points close to $-x_i$.
	
	Projecting now the set of points $S_r\cup S_{\frac{1}{2}r}\cup\dots\cup S_{\frac{1}{k}r}$ into $\mathbb{R}P^2$ by the covering map $p$, around each point $y_1,\dots,y_n$ of $\mathbb{R}P^2$ we obtain  $k((2n-1)(2n-2))$ new distinct unordered points. In total, we have constructed $kn((2n-1)(2n-2))$ new distinct unordered points, that depend continuously on the element $\{y_1,\dots,y_n\}\in UF_n(\mathbb{R}P^2)$. This yields a cross-section $s:UF_n(\mathbb{R}P^2)\to UF_{n,m}(\mathbb{R}P^2)$ of the fibration $q: UF_{n,m}(\mathbb{R}P^2)\to UF_n(\mathbb{R}P^2)$, for $m=kn(2n-1)(2n-2)$, where $k\geq 1$. 
\end{proof}

We now present another construction of a cross-section, which covers smaller values of $m$.

\begin{proposition}\label{constraction2}
	Let $n\geq 3$ and $m>1$. For $m=2n(n-1)$ the fibration $$q:UF_{n,m}(\mathbb{R}P^2)\to UF_n(\mathbb{R}P^2)$$ admits a cross-section.
\end{proposition}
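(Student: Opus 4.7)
The plan is to mimic the construction in the proof of Proposition~\ref{construction1}, replacing the rational map $R_{x_i}$ of degree $(2n-1)(2n-2)$ by a rational map of degree $2(n-1)$. Specifically, for $\{y_1,\dots,y_n\} \in UF_n(\mathbb{R}P^2)$, lift through the two-sheeted cover $p: \mathbb{S}^2 \to \mathbb{R}P^2$ to obtain $\{x_1,\dots,x_{2n}\} \in A_n(\mathbb{S}^2)$, consisting of $n$ antipodal pairs on $\mathbb{S}^2$. For each $i \in \{1,\dots,2n\}$, let $i'$ denote the index such that $x_{i'} = -x_i$, and define
\begin{equation*}
R_{x_i}(x) := \prod_{j \in \{1,\dots,2n\} \setminus \{i, i'\}} M_{x_i,\, x_j,\, -x_i}(x),
\end{equation*}
where $M_{x_p,x_q,x_r}$ is the M\"obius transformation sending $(x_p,x_q,x_r)$ to $(0,1,\infty)$. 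Each factor contributes a simple zero at $x_i$ and a simple pole at $-x_i$, so $R_{x_i}$ is a rational map of degree $2n-2$, with $x_i$ as its unique zero.

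The next step is to verify the antipodal symmetry $R_{x_i}(x) = R_{-x_i}(-x)$. Using the identity $M_{x_p,x_q,x_r}(x) = M_{-x_p,-x_q,-x_r}(-x)$ observed in the previous proposition, together with the fact that the index set $\{1,\dots,2n\} \setminus \{i,i'\}$ is invariant under the involution $j \mapsto j'$, one reindexes the product defining $R_{-x_i}(-x)$ to match it factor by factor with $R_{x_i}(x)$. Choosing the third M\"obius argument to be $-x_i$ rather than some other $x_k$ is precisely what simultaneously collapses the degree from $(2n-1)(2n-2)$ down to $2(n-1)$ and preserves this antipodal symmetry.

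With these two ingredients in hand, the remainder of the argument parallels Proposition~\ref{construction1}. Setting $\mu := \tfrac{1}{3}\min_{i \neq j} d(x_i, x_j)$, one picks a regular value $r > 0$, depending continuously on the configuration, small enough that $R_{x_i}^{-1}(r) \subset \bar{D}(x_i, \mu)$ for every $i$. Then $S_r := \bigcup_i R_{x_i}^{-1}(r)$ consists of $2n(2n-2) = 4n(n-1)$ pairwise distinct points of $\mathbb{S}^2$, organised into $2n(n-1)$ antipodal pairs by the symmetry $R_{x_i}^{-1}(r) = -R_{-x_i}^{-1}(r)$. Projecting $S_r$ via $p$ yields $2n(n-1)$ new unordered points of $\mathbb{R}P^2$, all distinct from the $y_i$ and depending continuously on $\{y_1,\dots,y_n\}$. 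This assignment is the required cross-section for $m = 2n(n-1)$.

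The heart of the proof, and the main obstacle, is finding the correct family of rational maps: one needs degree exactly $2(n-1)$, with $x_i$ as the unique zero, and subject to the antipodal-equivariance $R_{x_i}(x) = R_{-x_i}(-x)$ so that the construction descends to $\mathbb{R}P^2$. The particular product above is the natural candidate once one recognises that pairing every $x_j$ with the antipodal basepoint $-x_i$ (instead of a second floating $x_k$) enforces both properties at once; verifying this symmetry is the only delicate point, after which the continuity and disjointness arguments are transcribed verbatim from Proposition~\ref{construction1}.
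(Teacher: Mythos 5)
Your proof is correct, but it follows a genuinely different route from the paper's. The paper proves Proposition \ref{constraction2} by a purely geometric shrinking argument: for each lifted point $x_i$ it contracts the cap $\{z\in\mathbb{S}^2 \ |\ d(x_i,z)\le M_{x_i}\}$ onto the small disc $\bar{D}(x_i,\mu)$ along the longitudes through $x_i$, carrying the $2n-2$ points of $\{x_1,\dots,x_{2n}\}\setminus\{x_i,-x_i\}$ to $2n-2$ new points near $x_i$; performing the contraction compatibly at $x_i$ and $-x_i$ yields the antipodal symmetry needed to descend to $\mathbb{R}P^2$. You instead rerun the M\"{o}bius-transformation scheme of Proposition \ref{construction1} with the degree-$(2n-2)$ map $R_{x_i}=\prod_{j\ne i,i'}M_{x_i,x_j,-x_i}$, and your two key checks go through: each factor has its zero at $x_i$ and its pole at $-x_i$, and the identity $M_{x_p,x_q,x_r}(x)=M_{-x_p,-x_q,-x_r}(-x)$ combined with the invariance of $\{1,\dots,2n\}\setminus\{i,i'\}$ under $j\mapsto j'$ gives $R_{x_i}(x)=R_{-x_i}(-x)$, so the $2n(2n-2)$ new points on $\mathbb{S}^2$ form antipodal pairs and project to $2n(n-1)$ points of $\mathbb{R}P^2$, as required. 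Your variant is in one respect cleaner than Proposition \ref{construction1}: all factors share the same zero and pole, so $R_{x_i}=c_i\bigl(\frac{x-x_i}{x+x_i}\bigr)^{2n-2}$ with $c_i=\prod_{j}\frac{x_j+x_i}{x_j-x_i}$, its only critical values are $0$ and $\infty$, and every $r\ne 0,\infty$ is automatically regular with exactly $2n-2$ preimages (the other points $x_j$ enter only through the constant $c_i$). What the paper's approach buys is elementariness --- no rational maps at all --- and it treats the continuous dependence of the auxiliary choices on the unordered configuration just as informally as you do, so nothing is lost on that front.
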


\begin{proof}
	Let $n\geq 3$. As in the proof of Proposition \ref{construction1}, we consider the two-sheeted covering map map $p:\mathbb{S}^2\to\mathbb{R}P^2$. Let $\{y_1,\dots,y_n\}$ be an element of $UF_n(\mathbb{R}P^2)$. Using $p$, $\bigcup\limits_{i=1}^np^{-1}(\{y_i\})=\{x_1,x_2,\dots,x_{2n}\}$, which is an element in $A_n(\mathbb{S}^2)$, where by $A_n(\mathbb{S}^2)$ we denote the set of $n$ pairs of antipodal points, which is a subset of $UF_{2n}(\mathbb{S}^2)$. So, $\{x_1,x_2,\dots,x_{2n}\}$ consists of $n$ pairs of antipodal points of $S^2$. We denote the distance function on the sphere based on the central angle between two points by $d:\mathbb{S}^2\times\mathbb{S}^2\to\mathbb{R}^2$. Clearly, we have $0\leq d(x_i,x_j)\leq \pi$, with $d(x_i,x_i)=0$ and $d(x_i,-x_i)=\pi$, for any $x_i, x_j\in \mathbb{S}^2$. We define the following positive numbers $m$ and $M_{x_i}$:
	$$m:=\frac{1}{3}\min_{i\neq j}d(x_i,x_j)\ \text{and}\ M_{x_i}:=\max_{x_i\neq -x_j}d(x_i,x_j)<\pi,\  \text{for every}\ 1\leq i\neq j\leq 2n.$$
	
	We consider the closed discs $\bar{D}(x_i,m)=\{y\in \mathbb{S}^2\ |\ d(x_i, y)\leq m \}$, for all $1\leq i\leq 2n$. By the definition of $m$, these discs are pairwise disjoint. Our aim is to construct $2n-2$ new distinct unordered points inside each closed disc $\bar{D}(x_i,m)$. Without loss of generality, we will describe this construction for the point $x_1$. The construction will be similar for the other points. We want to move all of the points of the set $\{x_1,\dots x_{2n}\}\setminus\{-x_1\}$ to pairwise distinct points inside the closed disc $\bar{D}(x_1,m)$. 
	The distance from $x_1$ to any point of the set $\{x_1,\dots x_{2n}\}\setminus\{-x_1\}$ is at most $M_{x_1}$, and so $\{x_1,\dots x_{2n}\}\setminus\{-x_1\}\subset P_1$, 
	where $P_1=\{z\in\mathbb{S}^2\ |\ d(x_1, z)\leq M_{x_1} \}$. Let $C_{x_1}=\{z\in\mathbb{S}^2\ |\ d(x_1, z)=M_{x_1} \}$. Moving now this circle $C_{x_1}$ and shrinking the space $P_1$ toward the point $x_1$, along the longitudes with respect to the axis that passes through the centre of the sphere and $x_1$, we move the points of the set $\{x_1,\dots,x_{2n}\}\setminus\{-x_1\}$ to pairwise distinct points inside the closed disc $\bar{D}(x_1,m)$. To do so, we shrink the circle $C_{x_1}$ until it coincides with the boundary of the closed disc $\bar{D}(x_1,m)$. In other words, at the end of the process, the angle $M_{x_1}$ becomes $m$. In this way, we have constructed $2n-2$ new distinct unordered points close to the point $x_1$.	
	We apply the same method for every point of $\{x_2,\dots,x_{2n}\}$ . We thus obtain $2n(2n-2)$ new distinct unordered points on the sphere. Note that these $2n(2n-2)$ new points are distinct, since we constructed $(2n-2)$ distinct points inside $2n$ pairwise disjoint closed discs. Moreover, choosing the same shrinking process for $x_i$ and $-x_i$, the $(2n-2)$ new points that we obtain inside the closed disc $\bar{D}(x_i,m)$ are the antipodal points of the $(2n-2)$ new points that we obtain inside the closed disc $\bar{D}(-x_i,m)$. Thus, by projecting these $2n(2n-2)$ new distinct points into $\mathbb{R}P^2$, via the covering map $p$, we obtain $2n(n-1)$ new distinct unordered points. These points depend continuously on the element $\{y_1,\dots,y_n\}\in UF_n(\mathbb{R}P^2) $. We conclude that there exists a cross-section $s:UF_n(\mathbb{R}P^2) \to UF_{n,m}(\mathbb{R}P^2)$ for the fibration $q: UF_{n,m}(\mathbb{R}P^2)/(S_n\times S_m)\to UF_n(\mathbb{R}P^2)$, for $m=2n(n-1)$. 	
\end{proof}	

\begin{proof}[Proof of Theorem \ref{th3}]
The result follows from Propositions \ref{construction1} and \ref{constraction2}.
\end{proof}

Applying the same argument as in [\cite{gonccalves2004braid}, Proposition 7], the fibration $q: UF_{n,m}(\mathbb{R}P^2)\to UF_n(\mathbb{R}P^2)$ admits a cross-section if and only if the short exact sequence \eqref{ms} splits. Thus, we obtain the following corollary:
\begin{corollary}\label{imply}
Let $n\geq 3$. The short exact sequence \eqref{ms} splits for $m=2n(n-1)$ and $m=kn(2n-1)(2n-2)$, where $k\geq1$.
\end{corollary}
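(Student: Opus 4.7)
The plan is to prove Corollary~\ref{imply} as an immediate consequence of Theorem~\ref{th3}, together with the standard equivalence between the existence of a continuous cross-section of $q: UF_{n,m}(\mathbb{R}P^2)\to UF_n(\mathbb{R}P^2)$ and the splitting of the induced short exact sequence~\eqref{ms}. This equivalence is precisely the content of [\cite{gonccalves2004braid}, Proposition~7], so I would cite it rather than reprove it; the key observation being used is that both $UF_n(\mathbb{R}P^2)$ and $UF_{n,m}(\mathbb{R}P^2)$ are aspherical (Eilenberg--MacLane spaces) for $n\geq 2$, so that the classifying-space formalism lets one promote any algebraic section of $\bar{q}$, up to homotopy, to a genuine continuous section of $q$.

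One direction is in any case transparent: any cross-section $s:UF_n(\mathbb{R}P^2)\to UF_{n,m}(\mathbb{R}P^2)$ of $q$ induces, after a choice of basepoints, a homomorphism $\bar{s}=s_\ast$ on fundamental groups, and $q\circ s=\mathrm{id}$ forces $\bar{q}\circ\bar{s}=\mathrm{id}$, so~\eqref{ms} splits. The converse is the substantive half, and is where asphericity is used. With this two-way equivalence in place, the corollary is immediate from Theorem~\ref{th3}: Propositions~\ref{construction1} and~\ref{constraction2} provide explicit geometric cross-sections for $m=kn(2n-1)(2n-2)$, $k\geq 1$, and for $m=2n(n-1)$, respectively, and the equivalence transports each of these into an algebraic splitting of~\eqref{ms}.

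As a consistency check, I would verify that these values of $m$ obey the necessary condition of Theorem~\ref{th2}, namely $m\equiv 0$ or $(n-1)^2 \pmod{n(n-1)}$. Indeed $2n(n-1)\equiv 0\pmod{n(n-1)}$ trivially, and $kn(2n-1)(2n-2)=2k(2n-1)\cdot n(n-1)\equiv 0\pmod{n(n-1)}$ as well, so both families fall in the residue class $0$ and are compatible with Theorem~\ref{th2}. Since no further work is needed beyond invoking Theorem~\ref{th3} and the cited equivalence, the only possible obstacle is purely expository, namely citing [\cite{gonccalves2004braid}, Proposition~7] in the right form for our non-pure, non-ordered configuration spaces; but that result is stated in enough generality (for unordered configuration spaces of aspherical surfaces) to apply verbatim here.
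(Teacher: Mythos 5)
Your proposal is correct and follows exactly the paper's route: the corollary is obtained by combining the geometric cross-sections of Theorem \ref{th3} (Propositions \ref{construction1} and \ref{constraction2}) with the equivalence from [\cite{gonccalves2004braid}, Proposition 7] between the existence of a cross-section of $q$ and the splitting of \eqref{ms}. Your consistency check against Theorem \ref{th2} is also accurate and matches the remark made in the paper's introduction.
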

\begin{remark}
For $n=3$, based on Theorem \ref{th2} and Corollary \ref{imply}, the smallest value of $m$, for which the homomorphism $\bar{q}_{3+m,3}:B_{3,m}(\mathbb{R}P^2)\to B_3(\mathbb{R}P^2)$ could possibly admit a section is $m=6$, and moreover, $m=12$ is the smallest known value for which the homomorphism $\bar{q}_{3+m,3}:B_{3,m}(\mathbb{R}P^2)\to B_3(\mathbb{R}P^2)$ admits a section.
\end{remark}

\begin{remark}
	To the best of our knowledge the remaining cases that are not covered by Theorems \ref{th2} and \ref{th3} are open.
\end{remark}
	
\section*{Acknowledgement}	
The author is thankful to John Guaschi for reading through this work and for his very valuable comments.

	\bibliographystyle{plain}
	\bibliography{bibliography.bib}
	
\end{document}